\documentclass[a4paper,11pt]{amsart}
\usepackage[dvipsnames]{xcolor}
\usepackage{amsthm,amsmath,amsfonts,amssymb,tikz,tikz-cd,geometry,enumitem,graphicx,pst-node,bbm,pgfplots,subfigure,hyperref,mathtools}

\mathtoolsset{showonlyrefs}

\graphicspath{{./Figures/}}
 \geometry{
 	a4paper,
 	left=2.5cm,
 	top=2.5cm,
 	right=2.5cm,
	bottom=2.5cm
 	}

\graphicspath{ {./Figures/} }

\newtheorem{thm}{Theorem}[section]
\newtheorem*{thm*}{Theorem}

\newtheorem*{cor*}{Corollary}
\newtheorem{lem}[thm]{Lemma}

\newtheorem{prop}[thm]{Proposition}

\newcommand*{\defeq}{\mathrel{\vcenter{\baselineskip0.5ex \lineskiplimit0pt\hbox{\scriptsize.}\hbox{\scriptsize.}}}=}

% \makeatletter
% \DeclareRobustCommand{\lnsub}{\mathrel{\text{$\m@th\proper@ideal$}}}
% \newcommand{\proper@ideal}{%
%   \ooalign{$\lneq$\cr\raise.22ex\hbox{$\lhd$}\cr}%
% }
% \makeatother

\newcommand{\q}{\quad\quad}

\let\originalleft\left
\let\originalright\right
\renewcommand{\left}{\mathopen{}\mathclose\bgroup\originalleft}
\renewcommand{\right}{\aftergroup\egroup\originalright}

\renewcommand{\b}{\underbar}
\renewcommand{\d}{\underdot}

\newcommand{\en}{\textendash\ }

\let\temp\phi
\let\phi\varphi
\let\varphi\temp

\newcommand{\cB}{\mathcal{B}}

\newcommand{\cD}{\mathcal{D}}

\newcommand{\cF}{\mathcal{F}}

\newcommand{\cH}{\mathcal{H}}
\newcommand{\cI}{\mathcal{I}}
\newcommand{\cJ}{\mathcal{J}}

\newcommand{\cN}{\mathcal{N}}

\newcommand{\cP}{\mathcal{P}}

\newcommand{\cS}{\mathcal{S}}
\newcommand{\cT}{\mathcal{T}}
\newcommand{\cU}{\mathcal{U}}
\newcommand{\cV}{\mathcal{V}}

\newcommand{\N}{\mathbb{N}}

\renewcommand{\a}{\alpha}
\renewcommand{\b}{\beta}

\renewcommand{\d}{\delta}

\newcommand{\e}{\varepsilon}

\renewcommand{\k}{\kappa}

\newcommand{\s}{\sigma}

\newcommand{\vs}{\varsigma}
\renewcommand{\th}{\theta}
\newcommand{\Th}{\Theta}
\newcommand{\w}{\omega}
\newcommand{\W}{\Omega}

\newcommand{\fn}{\mathfrak n}

\setlength{\parindent}{0cm}
\setenumerate{label={\normalfont(\roman*)}}

\relpenalty=9999
\binoppenalty=9999

\pgfplotsset{compat=1.17}

\usepackage[dvipsnames]{xcolor}
    \definecolor{awesome}{rgb}{1.0, 0.13, 0.32}

\numberwithin{equation}{section}

\begin{document}
\title[Dimension of Besicovitch-Eggleston sets for non-autonomous systems]{Dimension of Besicovitch-Eggleston sets for non-autonomous systems with countable symbolic dynamics}

\date{}
\author[Jonny Imbierski] {Jonny Imbierski$^\dagger$}
\author[Charlene Kalle]{Charlene Kalle$^\ddagger$}

\address[$\dagger$]{Mathematisch Instituut, Leiden University, Einsteinweg 55, 2333CC Leiden, The Netherlands}
\email[Jonny Imbierski]{imbierskijf@math.leidenuniv.nl}
\address[$\ddagger$]{Mathematisch Instituut, Leiden University, Einsteinweg 55, 2333CC Leiden, The Netherlands}
\email[Charlene Kalle]{kallecccj@math.leidenuniv.nl}

\subjclass[2020]{11K55, 37H99, 11A67}
\keywords{Digit frequencies, level sets, Hausdorff dimension, L\"uroth expansions, GLS expansions, infinite iterated function systems}

%\tableofcontents
\begin{abstract}
    \noindent
    In this article\footnote{2020 Mathematics Subject Classification: 11K55, 37H99\\ Version: \today} 
    we derive a formula for the Hausdorff dimension of Besicovitch-Eggleston level sets associated with non-autonomous dynamics constructed from families of countable affine iterated function systems. The formula obtained shows that the universal-lower-bound phenomenon present in the autonomous case studied in \cite{FLMW10} persists in this non-autonomous setting.
\end{abstract}

\maketitle

\section{Introduction}

A countable iterated function system (IFS) on a compact, connected subset $X\subseteq \mathbb R^D$, $D \ge 1$, is a collection $\{ f_k: X \to X \}_{k \in I}$ of uniformly contracting maps with $I$ a countable index set. %While finite IFSs, with finite index set $I$, have been extensively studied, the study of infinite IFSs, with countably infinite index set $I$, started with the work of Mauldin and Williams \cite{MW86}, see also \cite{Mau95,MU96} (Check falconer 86 random). 
Let $\pi: I^\mathbb N \to X$ be the projection given by
\[ \pi((\omega_n)_{n \in \mathbb N}) = \lim_{n \to \infty} f_{\omega_1} \circ \cdots \circ f_{\omega_n}(X).\]
The {\em limit set} of the IFS is the set $\Lambda = \pi(I^\mathbb N)$. If we use $\sigma: I^\mathbb N \to \mathbb N$ to denote the left shift map, then for a H\"older continuous potential $\Phi: I^{\mathbb N} \to \mathbb R^m$, $m \ge 1$, and $\alpha \in \mathbb R^m$ the set
\[ F_{\Phi}(\alpha)= \pi \left( \left\{\omega \in I^\mathbb N \, : \, \lim_{n \to \infty} \frac1n \sum_{k=0}^{n-1} \Phi(\sigma^k(\omega))=\alpha \right\} \right) \]
is the {\em $\alpha$-level set} for the Birkhoff average of $\Phi$. 

\medskip
Much is known about the multifractal properties of Birkhoff average level sets for finite IFSs, i.e.~with a finite index set $I$. %For infinite IFSs, i.e.~with infinite index set $I$, a multifractal analysis has been given in several settings. In \cite{JK11} the Hausdorff dimension of level sets is considered in case the maps $f_k$ are conformal and the normalisation factor of $\frac1n$ in the Birkhoff averages is replaced by the logarithm of the operator norm of the derivative. In \cite{Ree11} Reeve considered non-conformal infinite IFSs in $\mathbb R^2$ that are of Lalley-Gatzouras-type, i.e., they are of a skew-product form with each $f_k$ an affine contraction and with an additional assumption on the contraction ratios. For such systems he established a conditional variational principle for the level sets of Birkhoff averages. In \cite{KR14} the authors developed a thermodynamic formalism for quasi-multiplicative potentials on $I^\mathbb N$, thus establishing a formula for the Hausdorff dimension of level sets for Birkhoff averages of infinite IFSs consisting of affine maps with typical translation vectors.
%\medskip
The case of infinite $I$ is a bit less developed in general, but there are many results for IFSs on $\mathbb R$ that relate to number expansions with infinite digit sets, see e.g.~\cite{BSS02a,KS07,FLM10,FLMW10,Hof10,KMS12,FJLR15,IJ15,GL16,Rush}. One potential that is of particular interest in this setting is $\mathbbm 1: I^\mathbb N \to \mathbb R^{\# I}$ given by $\mathbbm 1 = (\mathbbm 1_{[k]})_{k \in I}$, where $[k] = \{ (\omega_n)_{n \in \mathbb N} \in I^\mathbb N \, : \, \omega_1=k\}$, as this potential captures digit frequencies.
The level sets for the Birkhoff averages corresponding to this potential are also called {\em Besicovitch-Eggleston sets}, referring to the pioneering work of Besicovitch \cite{Bes35} and Eggleston \cite{Egg49} on level sets for digit frequencies for expansions in integer base $N \in \mathbb N_{\ge 2}$.

\medskip
{\em L\"uroth expansions}, which were first introduced by L\"uroth in \cite{Lur83}, are examples of number expansions with infinite digit sets. For irrational numbers $x \in [0,1]$ they are expressions of the form
\[ x = \sum_{n \in \mathbb N} \frac{b_n}{\prod_{k=1}^n b_k(b_k+1)},\]
where $b_n \in \mathbb N$ for each $n \in  \mathbb N$. The sequence $(b_n)_{n \in \mathbb N}$ is called the {\em digit sequence} of $x$. The {\em digit frequency} of the digit $b \in \mathbb N$ in the L\"uroth expansion of $x$ is then
\[ \tau_{L,b}(x)=\lim_{n \to \infty} \frac{\{1 \le k \le n \, : \, b_k=b\}}{n}.\]
Now consider the IFS $\mathcal T_L=\{ f_k:[0,1]\to [0,1] \}_{k \in \mathbb N}$ with $f_k(x)= \frac{x+k}{k(k+1)}$, see Figure~\ref{GLSs}(c). Any sequence $(\omega_n)_{n \in \mathbb N} \in \mathbb N^\mathbb N$ corresponds to a L\"uroth expansion by
\[ \lim_{n \to \infty} f_{\omega_1} \circ \cdots \circ f_{\omega_n}([0,1]) = \sum_{n \ge 1} \frac{\omega_n}{\prod_{k=1}^n \omega_k(\omega_k+1)}.\]
If we let $\alpha = (\alpha_k)_{k \in \mathbb N}$ be a probability vector, i.e.~a vector such that $\alpha_k \ge 0$ and $\sum_{k \in \mathbb N} \alpha_k =1$ (which we call a {\em frequency vector} whenever it refers to digit frequencies), then the set $F_{\mathbbm 1} (\alpha)$ contains those points $x$ for which $\tau_{L,k}(x) = \alpha_k$ for each $k \in \mathbb N$. The result from \cite[Theorem 1.1]{FLMW10} implies the following.

\begin{thm}[\cite{FLMW10}]\label{FLMWThm}
Let $\a=(\a_k)_{k\in\N}$ be a frequency vector. Then the Hausdorff dimension of $F_{\mathcal T_L,\mathbbm 1}(\a)$ is
    \[
        \dim_H F_{\mathcal T_L, \mathbbm 1}(\a)=\max\left\{\frac12,\,\liminf_{m\to+\infty}\frac{-\sum_{1\leq k\leq m}\a_k\log\a_k}{\sum_{1\leq k\leq m}\a_k\log (k(k+1))}\right\}.
    \]
\end{thm}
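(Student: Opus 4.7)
The plan is to establish matching upper and lower bounds for $\dim_H F_{\cT_L,\1}(\a)$. Write
\[
L(\a)\defeq\liminf_{m\to\infty}\frac{-\sum_{k\le m}\a_k\log\a_k}{\sum_{k\le m}\a_k\log(k(k+1))}
\]
for the $\liminf$ in the statement. The number $1/2$ is the abscissa of convergence of $\sum_{k\in\N}(k(k+1))^{-s}$ -- equivalently the Mauldin--Urbanski parameter of $\cT_L$ -- and it quantifies the dimensional ``slack'' carried by sequences whose digits range freely over arbitrarily large integers. I would prove $\dim_H F_{\cT_L,\1}(\a)\le\max\{1/2,L(\a)\}$ and $\dim_H F_{\cT_L,\1}(\a)\ge\max\{1/2,L(\a)\}$ separately.

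For the upper bound, fix $m\in\N$ and small $\delta>0$ and cover $F_{\cT_L,\1}(\a)$ by depth-$n$ cylinders $\pi([i_1,\dots,i_n])$ in which each digit $k\le m$ occurs $n_k$ times with $|n_k-n\a_k|<\delta n$, while digits larger than $m$ are unconstrained. A Stirling--multinomial count gives $\exp\bigl(n(-\sum_{k\le m}\a_k\log\a_k+o(1))\bigr)$ such cylinders, each of diameter $\prod_{k\le m}(k(k+1))^{-n_k}\cdot\prod_{j:\,i_j>m}(i_j(i_j+1))^{-1}$. Summing the $s$-th powers of the diameters over the free positions produces a tail factor $\bigl(\sum_{k>m}(k(k+1))^{-s}\bigr)^{n-\sum_{k\le m}n_k}$, which is finite precisely when $s>1/2$ and, for such $s$, tends to $0$ as $m\to\infty$. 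Matching the entropy contribution against the length factor and the tail pressure, I obtain convergence of the Hausdorff $s$-sum whenever $s>\max\{1/2,L_m(\a)\}$, where $L_m(\a)$ denotes the $m$-th term in the $\liminf$ defining $L(\a)$. Sending $\delta\to 0$, $n\to\infty$, and finally taking $\liminf$ in $m$ yields the upper bound.

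For the lower bound on $L(\a)$, for each large $m$ I would choose a probability vector $\a^{(m)}$ on $\N$ whose coordinates approximate $\a_k$ for $k\le m$ and whose residual mass $1-\sum_{k\le m}\a_k$ is redistributed in a controlled way over larger digits. Let $\mu_m$ be the pushforward under $\pi$ of the Bernoulli product measure on $\N^\N$ with weights $\a^{(m)}$. By the strong law of large numbers $\mu_m$ concentrates on sequences with digit frequency $\a^{(m)}$, and a Billingsley-type local dimension computation, combining Shannon--McMillan--Breiman with the Birkhoff ergodic theorem applied to $\log(k(k+1))$, gives
\[
\lim_{r\to 0}\frac{\log\mu_m(B(x,r))}{\log r}=\frac{-\sum_k\a^{(m)}_k\log\a^{(m)}_k}{\sum_k\a^{(m)}_k\log(k(k+1))}\qquad\mu_m\text{-a.e.}
\]
Choosing $\a^{(m)}$ so this ratio tends to $L(\a)$ and so that $\mu_m$ ultimately charges $F_{\cT_L,\1}(\a)$ gives $\dim_H F_{\cT_L,\1}(\a)\ge L(\a)$. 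The universal $1/2$ bound I obtain by a separate construction: on a density-$\eta$ subset of positions one freely places digits drawn from a sparse ``dimension-$1/2$'' sampling set $\{N_j\}_{j\in\N}$, while on the complementary positions one realises the frequencies $\a_k$. A direct pressure computation then shows that the resulting Cantor-like subset of $F_{\cT_L,\1}(\a)$ has dimension at least $1/2-o_\eta(1)$.

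The central obstacle is that the natural support of $\mu_m$ is the wrong level set $F_{\cT_L,\1}(\a^{(m)})$ rather than $F_{\cT_L,\1}(\a)$. I would address this with a block construction: partition the positional indices into blocks of rapidly growing length and, on the $j$-th block, impose Bernoulli statistics with weights $\a^{(m_j)}$, where $m_j\to\infty$ at a rate chosen so the empirical frequencies of the full sequence converge to $\a$. Making this construction deliver both the $L(\a)$ and the $1/2$ lower bounds within a single subset of $F_{\cT_L,\1}(\a)$ requires a dimension-preserving concatenation argument, and this is the most delicate part of the proof.
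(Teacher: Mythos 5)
Your upper bound follows essentially the same route as the paper's Section~3 (adapted from \cite{FLMW10}): cover the level set by depth-$n$ cylinders with prescribed counts of the digits $\le m$ and unconstrained digits $>m$, count them by a Stirling/multinomial estimate, and absorb the free positions into the tail factor $\bigl(\sum_{k>m}(k(k+1))^{-s}\bigr)^{\kappa}$, which is finite precisely for $s>\frac12$ and small for large $m$. That part is sound as a sketch.

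The genuine gap is in your universal $\frac12$ lower bound. If the freely filled positions have positive density $\eta$ and the digits placed there escape to infinity (or run over a fixed sparse set), then each fixed digit $k$ has overall frequency $(1-\eta)\a_k$ along every sequence you construct: the free positions contribute zero frequency to any fixed digit, and the complementary positions have density $1-\eta$. Since $\sum_k\a_k=1$, this cannot be repaired by re-weighting the complementary positions (you would need relative frequencies $\a_k/(1-\eta)$ there, which sum to more than $1$). So for every fixed $\eta>0$ your Cantor-like set lies in the level set of the defective vector $(1-\eta)\a$ and is disjoint from $F_{\cT_L,\1}(\a)$; letting $\eta\to0$ does not transfer the bound, because each bound is proved for the wrong set. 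The construction has to use an \emph{infinite but zero-density} set of positions, with the inserted digits growing fast enough that they still dominate the length and measure asymptotics despite their sparsity. This is exactly what the paper does in Section~4 (following \cite{FLMW10}): the free positions are the $\lceil k^\gamma\rceil$-th admissible positions with $1<\gamma<2$, the digit at the $k$-th such position ranges over $(2^k-2^{\d k},2^k]$, the measure is uniform over these choices, and a Billingsley-type lemma for fundamental intervals yields local dimension at least $\frac{\d}{1+\e}\cdot\frac12$, after which $\d\to1$, $\e\to0$.

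Separately, for the $L(\a)$ lower bound you correctly identify that Bernoulli measures with truncated weights charge the wrong level set, but the block construction you propose to fix this is deferred as ``the most delicate part'', i.e.\ the bound is not actually established; this is where the $\liminf$ over truncations (rather than a ratio of possibly infinite entropy and Lyapunov sums) must be produced. The paper takes a different route worth comparing: it keeps one measure supported on $F_{\cT_L,\1}(\a)$ itself (in the L\"uroth case the pushed-forward $\a$-Bernoulli measure), approximates the infinite IFS by finite GLS systems $\cT^{(m)}$ obtained by collapsing the digits $>m$, shows the associated measures $\mu^{(m)}$ converge setwise and hence give $F_{\cT_L,\1}(\a)$ mass $>1-\d$, and computes pointwise dimensions only for the finite systems, where entropy and Lyapunov exponent are finite; combining with the characterisation \eqref{q:dimHmu} gives $\b_\cT(\a)$ in the limit. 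Your block scheme is plausible but would need the same care, and as written the proposal does not contain a complete proof of either lower bound.
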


One of the important observations made in \cite{FLMW10} is the lower bound of $\frac12$ in the formula from Theorem~\ref{FLMWThm}, which is independent of $\alpha$ and occurs due to the fact that the IFS $\mathcal T_L$ is infinite. This $\frac12$ is the \textit{exponent of convergence} of the sequence $(\frac1{k(k+1)})_{k\in\N}$ of contraction ratios of the maps $f_k$ in $\mathcal T_L$: $$\frac12=\inf\left\{t\geq0:\sum_{k\in\N}\left(\frac1{k(k+1)}\right)^t<+\infty\right\}.$$
In the second term in the formula from Theorem~\ref{FLMWThm}, we recognise the entropy and the Lyapunov exponent for the $\alpha$-Bernoulli measure on $\N^\mathbb N$.

\medskip
The main result from \cite{FLMW10} is actually stronger than stated in Theorem~\ref{FLMWThm} in that it holds for a much wider class of infinite IFSs on $\mathbb R$. In case all maps in such an infinite IFS are affine, then the IFS relates to Generalised L\"uroth Series (GLS) expansions, introduced in \cite{BBDK94}, in much the same way as the IFS $\mathcal T_L$ relates to L\"uroth expansions, and the results from \cite{FLMW10} give the Hausdorff dimension of the Besicovitch-Eggleston level sets for such GLS number expansions. See Figure~\ref{GLSs}(a) for a finite IFS that generates binary expansions, Figure~\ref{GLSs}(b) for a finite IFS generating GLS expansions with a finite digit set and Figure~\ref{GLSs}(d) for an infinite IFS generating GLS expansions with an infinite digit set.
\medskip

\begin{figure}[t]
    \centering
    \begin{subfigure}[Binary]{
        \begin{tikzpicture}[scale=3.2]
        \draw[dashed](0,0.5)--(1,0.5);
        \draw[ultra thick, NavyBlue!80](0,0)--(1,0.5)(0,0.5)--(1,1);
        \draw[ultra thick](0,0)node[below]{\small 0}--(1,0)node[below]{\small 1}--(1,1)--(0,1)node[left]{\small 1}--cycle;
        \draw                           (0,1/2)node[left]{\scriptsize $\frac12$};
        \end{tikzpicture}}
    \end{subfigure}
    \hspace{-0.5cm}
    \begin{subfigure}[Finite digit set GLS]{
        \begin{tikzpicture}[scale=3.2]
            \draw[dashed]                   (0,0.3183)--(1,0.3183)(0,5/6)--(1,5/6);
            \draw[ultra thick, NavyBlue!80]   (0,0)--(1,0.3183)(1,0.3183)--(0,5/6)(0,5/6)--(1,1);
            \draw[ultra thick]              (0,0)node[below]{\small 0}--(1,0)node[below]{\small 1}--(1,1)--(0,1)node[left]{\small 1}--
            cycle;
            \draw(0,0.3183)node[left]{\scriptsize $\frac1\pi$}
            (0,5/6)node[left]{\scriptsize $\frac56$};
        \end{tikzpicture}}
    \end{subfigure}
    \hspace{-0.5cm}
    \begin{subfigure}[L\"uroth]{
        \begin{tikzpicture}[scale=3.2]
            \draw[dashed](0,1/5)--(1,1/5)(0,1/4)--(1,1/4)(0,1/3)--(1,1/3)(0,1/2)--(1,1/2);
            \draw[ultra thick, NavyBlue!80](0,1/10)--(1,1/9)(0,1/9)--(1,1/8)(0,1/8)--(1,1/7)(0,1/7)--(1,1/6)(0,1/6)--(1,1/5)(0,1/5)--(1,1/4)(0,1/4)--(1,1/3)(0,1/3)--(1,1/2)(0,1/2)--(1,1);
            \filldraw[fill=NavyBlue!80, draw=NavyBlue!80] (0,0) rectangle (1,1/10);
            \draw[ultra thick](0,0)node[below]{\small 0}--(1,0)node[below]{\small 1}--(1,1)--(0,1)node[left]{\small 1}--cycle;
            \draw(0,1/3)node[left]{\scriptsize $\frac13$}(0,1/2)node[left]{\scriptsize $\frac12$}(-0.01,1/7)node[left]{\scriptsize $\vdots$};
        \end{tikzpicture}}
    \end{subfigure}
    \hspace{-0.5cm}
    \begin{subfigure}[Infinite digit set GLS]{
        \begin{tikzpicture}[scale=3.2]
            \draw[dashed](0,1/2-1/3)--(1,1/2-1/3)(0,1/2-1/4)--(1,1/2-1/4)(0,1/2-1/5)--(1,1/2-1/5)(0,1/2+1/5)--(1,1/2+1/5)(0,1/2+1/4)--(1,1/2+1/4)(0,1/2+1/3)--(1,1/2+1/3);
            \draw[ultra thick, NavyBlue!80] (1,0)--(0,1/2-1/3)(1,1/2-1/3)--(0,1/2-1/4)(1,1/2-1/4)--(0,1/2-1/5)(1,1/2-1/5)--(0,1/2-1/6)(1,1/2-1/6)--(0,1/2-1/7)(1,1/2-1/7)--(0,1/2-1/8)(1,1/2-1/8)--(0,1/2-1/9)(1,1/2-1/9)--(0,1/2-1/10)(0,1/2+1/10)--(1,1/2+1/9)(0,1/2+1/9)--(1,1/2+1/8)(0,1/2+1/8)--(1,1/2+1/7)(0,1/2+1/7)--(1,1/2+1/6)(0,1/2+1/6)--(1,1/2+1/5)(0,1/2+1/5)--(1,1/2+1/4)(0,1/2+1/4)--(1,1/2+1/3)(0,1/2+1/3)--(1,1);
            \fill[NavyBlue!80] (0,1/2-1/9) rectangle (1,1/2+1/9);
            \draw[ultra thick]  (0,0)node[below]{\small $0$}--(1,0)node[below]{\small $1$}--(1,1)--(0,1)node[left]{\small $1$}--cycle;
            \draw   (0,1/6)node[left]{\scriptsize $1/6$}
                    (0,1/4)node[left]{\scriptsize $1/4$}
                    %(3/10,0.01)node[below]{\scriptsize $\frac3{10}$}
                    (-0.01,0.55)node[left]{\scriptsize $\vdots$}
                    %(7/10,0.01)node[below]{\scriptsize $\frac7{10}$}
                    (0,3/4)node[left]{\scriptsize $3/4$}
                    (0,5/6)node[left]{\scriptsize $5/6$};
        \end{tikzpicture}}
    \end{subfigure}
    \caption{Examples of finite and infinite IFSs generating GLS expansions. Binary expansions (a) and L\"uroth expansions (c) are particular examples of GLS expansions.}
    \label{GLSs}
\end{figure}
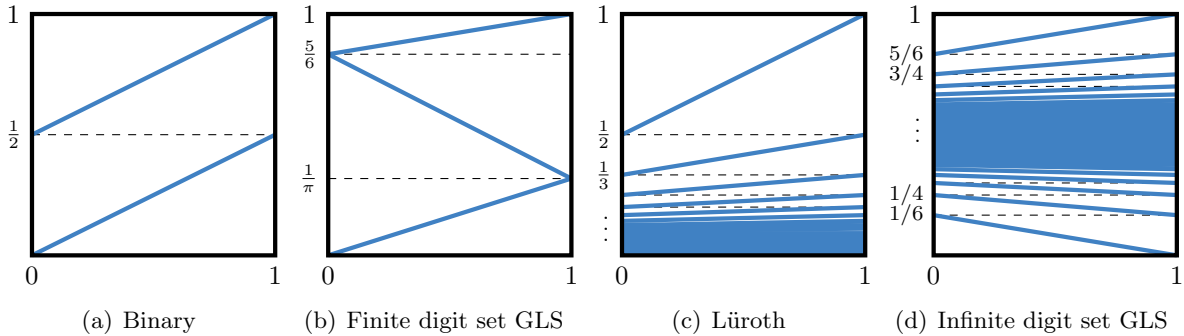

In this paper, we obtain a result similar to Theorem~\ref{FLMWThm} but for the Besicovitch-Eggleston level sets $F_{\cT,\w}(\a)$ of non-autonomous {\bf--} that is, time-dependent {\bf--} IFSs. Let $\mathcal S$ be a finite set and for each $s \in \mathcal S$ let $\mathcal T_s = \{ f_{s,b}:[0,1]\to [0,1]\}_{b \in \mathcal B_s}$ be an IFS with countable index set $\mathcal B_s$ that satisfies the Open Set Condition and for which each $f_{s,b}$ is an affine map such that $\sum_{b \in \mathcal B_s} |f_{s,b}([0,1])|=1$. So, each IFS from Figure~\ref{GLSs} satisfies these conditions. We call an IFS satisfying these conditions a {\em GLS IFS}, since it gives GLS expansions of numbers in $[0,1]$. For notational convenience, we let $\mathcal B_s =\{1,2, \ldots, B_s\}$ for some $B_s \in \mathbb N$ if $\# \mathcal B_s < \infty$ and $\mathcal B_s=\mathbb N$ otherwise.

\medskip
Let $\mathcal T = \{ \mathcal T_s\}_{s \in \mathcal S}$ and fix an $\omega = (\omega_n)_{n \in \mathbb N} \in \mathcal S^\mathbb N$. This $\omega$ determines the order in which the IFSs $\mathcal T_s$ from $\mathcal T$ are applied in the following way. Set $\mathcal B_{\omega}^\mathbb N = \{ (b_n)_{n \in \mathbb N} \, : \, b_k \in \mathcal B_{\omega_k}, \, k \in \mathbb N\}$ and let
\begin{equation}\label{q:piomega}
\pi_\omega: B_{\omega}^\mathbb N \to [0,1], \, (b_n)_{n \in \mathbb N} \mapsto  \lim_{n \to \infty} f_{\omega_1,b_1} \circ f_{\omega_2,b_2} \circ \cdots \circ f_{\omega_n,b_n}([0,1]).
\end{equation}
The conditions on $\mathcal T$ imply that $[0,1] \setminus \pi_\omega(\mathcal B_\omega^\mathbb N)$ is a countable set. For $s \in \mathcal S$ and $b \in \mathcal B_s$ write $N_{s,b}^{-1} = |f_{s,b}([0,1])|$ for the contraction ratio of $f_{s,b}$ and $a_{s,b}=f_{s,b}(0)N_{s,b}$, so that
\[ f_{s,b}(x) = \frac{a_{s,b}+(-1)^{\varepsilon_{s,b}}x}{N_{s,b}},\]
where $\varepsilon_{s,b}\in\{0,1\}$ determines the sign of the slope of $f_{s,b}$.
Then for each $x \in \pi_\omega (\mathcal B_\omega^\mathbb N)$ there is a sequence $(b_n)_{n \in \mathbb N} \in \mathcal B_\omega^\mathbb N$ such that
\begin{equation}\label{NGLSExp}
x = \pi_\omega((b_n)_{n \in \mathbb N}) = \sum_{n\in \mathbb N} \frac{(-1)^{\varepsilon_{\omega_1,b_1}+ \cdots +\varepsilon_{\omega_{n-1},b_{n-1}}}a_{\omega_n,b_n}}{\prod_{k=1}^n N_{\omega_k,b_k}}.
\end{equation}
We call such an expression a {\em Non-autonomous Generalised L\"uroth Series (NGLS) expansion} or $(\mathcal T, \omega)$-expansion of $x$. We refer to the pair $(\cT,\w)$ as a \textit{Non-autonomous Generalised L\"uroth Series (NGLS) system}.
From \eqref{NGLSExp}, we see that an NGLS expansion of $x$ is completely determined by the sequence $(\w_n,b_n)_{n\in\N}$. Therefore, we refer to $\cD\defeq\{(s,b):b\in\cB_s,s\in\cS\}$ as the \textit{digit set} of $(\cT,\w)$ and its elements as \textit{digits} of NGLS expansions.

\medskip
Our main results are on the Hausdorff dimension of the Besicovitch-Eggleston level sets for NGLS systems $(\cT,\w)$; therefore, for $\omega = (\omega_n)_{n \in \mathbb N} \in \mathcal S^\mathbb N$ and a frequency vector $\a=(\a_d)_{d\in\cD}$, set
\[ E_\omega (\alpha) \defeq \left\{ (b_n)_{n \in \mathbb N} \in \mathcal B^\mathbb N_\omega \, : \, \lim_{n \to+\infty} \frac{\# \{ 1\le k \le n : (\w_k, b_k) = d \}} n=\a_d\, \, \text{ for all } \, d\in\cD \right\} \]
for the symbolic level set and let the \textit{Besicovitch-Eggleston $\a$-level set for $(\cT,\w)$} be given by
\[F_{\cT,\w}(\a)\defeq \pi_\omega (E_\omega(\alpha)).\]
Set $\W_\cT(\a)\defeq\{\w\in\cS^\N:F_{\cT,\w}(\a)\neq\emptyset\}$.
Throughout the article, we assume the following non-degeneracy condition on $\a$:
\begin{equation}\label{NonDegenerate}\tag{$\dagger$}
    \textrm{For each }s\in\cS,\textrm{ there exists a }b\in\cB_s\textrm{ such that }\a_{(s,b)}>0.
\end{equation}
The first result is on the form of the set $\W_\cT(\a)$. % and is a generalisation of \cite[Proposition 3.3]{IKM23}.
For each $s\in\cS$, put $\a_s\defeq\sum_{b\in\cB_s}\a_{(s,b)}$.

\begin{thm}\label{Spectrum}
For any finite collection $\cT\defeq\{\mathcal T_s\}_{s\in\cS}$ of GLS IFSs and any frequency vector $\a=(\a_d)_{d\in\cD}$ satisfying \eqref{NonDegenerate}, it holds that
\[ \W_\cT(\a)=\left\{\w\in\cS^\N: \lim_{n \to+\infty}\frac{\# \{1 \le k \le n \, : \, \omega_k = s \}} n=\a_s\, \, \text{ for all }\,  s\in\cS\right\}.\]
In particular, $\Omega_{\mathcal T}(\alpha) \neq \emptyset$.
\end{thm}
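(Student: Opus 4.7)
The plan is to prove the two inclusions separately, linked by the elementary identity
\[ \#\{1 \le k \le n : \omega_k = s\} = \sum_{b \in \mathcal B_s} \#\{1 \le k \le n : (\omega_k,b_k) = (s,b)\}, \]
which is valid for every $(b_n)_n \in \mathcal B_\omega^{\mathbb N}$, every $s \in \mathcal S$ and every $n \in \mathbb N$.

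For the inclusion $\subseteq$, suppose $\omega \in \Omega_{\mathcal T}(\alpha)$ and pick $(b_n)_n \in E_\omega(\alpha)$. Dividing the identity above by $n$ and applying Fatou's lemma for counting measure on $\mathcal B_s$ (which is needed because $\mathcal B_s$ may be $\mathbb N$) gives
\[ \alpha_s = \sum_{b \in \mathcal B_s} \alpha_{(s,b)} \le \liminf_{n \to \infty} \frac{\#\{1 \le k \le n : \omega_k = s\}}{n} \]
for every $s \in \mathcal S$. Because $\mathcal S$ is finite and $\sum_{s \in \mathcal S} \alpha_s = 1 = \sum_{s \in \mathcal S} \#\{1 \le k \le n : \omega_k = s\}/n$, summing these inequalities forces them all to be equalities, and the complementary identity $1 - \#\{1 \le k \le n : \omega_k = s\}/n = \sum_{s' \neq s} \#\{1 \le k \le n : \omega_k = s'\}/n$ upgrades each liminf into a genuine limit equal to $\alpha_s$.

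For the reverse inclusion, fix $\omega$ with $\lim_n \#\{1 \le k \le n : \omega_k = s\}/n = \alpha_s$ for all $s \in \mathcal S$. Condition \eqref{NonDegenerate} forces $\alpha_s > 0$ for every $s$, so each set $K_s \defeq \{k \in \mathbb N : \omega_k = s\}$ has positive density. For each $s$, the vector $(\alpha_{(s,b)}/\alpha_s)_{b \in \mathcal B_s}$ is a probability vector on $\mathcal B_s$, so by the strong law of large numbers applied to the corresponding Bernoulli product measure on $\mathcal B_s^{\mathbb N}$ there exists $(c_j^{(s)})_j \in \mathcal B_s^{\mathbb N}$ with $\lim_j \#\{1 \le i \le j : c_i^{(s)} = b\}/j = \alpha_{(s,b)}/\alpha_s$ for every $b \in \mathcal B_s$. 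Interleave these sequences along $\omega$ by setting $b_n \defeq c^{(\omega_n)}_{j(n)}$, where $j(n)$ is the rank of $n$ inside $K_{\omega_n}$. Writing $j_n^{(s)} \defeq \#\{1 \le k \le n : \omega_k = s\}$, one directly factorises
\[ \frac{\#\{1 \le k \le n : (\omega_k, b_k) = (s,b)\}}{n} = \frac{j_n^{(s)}}{n} \cdot \frac{\#\{1 \le i \le j_n^{(s)} : c_i^{(s)} = b\}}{j_n^{(s)}} \longrightarrow \alpha_s \cdot \frac{\alpha_{(s,b)}}{\alpha_s} = \alpha_{(s,b)}, \]
using $j_n^{(s)} \to \infty$. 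Hence $(b_n)_n \in E_\omega(\alpha)$ and $\omega \in \Omega_{\mathcal T}(\alpha)$.

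The only delicate point is the Fatou bookkeeping when $\mathcal B_s = \mathbb N$: the sum and liminf can only be exchanged in one direction, and what closes the gap is the finiteness of $\mathcal S$ together with the normalisation $\sum_s \alpha_s = 1$, which turns the lower bound into an equality. The existence of the auxiliary sequences $(c^{(s)}_j)_j$ is a soft probabilistic statement, which could equally be replaced by an explicit greedy digit-assignment construction if one preferred a deterministic proof.
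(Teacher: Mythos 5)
Your proof is correct and follows essentially the same route as the paper: the forward inclusion by summing the digit-pair frequencies over $b\in\cB_s$, and the reverse inclusion by interleaving, along $\omega$, auxiliary sequences in $\cB_s^\N$ with conditional frequencies $\a_{(s,b)}/\a_s$ and factorising the counts exactly as in the paper. The only differences are cosmetic: you justify the interchange of limit and infinite sum via Fatou together with the normalisation $\sum_{s\in\cS}\a_s=1$ (a point the paper leaves implicit), and you obtain the auxiliary sequences from the strong law of large numbers instead of citing \cite{FLMW10}, \cite{Aafko} or \cite{IKM23}, both of which are perfectly adequate substitutes.
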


For each $s\in\cS$, define the \textit{exponent of convergence} of the system $\mathcal T_s$ by
\[ \eta(\mathcal T_s)\defeq\inf\left\{t\geq0:\sum_{b\in\cB_s}N_{s,b}^{-t}<+\infty\right\}\in[0,1],\]
and put $\eta_\cT\defeq\max\{\eta(\mathcal T_s):s\in\cS\}$. In addition, define the \textit{fibre dimension} with respect to $\cT$ and $\a$ by
\[ \b_\cT(\a)\defeq\liminf_{m\to+\infty}\frac{\sum_{s\in\cS}\a_s\log\a_s-\sum_{d\in\cD_m}\a_d\log\a_d}{\sum_{d\in\cD_m}\a_d\log N_d},\]
where $\cD_m\defeq\{(s,b)\in\cD:b\leq m\}$.
Let $\cS_\N\defeq\{s\in\cS:\cB_s=\N\}$ denote the collection of indices $s \in \mathcal S$ for which the IFS $\mathcal T_s$ is infinite.
\begin{thm}\label{dimH}
    Let $\cT\defeq\{\mathcal T_s\}_{s\in\cS}$ be a finite collection of GLS IFSs, $\a=(\a_d)_{d\in\cD}$ a frequency vector satisfying \eqref{NonDegenerate} and $\w\in\W_\cT(\a)$.
    Assume that the limit
    \begin{equation}\label{q:limexists}
        \lim_{n\to+\infty}\frac{\log n}{\log N_{s,n}}
    \end{equation}
    exists for all $s\in\cS_\N$.
    Then
    \begin{equation}\label{dimHFEq}
        \dim_HF_{\cT,\w}(\a)=\max\{\eta_\cT,\b_\cT(\a)\}.
    \end{equation}
    Furthermore, if $\sum_{d\in\cD}\a_d\log N_d=+\infty$, then $\dim_HF_{\cT,\w}(\a)=\eta_\cT$.
\end{thm}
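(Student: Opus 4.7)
The plan is a two-sided argument in the spirit of the autonomous case \cite{FLMW10}, adapted to the non-autonomous setting by conditioning on $\w\in\W_\cT(\a)$ throughout. For the lower bound $\dim_H F_{\cT,\w}(\a)\ge\b_\cT(\a)$, I would build the non-autonomous conditional-Bernoulli measure $\mu_\w$ on $\cB_\w^\N$ defined on cylinders by
\[
\mu_\w([b_1,\dots,b_n])=\prod_{k=1}^n\frac{\a_{(\w_k,b_k)}}{\a_{\w_k}}.
\]
The marginals are independent (but not identically distributed), and since $\w\in\W_\cT(\a)$ by hypothesis, the strong law for independent bounded variables together with countable intersection over $d\in\cD$ gives $\mu_\w(E_\w(\a))=1$. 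Pushing forward via $\pi_\w$, using that cylinder images are intervals of length $\prod_k N_{\w_k,b_k}^{-1}$, and applying the mass distribution principle with truncation to $\cD_m$ before taking $m\to+\infty$ along a $\liminf$-realising subsequence yields the ratio defining $\b_\cT(\a)$.

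For the lower bound $\dim_H F_{\cT,\w}(\a)\ge\eta_\cT$, pick $s^\star\in\cS_\N$ with $\eta(\mathcal T_{s^\star})=\eta_\cT$ (the case $\eta_\cT=0$ being vacuous). I would construct a Moran-type subset of $F_{\cT,\w}(\a)$ by fixing a sparse subsequence of indices $n_j$ with $\w_{n_j}=s^\star$ along which $b_{n_j}$ is allowed to range over $\{1,\dots,m_j\}$ with $m_j\to+\infty$ slowly, while at all other indices $b_n$ is prescribed so as to drive the digit statistics exactly to $\a$. Because the sparse subsequence has vanishing density, the frequency condition defining $E_\w(\a)$ is preserved. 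A standard Moran / box-counting computation on this subset, combined with the hypothesis \eqref{q:limexists} which yields $\lim_j\log m_j/\log N_{s^\star,m_j}=\eta(\mathcal T_{s^\star})$, then gives Hausdorff dimension $\eta_\cT$ for the subset. For the matching upper bound, fix $t>\max\{\eta_\cT,\b_\cT(\a)\}$ and an $m$ realising the $\liminf$ in $\b_\cT(\a)$. Stirling bounds the number of $n$-cylinders whose digit statistics are $\d$-close to $\a|_{\cD_m}$ by $\exp(n(H(m)+o_\d(1)))$ with $H(m)\defeq\sum_s\a_s\log\a_s-\sum_{d\in\cD_m}\a_d\log\a_d$, and each such cylinder has diameter $\approx\exp(-n\sum_{d\in\cD_m}\a_d\log N_d)$; that $t>\b_\cT(\a)$ makes this bulk contribution $t$-summable. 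Cylinders containing at least one digit $b>m$ are controlled by $\sum_{b>m}N_{s,b}^{-t}<+\infty$, which is finite because $t>\eta_\cT$. Together these give $\cH^t(F_{\cT,\w}(\a))<+\infty$.

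The main obstacle is that non-autonomy blocks any direct appeal to ergodic machinery such as Shannon--McMillan--Breiman or the Mauldin--Urba\'nski thermodynamic formalism for infinite IFSs, so entropy, Lyapunov, and covering estimates must all be derived by elementary but sufficiently uniform large-deviations arguments along the specific sequence $\w$; simultaneously, the infiniteness of the fibre index sets $\cB_s$ forces every computation to proceed through truncations $\cD_m$ whose error terms must be controlled uniformly in both $m$ and $n$. The regularity hypothesis \eqref{q:limexists} is exactly the ingredient needed to convert the branching multiplicity $m_j$ along the sparse Moran subsequence into the clean quantity $\eta_\cT$. The final assertion of the theorem follows from the main formula together with the comparison $\b_\cT(\a)\le\eta_\cT$ in the divergent case: the summability $\sum_b\a_{(s,b)}<+\infty$ combined with the definition of the exponent of convergence forces $\a_{(s,b)}<N_{s,b}^{-t}$ eventually for every $t<\eta(\mathcal T_s)$, and a Stolz--Ces\`aro / regular-variation argument on the incremental contributions to $H(m)$ and $\sum_{d\in\cD_m}\a_d\log N_d$ then shows $\b_\cT(\a)\le\eta_\cT$ when the latter sum diverges.
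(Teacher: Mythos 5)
Your overall architecture coincides with the paper's: an upper bound by covering with fibre fundamental intervals, splitting each word into its $\cD_m$-part and its large-digit part and estimating the count by Stirling; a lower bound $\eta_\cT$ from a sparse-branching Moran-type subset of $E_\w(\a)$; a lower bound $\b_\cT(\a)$ from a product-type measure with cylinder masses $\prod_\ell \a_{(\w_\ell,b_\ell)}/\a_{\w_\ell}$; and the divergent case $\sum_d\a_d\log N_d=+\infty$ reduced to the inequality $\b_\cT(\a)\le\eta_\cT$. However, two of your steps have genuine gaps. The most serious is the $\eta_\cT$ lower bound. Letting the digit at the sparse indices $n_j$ range over $\{1,\dots,m_j\}$ with $m_j\to+\infty$ \emph{slowly} cannot give dimension $\eta_\cT$: every non-branching (filler) step contributes at least $\log\min_s N_{s,1}>0$ to the log-contraction, so the cumulative length exponent up to level $n$ grows at least linearly in $n$, while your cumulative branching entropy $\sum_{n_j\le n}\log m_j$ is $o(n)$ when the $n_j$ are sparse and $m_j$ grows slowly; a Moran or mass-distribution computation then gives dimension $0$, not $\eta_\cT$. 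One must make the branching cardinality grow \emph{fast}: the paper takes branching indices $\th(k)\approx k^\gamma/\tau_\vs(\w)$ with $1<\gamma<2$ and restricts the digit at step $\th(k)$ to the window $(2^k-2^{\d k},2^k]$, so that all admissible branches at that step have comparable contraction $\approx 2^{-k(1+\e)/\eta}$ (this is where hypothesis \eqref{q:limexists} enters), the cumulative branching entropy is of order $\k(n)^2\gg n$, and the filler contribution is $o(\k(n)^2)$ \emph{only because} the filler sequence $a$ is constructed with the growth bound $a_n\le n$ (Lemma~\ref{l:z}); with an unconstrained filler realising the frequencies this fails. Restricting to a window (or weighting the measure by $N^{-t}$) is also what makes a uniform measure work; with uniform mass on all of $\{1,\dots,m_j\}$ and slow growth the estimate collapses. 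Finally, you apply ``box-counting'' directly, but the comparison between covers by balls and covers by fibre cylinders is nontrivial for infinite alphabets; the paper passes through a Billingsley-type dimension and verifies the Cajar--Wegmann condition $\log|\langle b_1\dotsb b_{n+1}\rangle_\w|/\log|\langle b_1\dotsb b_n\rangle_\w|\to1$, which again uses the growth control on both filler and branching digits.

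The second gap is in the $\b_\cT(\a)$ lower bound. Truncation to $\cD_m$ bounds the entropy (numerator) from below, but it does nothing to bound the Lyapunov denominator $\frac1n\sum_{\ell\le n}\log N_{\w_\ell,b_\ell}$ from \emph{above}, which is what the pointwise-dimension estimate needs; the occasional large digits of a $\mu_\w$-typical point enter the denominator in full. To close this you must either (i) restrict to the case $\sum_d\a_d\log N_d<+\infty$ and prove a strong law for the unbounded, non-identically distributed variables $\log N_{\w_\ell,b_\ell}$ (possible by splitting the indices into the finitely many classes $\{\ell:\w_\ell=s\}$ and applying the i.i.d.\ SLLN within each class, together with $\tau_s(\w,n)/n\to\a_s$), and additionally justify replacing balls by fibre cylinders for an infinite alphabet; or (ii) do what the paper does: replace $\cT$ by finite approximations $\cT^{(m)}$ in which all digits $b>m$ are merged into finitely many affine branches, prove the pointwise-dimension bound there (where all sums are finite and the ball-versus-cylinder comparison is standard), and pass to the limit via $\mu^{(m)}_{\w,\a}\to\mu_{\w,\a}$ with the correction factors $e_m\to1$ (this is where $\sum_d\a_d\log N_d<+\infty$ is used) and $E(\d)\to0$. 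Your sketch as written --- ``mass distribution principle with truncation to $\cD_m$ along a liminf-realising subsequence'' --- skips exactly this denominator control, which is the reason the paper introduces the finite GLS approximations at all. The remaining parts of your plan (the upper bound, $\mu_\w(E_\w(\a))=1$ via the SLLN for bounded indicators, and $\b_\cT(\a)\le\eta_\cT$ in the divergent case, which the paper gets as in \cite{FLMW10}) match the paper and are sound, modulo the observation that in the upper bound you need not merely $\sum_{b>m}N_{s,b}^{-t}<+\infty$ but its smallness for large $m$ to absorb the combinatorial factor counting positions of large digits.
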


Comparing \eqref{dimHFEq} with the statement from Theorem~\ref{FLMWThm}, one can see the effect of the non-autonomous setting in both terms. The first term is now given by the maximal exponent of convergence over all systems in $\mathcal T$. In the second term, the measure-theoretic entropy of the $\alpha$-Bernoulli measure appearing in the numerator of the expression from Theorem~\ref{FLMWThm} is now replaced by the fibre entropy of that measure for the non-autonomous system. %; see Section~\ref{Prelim}.
\medskip

The limit in condition \eqref{q:limexists}, when it exists, is in fact an expression for $\eta(\mathcal T_s)$ \en see \eqref{q:etalim} in Section~\ref{sec:Eta}.
The condition is quite unrestrictive and holds, for example, if $n\mapsto N_{s,n}$ is a regularly varying function (in the sense of \cite{Bing}) for each $s\in\cS_\N$. %{\color{awesome} What does it mean for a function on $\mathbb N$ to be regularly varying? What definition did you have in mind?}\ji{I want $N_{s,n}$ to satisfy $N_{s,n}=n^{p_s}\ell_s(n)$ as $n\to+\infty$ for some exponent $p_s>0$ and slowly varying function $\ell_s$ so that the limit in (1.2) is equal to $p_s^{-1}$.If we extend $n\mapsto N_{s,n}$ to a step-function on $\R$ in the natural way, then we can use the definition of regular variation on $\R$, i.e. that $N_{s,x}=x^{p_s}\ell_s(x)$ as $x\to+\infty$.}{\color{awesome}To where in \cite{Bing} are you referencing precisely?}\ji{The unlabelled definition in section 1.4.2 for regular variation combined with Theorem 1.4.1(iii) (Characterisation Theorem) on the previous page.}
\medskip

We can obtain Theorem~\ref{FLMWThm} as a subcase of Theorem~\ref{dimH} by taking $\cT=\{\mathcal T_L\}$ to contain only the L\"uroth system $\mathcal T_L$ introduced above (so $\cD=\{L\}\times\N$).
Then any frequency vector $\a=(\a_d)_{d\in\cD}$ automatically satisfies $(\dagger)$ since $\a_L\defeq\sum_{n\in\N}\a_{(L,n)}=1>0$.
In addition, $N_{L,n}=n(n+1)$ for all $n \in \mathbb N$, so $\lim_{n \to + \infty} \frac{\log n}{\log N_{L,n}}=\frac12$. Theorem~\ref{dimH} also includes the case where $\cT$ contains only finite GLS IFSs, in which case $\mathcal S^\mathbb N = \emptyset$ and \eqref{q:limexists} automatically holds. Moreover, in that case the digit set $\cD$ is finite, so $\eta(\mathcal T_s)=0$ for each $s\in\cS$ and \eqref{dimHFEq} becomes
$$\dim_HF_{\cT,\w}(\a) 
    =\frac{\sum_{s\in\cS}\a_s\log\a_s-\sum_{d\in\cD}\a_d\log\a_d}{\sum_{d\in\cD}\a_d\log N_d}.$$
In \cite[Section 3.3]{IKM23}, this result is already given for most $\omega$ and under additional assumptions. Therefore, Theorem~\ref{dimH} generalises this result.
\medskip

Other generalisations of \cite[Theorem~1.1]{FLMW10} include \cite{GL16} to group frequencies of digits in L\"uroth expansions and \cite{Rush} to countable symbolic dynamical systems modelled by subshifts of finite type. These are both in the autonomous setting. A non-autonomous result similar in flavour to Theorem~\ref{dimH} is that of \cite{NT}, which treats irrational numbers whose semi-regular continued fraction expansions satisfy certain growth conditions. On the other hand, there are many results in the direction of random dynamics for systems related to number expansions including the study of random continued fraction expansions in \cite{KKV17,DO18,KMTV22,BDKKKT24}, random binary expansions in \cite{DK20}, random $\b$-expansions in \cite{DK03,DV05,DK07, DV07,Kem14,BD17,Suz19,Tie23} and random L\"uroth expansions in \cite{KM22a,KM22b,vGKKS}. From the point of view of random dynamics these are all annealed results, while the results of this article are more of a quenched nature.

\medskip
Finally, we remark that the results in this article can be related to IFSs in $\mathbb R^2$ similar to the generalisations of Lalley-Gatzouras carpets studied in e.g.~\cite{Ree11,KR14}. Writing $\mathcal S = \{s_0, \ldots, s_{M-1}\}$ for some integer $M$, we can consider instead of $\mathcal T$ an affine IFS $\mathcal G = \{ g_d \}_{d \in \mathcal D}$ in $\mathbb R^2$ given by $g_d(v,x)=g_{s_i,b}(v,x) = (h_i(v), f_{s_i,b}(x))$, where $\{ h_i\}_{0 \le  i \le M-1}$ is some finite GLS IFS. (For example, taking $h_i(v)=\frac{v+i}{M}$ would work.) Then the set $[0,1]^2 \setminus \pi(\mathcal D^\mathbb N)$ has zero Lebesgue measure. To all but a countable set of points $v \in [0,1]$ there corresponds a unique sequence $\omega = (s_{i_n})_{n \in \mathbb N} \in \mathcal S^\mathbb N$ and for such $\omega$ the Besicovitch-Eggleston $\alpha$-level set $F_{\mathcal T, \omega}(\alpha)$ for $(\mathcal T, \omega)$ can then be obtained by slicing the level set
\[ F_{\mathcal G, \mathbbm 1}(\alpha) = \pi \left( \left\{ (\omega_n,b_n)_{n \in \mathbb N} \in \mathcal D^\mathbb N \, : \, \lim_{n \to \infty} \frac{\{ 1 \le k \le n \, : \, (\omega_k,b_k)=d\}}{n}=\alpha_d \quad \text{for all } d \in \mathcal D \right\} \right) \]
vertically at the position $v$. Under the conditions of Theorem~\ref{dimH} we can thus determine the Hausdorff dimension of these vertical slices.

\medskip
The article is organised as follows. In Section~\ref{Prelim}, we introduce some notation and collect some results on Hausdorff dimension and Besicovitch-Eggleston sets that we will use later. This section also contains the proof of Theorem~\ref{Spectrum}.
Section~\ref{UpperBoundSec} contains the proof of the upper bound of Theorem~\ref{dimH} in Proposition~\ref{p:dimHUpper}. The proof of the lower bound of $\eta_\cT$ in Theorem~\ref{dimH} is given in Section~\ref{sec:Eta} \en see Proposition~\ref{p:dimHLowerEta}. %, while the proofs of the lower bound $\b_\cT(\a)$ and the final statement in Theorem~\ref{dimH} are given in Section~\ref{sec:beta1} (see Proposition~\ref{p:dimHLowerBeta}).
The remainder of the proof of Theorem~\ref{dimH} is given in Section~\ref{sec:beta1}.
In particular, the proof of the lower bound of $\b_\cT(\a)$ is given in Proposition~\ref{p:dimHLowerBeta}.

\section{Preliminaries}\label{Prelim}

In this section, we fix some notation and collect some preliminaries that are used throughout the article.
For any subset $U \subseteq \mathbb R$, we use the notation $|U|= \sup \{ |x-y| \, : \, x,y \in U\}$ to denote its diameter. With $U^\circ$ we denote the interior of $U$.

\subsection{Digit Frequencies}
Let $\mathcal U$ be a countable set of symbols. We use $\mathcal U^\mathbb N$ to denote the set of one-sided infinite sequences $(u_n)_{n \in \mathbb N}$ with $u_n \in \mathcal U$ for all $n \in \mathbb N$. For any $u \in \mathcal U$, $n \in \mathbb N$ and sequence $\zeta = (\zeta_n)_{n \in \mathbb N}
 \in \mathcal U^\mathbb N$, we set
\[ \tau_u(\zeta, n) \defeq \# \{ 1 \le \ell \le n \, : \, \zeta_\ell=u\}\]
for the number of times that the symbol $u$ occurs in the first $n$ elements of $\zeta$, and we let
\[ \tau_u(\zeta) \defeq \lim_{n \to+\infty} \frac{\tau_u(\zeta,n)} n\]
denote the {\em frequency} of the symbol $u$ in the sequence $\zeta$ if the limit exists.

\subsection{Iterated Function Systems and Dimensions of Sets and Measures}
A map $g:[0,1] \to [0,1]$ for which a constant $0 < r< 1$ exists such that $|g(x)-g(y)| \le r|x-y|$ for all $x,y \in [0,1]$ is a {\em contraction} with contraction ratio $r$. Let $I$ be a countable index set. A family $\mathcal G = \{ g_i: [0,1] \to [0,1]\}_{i \in I}$ of contractions on $[0,1]$ is called an {\em iterated function system (IFS)}. It follows by the Cantor Intersection Theorem that for any sequence $(i_n)_{n \in \mathbb N} \in I^\mathbb N$ the set $\bigcap_{n \in \mathbb N} g_{i_1} \circ \cdots \circ g_{i_n}([0,1])$ is a singleton, which we denote by $\pi((i_n)_{n \in \mathbb N})$. The map $\pi: I^\mathbb N \to [0,1]$ defined in this way is the {\em projection map} of $\mathcal G$ and the set $\pi([0,1])$ is called the {\em limit set} of $\mathcal G$. The IFS $\mathcal G$ satisfies the {\em open set condition (OSC)} if there exists a non-empty open subset $U\subseteq [0,1]$ such that $g_i(U) \subseteq U$ and $g_i(U) \cap g_j(U)=\emptyset$ for all distinct $i, j \in I$. 

\medskip
For any $t \ge 0$ and any subset $A\subseteq [0,1]$, we denote the {\em $t$-Hausdorff outer-measure} of $A$ by
$$\cH^t(A)\defeq\lim_{\d\to0}\inf\left\{\sum_{j\in\cJ}|U_j|^t:A\subset\bigcup_{j\in\cJ}U_j,|U_j|<\d\, \, \,  \forall j\in\cJ\right\}.$$
The {\em Hausdorff dimension} of $A$ is then $\dim_H A\defeq\inf\{ t \ge 0 : \cH^t(A)=0\}$.
Fix a Borel probability measure $\mu$ on $[0,1]$. The {\em Hausdorff dimension of $\mu$} is defined by $$\dim_H\mu\defeq\inf\left\{\dim_H A\, :\, A\subset [0,1] \text{  a Borel set},\,\mu(A)=1\right\}.$$
Equivalently, we can also write (e.g.\ see \cite[p41]{Pes97})
\begin{equation}\label{q:dimHmu}
    \dim_H\mu=\lim_{\e\to0}\inf\left\{\dim_H A \, :\, A\subset [0,1] \text{  a Borel set},\,\mu(A)>1-\e\right\},
\end{equation}

The ({\em lower}) {\em pointwise dimension of $\mu$ at $x\in[0,1]$} is defined by $$\underline{d}_\mu(x)\defeq\liminf_{r\to0}\frac{\log\mu\big(B(x,r)\big)}{\log r},$$
where $B(x,r)$ denotes the closed ball of radius $r$ centred at $x$.
We use the following result from \cite{Young} (see also \cite[Theorem 7.1]{Pes97}) to relate lower bounds of the Hausdorff dimension and pointwise dimension of measures.
\begin{lem}\label{l:Pes}
    Let $\mu$ be a Borel probability measure on $[0,1]$.
    If $\underline{d}_\mu(x)\geq d$ for some constant $d\geq0$ and $\mu$-a.e.\ $x\in [0,1]$, then $\dim_H\mu\geq d$.
\end{lem}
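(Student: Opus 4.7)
The plan is to invoke the classical \emph{mass distribution principle}, stratifying the $\mu$-full-measure set $\{\underline d_\mu \geq d\}$ into countably many pieces on which the uniform bound $\mu(B(x,r)) \leq r^s$ holds for all $r$ below a fixed scale. This uniform bound can then be tested against arbitrary fine covers to produce a Hausdorff dimension lower bound on each piece, from which the statement about $\dim_H \mu$ follows by the definition.

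\medskip

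In detail, I would fix $s < d$, set $E \defeq \{x \in [0,1] : \underline d_\mu(x) \geq d\}$, and define, for each $n \in \N$,
\[ E_n \defeq \{x \in E : \mu(B(x,r)) \leq r^s \text{ for all } 0 < r \leq 1/n\}. \]
By hypothesis $\mu(E) = 1$, and since $\underline d_\mu(x) > s$ for every $x \in E$ one has $E = \bigcup_n E_n$, giving $\mu(E_n) \nearrow 1$. Borel measurability of $E_n$ reduces to upper semicontinuity of $x \mapsto \mu(B(x,r))$ for fixed $r$, after restricting the defining condition to rational $r$.

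\medskip

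Next I would take an arbitrary Borel set $A \subseteq [0,1]$ with $\mu(A) = 1$, so that $\mu(A \cap E_n) = \mu(E_n) > 0$ for all sufficiently large $n$. For any countable cover $\{U_j\}$ of $A \cap E_n$ by sets of diameter at most $1/n$, every $U_j$ meeting $E_n$ lies inside $B(x_j, |U_j|)$ for some $x_j \in U_j \cap E_n$, so by definition of $E_n$ we get $\mu(U_j) \leq |U_j|^s$; summing over $j$ yields $\mu(E_n) \leq \sum_j |U_j|^s$. Taking the infimum over such covers and then letting the diameter bound shrink gives $\cH^s(A \cap E_n) \geq \mu(E_n) > 0$, hence $\dim_H A \geq \dim_H(A \cap E_n) \geq s$. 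Passing to the infimum over admissible $A$ in the definition of $\dim_H \mu$, and then letting $s \nearrow d$, delivers $\dim_H \mu \geq d$.

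\medskip

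I expect the only real technical subtlety to be the verification of Borel measurability of the sets $E_n$ and the elementary containment $U \subseteq B(x,|U|)$ for any $x \in U$; both are routine. The whole argument ultimately rests on the single inequality $\mu(U) \leq |U|^s$ for cover elements meeting $E_n$, which is immediate once the stratification is in place, so there is no genuine obstacle beyond correctly arranging the scales.
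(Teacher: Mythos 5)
Your argument is correct: the stratification $E_n=\{x:\mu(B(x,r))\le r^{s}\text{ for all }r\le 1/n\}$ together with the mass distribution principle gives $\mathcal H^{s}(A\cap E_n)\ge\mu(E_n)>0$ for every Borel $A$ of full measure, and letting $s\nearrow d$ yields $\dim_H\mu\ge d$. The paper does not prove this lemma but cites Young and \cite[Theorem 7.1]{Pes97}, and your proof is exactly the standard non-uniform mass distribution argument underlying those references, so it is essentially the same approach.
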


\subsection{Definition of Non-autonomous GLS Systems}\label{PrelimNGLSDef}

Throughout the text, we fix the following system. Let $\cS$ be a finite set and let $\cT=\{\mathcal T_s\}_{s\in\cS}$ be a family of {\em GLS IFSs}, meaning that the following conditions hold:
\begin{itemize}
\item each system $\mathcal T_s = \{ f_{s,b}:[0,1]\to [0,1]\}_{s \in \mathcal B_s}$ has a countable index set $\mathcal B_s \subseteq \mathbb N$;
\item the IFS $\mathcal T_s$ satisfies the OSC;
\item each map $f_{s,b}$ is an affine contraction;
\item $\sum_{b \in \mathcal B_s} |f_{s,b}([0,1])|=1$ for each $s$.
\end{itemize}
Writing $N_{s,b}^{-1}$ for the contraction ratio of $f_{s,b}$, then these conditions imply that $\sum_{b \in \mathcal B_s} N_{s,b}^{-1}=1$ for each $s \in \mathcal S$. We let $\cB_s\defeq\N$ in case $\cB_s$ is an infinite set, and, otherwise, we let $\cB_s\defeq\{1,2,\dotsc,B_s\}$ for some $B_s \in \N$.
%We assume throughout that there is at least one $s \in \cS$ for which $\cB_s=\N$.
We assume that the $b \in \mathcal B_s$ are ordered by the contraction ratios for the maps $f_{s,b}$ so that, for $b, b' \in \cB_s$ with $b < b'$, it holds that $N_{s,b}\le N_{s,b'}$.

\medskip
Set $\Omega \defeq \mathcal S^\mathbb N$. For each $\omega \in \Omega$ and $n \in \mathbb N$ let $\cB_\w^n\defeq\prod_{1\leq\ell\leq n}\cB_{\w_\ell}$ denote the collection of all blocks $b_1 \dotsb b_n$ with the property that $b_\ell \in \cB_{\w_\ell}$ for all $1 \le \ell \le n$. For $b_1 \dotsb b_n \in \mathcal B_\omega^n$ we use the notation
\[ \langle b_1 \dotsb b_n \rangle_\w \defeq f_{\omega_1, b_1} \circ \cdots \circ f_{\omega_n,b_n}([0,1]),\]
which is a closed interval that we call a {\em level-$n$ fibre fundamental interval (FFI)}. We denote the collection of all level-$n$ FFIs by $\mathcal F(n)$. Note that for each $n \in \N$, $\langle b_1 \dotsb b_n \rangle_\w^\circ \cap \langle c_1 \dotsb c_n \rangle^\circ_\w = \emptyset$ whenever $b_1 \dotsb b_n  \neq c_1 \dotsb c_n$ and that
\begin{equation}\label{q:lebfibcyl}
\left|\langle b_1\dotsb b_n\rangle_\w \right|=\prod_{1\leq\ell\leq n}\frac1{N_{\w_\ell, b_\ell}}\quad\text{ and }\quad
    \left| \bigcup_{b_1 \dotsb b_n\in\cB_\w^n}\langle b_1\dotsb b_n\rangle_\w \right|=1.
\end{equation}
Since $\langle b_1 \dotsb b_n \rangle_\w \subseteq \langle b_1 \dotsb b_{n-1} \rangle_\w$ for each $n$ and $\lim\limits_{n \to+\infty} \left| \langle b_1 \dotsb b_n \rangle_\w \right| = 0$,
the collection
\begin{equation}\label{q:FFISet}
    \cF\defeq\bigcup_{n\in\N}\cF(n)=\big\{ \langle b_1 \dotsb b_n \rangle_\w : b_1 \dotsb b_n \in \cB_\w^n, \, n \in \N \big\}
\end{equation}
generates the Borel $\s$-algebra $\cB([0,1])$ on $[0,1]$.

\medskip
Let $\a=(\a_d)_{d\in\cD}$ be a frequency vector satisfying $(\dagger)$. Recall that $\W_\cT(\a)$ is the set of $\w\in\W$ for which $F_{\cT,\w}(\a)$ is non-empty and that $\a_s\defeq\sum_{b\in\cB_s}\a_{(s,b)}$. We prove Theorem~\ref{Spectrum}, which asserts that $\W_\cT(\a)=\{\w\in\W:\tau_s(\w)=\a_s\,\forall s\in\cS\}$. 

\begin{proof}[Proof of Theorem~\ref{Spectrum}]
    First, suppose that $\w\in\W_\cT(\a)$. Then there is a sequence $(b_n)_{n \in \mathbb N} \in E_\omega(\alpha)$ and
    $$\tau_s(\w)=\sum_{b\in\cB_s}\tau_{(s,b)}((\w_n,b_n)_{n \in \mathbb N})=\sum_{b\in\cB_s}\a_{(s,b)}=\a_s$$
    for all $s\in\cS$, which proves the forward direction.\medskip
        
    To prove the converse statement, take $\w\in\W$ such that $\tau_s(\w)=\a_s$ for all $s\in\cS$. For each $s\in\cS$ it follows from e.g.~the proof of \cite[Proposition~4.3]{IKM23} in case $\# \mathcal B_s <+\infty$ and from e.g.~\cite[Lemma 2.1]{FLMW10} or \cite[Theorem 3.1]{Aafko} in case $\mathcal B_s = \mathbb N$ that there exists a sequence $(b_{s,n})_{n\in\N} \in \mathcal B_s^\mathbb N$ whose digit frequencies are given by the frequency vector $(\a_{(s,b)}/\a_s)_{b\in\cB_s}$:
    \begin{equation}\label{q:dsnfreq}
        \lim_{n\to+\infty}\frac{\#\{1\leq\ell\leq n\, :\, b_{s,\ell}=(s,b)\}}n=\frac{\a_{(s,b)}}{\a_s},\q\forall b\in\cB_s.
    \end{equation}
    Recall that $\tau_s(\w,n)$ denotes the number of times that the symbol $s\in\cS$ appears in the first block of $n\in\N$ symbols of $\w$. For each $n\in\N$, set $b_n\defeq b_{\w_n,\tau_{\w_n}(\w,n)}$. Then for each $(s,b)\in\cD$, it follows from the assumption $\tau_s(\w)=\a_s$ and \eqref{q:dsnfreq} that
    \begin{align*}
        \lim_{n\to+\infty}\frac{\#\{1\leq\ell\leq n\, :\, (\omega_\ell,b_\ell)=(s,b)\}}n
            &=\lim_{n\to+\infty}\frac{\#\{1\leq\ell\leq\tau_s(\w,n)\, :\, b_{s,\ell}=b\}}{\tau_s(\w,n)}\cdot\frac{\tau_s(\w,n)}n\\
            &=\frac{\a_{(s,b)}}{\a_s}\cdot \alpha_s =\a_{(s,b)},
    \end{align*}
    as desired. Therefore, $(b_n)_{n \in \mathbb N} \in E_\omega(\alpha)$ and thus $\pi_\omega((b_n)_{n \in \mathbb N}) \in F_{\mathcal T, \omega}(\alpha)$, yielding the result.
\end{proof}

For any fixed $\omega \in \Omega$, the set $[0,1]$ splits into three parts:
\begin{itemize}
\item the set $[0,1]\setminus \pi_\omega(\mathcal B_\omega^\mathbb N)$ of points that have no $(\mathcal T, \omega)$-expansion;
\item the set $\left(\bigcup_{n \in \mathbb N} \bigcup_{b_1 \cdots b_n \in \mathcal B_\omega^n} f_{\omega_1,b_1} \circ \cdots \circ f_{\omega_n,b_n}(\{0,1\}) \right) \setminus \{0,1\}$ of points that have precisely two $(\mathcal T, \omega)$-expansions;
\item the set $X_\omega = \bigcap_{n \in \mathbb N} \bigcup_{b_1 \cdots b_n \in \mathcal B_\omega^n} \langle b_1 \cdots b_n \rangle_\omega^\circ \cup (\{0,1\} \cap \pi_\omega(\mathcal B_\omega^\mathbb N))$ of points that have a unique $(\mathcal T, \omega)$-expansion.
\end{itemize}
Since $[0,1] \setminus X_\omega$ is a countable set, we have $\dim_H F_{\mathcal T, \omega}(\alpha) = \dim_H (F_{\mathcal T, \omega}(\alpha)\cap X_\omega)$. So, whenever it is more convenient we will restrict our attention to $F_{\mathcal T, \omega}(\alpha)\cap X_\omega$.

\section{Upper Bound for \texorpdfstring{$\dim_HF_{\cT,\w}(\a)$}{}}\label{UpperBoundSec}

In this section, we prove the following proposition, which is the upper bound in Theorem~\ref{dimH}.

\begin{prop}\label{p:dimHUpper}
    Let $\a=(\a_d)_{d\in\cD}$ be a frequency vector satisfying $(\dagger)$ and fix $\w\in\W_\cT(\a)$.
    Then $\dim_HF_{\cT,\w}(\a)\leq\max\{\eta_\cT,\b_\cT(\a)\}$.
\end{prop}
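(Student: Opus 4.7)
The plan is to establish that $\cH^t\bigl(F_{\cT,\w}(\a)\bigr) = 0$ for every $t > \max\{\eta_\cT, \b_\cT(\a)\}$, by explicitly covering the set with level-$n$ FFIs whose digit frequencies are close to $\a$. Fix such a $t$ and a small $\e > 0$. Using the $\liminf$-definition of $\b_\cT(\a)$, select an $m \in \N$ with
\[
\frac{\sum_{s\in\cS} \a_s \log \a_s - \sum_{d \in \cD_m} \a_d \log \a_d}{\sum_{d\in\cD_m} \a_d \log N_d} < t - \e,
\]
and, using $t > \eta_\cT \geq \eta(\cT_s)$, arrange simultaneously that the tail sums $R_{m,s} \defeq \sum_{b>m,\,b\in\cB_s} N_{s,b}^{-t}$ are finite (vanishing when $\cB_s$ is finite). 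To gain uniform control over the digit frequencies I decompose $E_\w(\a) = \bigcup_{N\in\N} E_\w^{m,\e,N}(\a)$, where $E_\w^{m,\e,N}(\a)$ collects those $(b_n) \in E_\w(\a)$ for which $\tfrac{1}{n}\tau_d\bigl((\w_k,b_k),n\bigr) \in (\a_d - \e, \a_d + \e)$ for every $d \in \cD_m$ and every $n \geq N$. Since $\dim_H F_{\cT,\w}(\a) = \sup_N \dim_H \pi_\w\bigl(E_\w^{m,\e,N}(\a)\bigr)$, it suffices to bound the dimension of each $\pi_\w\bigl(E_\w^{m,\e,N}(\a)\bigr)$; for fixed $N$, this set is covered for every $n \geq N$ by the FFIs whose blocks lie in the set $\Sigma_n^{m,\e}$ of $\e$-approximate-$\a$-frequency words on $\cD_m$.

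The central estimate stratifies $\Sigma_n^{m,\e}$ by the multiplicity vector $(k_d)_{d\in\cD_m}$, factorises the count across the disjoint type-$s$ position-blocks of size $n_s \defeq \tau_s(\w,n)$, and bounds the sum of $t$-th powers of FFI-diameters by
\[
\sum_{(k_d)} \prod_{s \in \cS} \binom{n_s}{(k_{(s,b)})_{b \leq m},\; n_s - \sum_{b\leq m} k_{(s,b)}} \prod_{d \in \cD_m} N_d^{-t k_d}\, \prod_{s\in\cS} R_{m,s}^{\,n_s - \sum_{b\leq m} k_{(s,b)}},
\]
in which the multinomial coefficients count arrangements of small digits within each type-$s$ block, the $R_{m,s}$-factor collects the sum of $N_{s,b}^{-t}$ over all admissible large digits at the remaining ``bad'' positions, and the outer sum has polynomially many terms in $n$. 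Applying Stirling's formula, using $n_s/n \to \a_s$ (by Theorem~\ref{Spectrum}) and $k_d/n \in (\a_d - \e, \a_d + \e)$, and taking $\tfrac{1}{n}\log$ yields the exponential growth rate
\[
\sum_{s\in\cS} \a_s \log \a_s - \sum_{d \in \cD_m} \a_d \log \a_d - t \sum_{d \in \cD_m} \a_d \log N_d + \sum_{s \in \cS}\bigl(- p_{m,s}\log p_{m,s} + p_{m,s} \log R_{m,s}\bigr) + O(\e),
\]
where $p_{m,s} \defeq \a_s - \sum_{b\leq m} \a_{(s,b)}$. By the choice of $m$, the first three terms contribute at most $-\e \sum_{d\in\cD_m}\a_d\log N_d$, while the error terms satisfy $-p_{m,s}\log p_{m,s} \to 0$ as $m \to \infty$ (since $p_{m,s}\to 0$) and $p_{m,s}\log R_{m,s} \leq 0$ once $R_{m,s} \leq 1$. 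Hence the cover sum decays exponentially in $n$, giving $\cH^t\bigl(\pi_\w(E_\w^{m,\e,N}(\a))\bigr) = 0$ and $\dim_H F_{\cT,\w}(\a) \leq t$; letting $t \downarrow \max\{\eta_\cT, \b_\cT(\a)\}$ finishes the proof.

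The main obstacle is the tail term $\sum_s p_{m,s}\log R_{m,s}$, which couples the digit-frequency decay $p_{m,s}$ to the Lyapunov-series decay $R_{m,s}$ in a way that is not immediately controlled by either $\eta_\cT$ or $\b_\cT(\a)$ alone. The saving observation is that $R_{m,s} < 1$ eventually, rendering this contribution nonpositive so that it cannot erode the negative margin produced by the first three terms. A secondary bookkeeping challenge is ensuring that the factorisation of the cover sum across $\cS$ and over the good/bad digit split is carried out consistently, so that the resulting entropies, Lyapunov exponents and tail factors combine into precisely the expression matching $\b_\cT(\a)$ from its $\liminf$-definition; this is where the identity $\sum_s \a_s = 1$ together with the independence of multiplicity counts across distinct $s \in \cS$ plays its role.
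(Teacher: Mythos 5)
Your proposal follows essentially the same route as the paper's proof: cover $F_{\cT,\w}(\a)$ by level-$n$ FFIs whose first $n$ digits have $\e$-approximate frequencies on the finite window $\cD_m$, stratify by multiplicity vectors, count arrangements with multinomial coefficients times the tail factors $R_{m,s}$ (finite since $t>\eta_\cT$), apply Stirling, and choose $m$ along the $\liminf$-subsequence defining $\b_\cT(\a)$ so that the main terms yield a negative exponential rate while $-p_{m,s}\log p_{m,s}$ and $p_{m,s}\log R_{m,s}$ are negligible or nonpositive for large $m$ — exactly the structure of the paper's Lemmas on $H_n$, the combinatorial count, and the Stirling estimate. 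The only point to tighten is quantifier order: the constant in your $O(\e)$ term depends on $m$, so the frequency tolerance must be fixed after $m$ (the paper uses separate parameters $\d$ and $\e$ for this), which your decomposition of $E_\w(\a)$ accommodates without any change to the argument.
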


Throughout this section, we fix a frequency vector $\alpha = (\alpha_d)_{d \in \mathcal D}$ satisfying $(\dagger)$ and an $\w\in\W_\cT(\a)$.
We first introduce some notation and then prove four auxiliary lemmas.

\medskip
Recall that for each $m \in \mathbb N$ we have set $\cD_m = \{(s,b)\in\cD:b\leq m\}$. Additionally, for each $s\in\cS$ put $\cD_m^c\defeq\{(s,b)\in\cD:b>m\}$, $\cB_{s,m}\defeq\{b\in\cB_s:b\leq m\}$ and $\cB_{s,m}^c\defeq\{b\in\cB_s:b>m\}$. For $b_1\dotsb b_n\in\cB_\w^n$ set
$$\tau_d(\w,b_1\dotsb b_n )\defeq\#\{1\leq\ell\leq n:(\w_\ell,b_\ell)=d\}.$$
For each $n,m\in\N$ and $\varepsilon >0$, let $\cN_n=\cN_n(m,\e)$ be the set of vectors $(n_d)_{d\in\cD_m}\in\mathbb Z_{\ge 0}^{\# \cD_m}$ that satisfy the following two properties:
\begin{itemize}\parskip=2pt
\item $\left\lvert\frac{n_d}n-\a_d\right\rvert<\e$ for all $d \in \cD_m$;
\item $\sum_{b \in \mathcal B_{s,m}} n_{(s,b)} \le \tau_s(\omega,n)$ for all $s \in \mathcal S$.
\end{itemize}
The entries $n_d$ in a vector from $\mathcal N_n$ denote possible numbers of occurrences of the digits $d \in \mathcal D_m$ in a sequence $(\omega_n,b_n)_{n \in \mathbb N} \in \mathcal D^\mathbb N$, such that the frequency of the digit $d=(s,b)$ in the first $n$ terms of the sequence $(\omega_n,b_n)_{n \in \mathbb N}$ is $\varepsilon$-close to $\alpha_d$ that are compatible with $\omega$.

\begin{lem}\label{l:Nnonempty}
Let $\varepsilon>0$. Then there is an $N_{\varepsilon}\in\N$ such that $\mathcal N_n(m,\varepsilon) \neq \emptyset$ for all $n \ge N_{\varepsilon}$ and $m \in \mathbb N$.
\end{lem}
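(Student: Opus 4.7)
The plan is to construct an explicit candidate vector $(n_d)_{d\in\cD_m}$ in $\cN_n(m,\e)$, one whose entries scale proportionally to $\alpha_{(s,b)}/\alpha_s$ within each $s$-block, and then to choose $N_\e$ large enough (independently of $m$) so that both defining conditions are met.

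First, by the non-degeneracy assumption \eqref{NonDegenerate}, $\a_s>0$ for every $s\in\cS$, so the ratios $\a_{(s,b)}/\a_s$ are well-defined. Second, since $\w\in\W_\cT(\a)$, Theorem~\ref{Spectrum} yields $\tau_s(\w)=\a_s$ for every $s\in\cS$, and because $\cS$ is finite we can choose $N_\e\in\N$ (depending only on $\e$ and $\w$) so that, for all $n\ge N_\e$ and all $s\in\cS$,
\[
\left|\frac{\tau_s(\w,n)}{n}-\a_s\right|<\frac{\e\,\a_s}{2}\q\text{and}\q \frac{1}{n}<\frac{\e}{2}.
\]

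Given any $m\in\N$, define
\[
n_{(s,b)}\defeq\left\lfloor\frac{\a_{(s,b)}}{\a_s}\,\tau_s(\w,n)\right\rfloor\q\text{for } (s,b)\in\cD_m.
\]
The sum constraint is then immediate from $\sum_{b\in\cB_{s,m}}\a_{(s,b)}\le\a_s$:
\[
\sum_{b\in\cB_{s,m}} n_{(s,b)}\le\sum_{b\in\cB_{s,m}}\frac{\a_{(s,b)}}{\a_s}\,\tau_s(\w,n)\le\tau_s(\w,n).
\]
For the frequency constraint, estimate
\[
\left|\frac{n_{(s,b)}}{n}-\a_{(s,b)}\right|
\le\a_{(s,b)}\left|\frac{\tau_s(\w,n)}{\a_s n}-1\right|+\frac{1}{n}
\le\frac{1}{\a_s}\left|\frac{\tau_s(\w,n)}{n}-\a_s\right|+\frac{1}{n}<\e,
\]
where the final inequality uses $\a_{(s,b)}\le 1$ and the two bounds from the choice of $N_\e$.

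The main thing to watch is that $N_\e$ is genuinely uniform in $m$: all constants in the estimate above depend only on $\e$ and the (finite) data $\{\a_s:s\in\cS\}$ together with the convergence rate of $\tau_s(\w,\cdot)/n$, none of which involves $m$. Thus $(n_d)_{d\in\cD_m}\in\cN_n(m,\e)$ for every $n\ge N_\e$ and every $m\in\N$, which yields the lemma.
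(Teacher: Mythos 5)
Your proof is correct and follows essentially the same strategy as the paper: exhibit an explicit integer vector close to $n\a$ and verify the two defining conditions of $\cN_n(m,\e)$, with uniformity in $m$ coming for free since the construction and the choice of $N_\e$ only involve the finite data $\{\a_s\}_{s\in\cS}$ and Theorem~\ref{Spectrum}. The only difference is cosmetic: the paper takes $n_d=\lfloor n\a_d-\tfrac{n\e}{2}\rfloor$ and uses the lower bound on $\tau_s(\w,n)$ to get the sum constraint, whereas your choice $n_{(s,b)}=\lfloor \a_{(s,b)}\tau_s(\w,n)/\a_s\rfloor$ makes the sum constraint automatic (and keeps the entries manifestly non-negative), at the cost of needing $\a_s>0$, which \eqref{NonDegenerate} provides.
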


\begin{proof}
By $(\dagger)$ and Theorem~\ref{Spectrum}, there is an $N_1\in\N$ such that, for each $n \ge N_1$ and $s \in \mathcal S$, we have
\[ \tau_s(\omega,n) > n \sum_{b \in \mathcal B_s} \alpha_{(s,b)} - \frac{n\varepsilon}{2}.\]
Put $N_{\varepsilon} \defeq \max \{ N_1, \lceil \frac{2}{\varepsilon} \rceil \}$, and, for each $m \in \mathbb N$, $n \ge N_{\varepsilon}$ and $d \in \mathcal D_m$, set $n_d = \lfloor n \alpha_d - \frac{n \varepsilon}{2} \rfloor$. Then for $m \in \mathbb N$, $n \ge N_{\varepsilon}$ and $d \in \mathcal D_m$,
\[ n \alpha_d - \frac{n \varepsilon}{2}-1 < \left\lfloor n\alpha_d - \frac{n\varepsilon}{2} \right\rfloor = n_d < n \alpha_d\]
and hence
\[ \alpha_d - \varepsilon < \alpha_d - \frac{\varepsilon}{2} - \frac1n < \frac{n_d} n < \alpha_d.\]
Moreover, for any $s \in \mathcal S$, since $\# \mathcal B_{s,m} \ge 1$,
\[ \sum_{b \in \mathcal B_{s,m}} n_{(s,b)} \le \sum_{b \in \mathcal B_{s,m}} \left(n \alpha_{(s,b)} - \frac{n \varepsilon}{2} \right) \le n \sum_{b \in \mathcal B_{s,m}} \alpha_{(s,b)} - \frac{n \varepsilon}{2} < \tau_s(\omega,n).\]
Therefore, $(n_d)_{d \in \mathcal D_m} \in \mathcal N_n(m,\varepsilon)$.
\end{proof}

The next lemma concerns the sets $$H_n=H_n(\a,m,\e)\defeq \pi_\omega \left( \left\{(b_k)_{k \in \mathbb N}\in \mathcal B_\omega^\mathbb N:\left\lvert\frac{\tau_d( (\omega_k,b_k)_{k \in \mathbb N},n)}n-\a_d\right\rvert<\e\, \, \, \forall d\in\cD_m\right\} \right),$$ which will be used to cover $F_{\cT,\w}(\a)$. Note that $\mathcal N_n \neq \emptyset$ if $H_n \neq \emptyset$.

\begin{lem}\label{UpperLem1}
    Fix $t\geq0$, $\e>0$ and $m,k_0\in\N$. Then
    \begin{equation}\label{HtHn}
    \cH^t\left(\bigcap_{k\geq k_0}H_k\right)\leq\liminf_{n\to+\infty}\sum_{(n_d)\in\cN_n}\left(\prod_{d\in\cD_m}N_d^{-tn_d}\right)
        \sum_{\stackrel{b_1\cdots b_n \in \mathcal B_\omega^n}{\tau_d(\omega, b_1\cdots b_n)=n_d}} \prod_{\stackrel{1 \le \ell \le n}{b_\ell >m}} N_{\omega_\ell, b_\ell}^{-t}.
    \end{equation}
\end{lem}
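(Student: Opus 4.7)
The plan is to construct, for each $n \ge k_0$, an explicit cover of $\bigcap_{k \ge k_0} H_k$ by level-$n$ FFIs and then to sum the $t$-th powers of their diameters, matching the right-hand side after a combinatorial regrouping.

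First I would observe that $\bigcap_{k \ge k_0} H_k \subseteq H_n$ for every $n \ge k_0$, and that each point of $H_n$ admits a $(\cT,\w)$-expansion $(b_k)_{k\in\N}\in\cB_\w^\N$ whose initial block $b_1\cdots b_n$ satisfies $|\tau_d(\w,b_1\cdots b_n)/n - \a_d|<\e$ for all $d\in\cD_m$; this point lies in the level-$n$ FFI $\langle b_1\cdots b_n\rangle_\w$. Consequently the family
\[
\cC_n \defeq \bigl\{\langle b_1\cdots b_n\rangle_\w : b_1\cdots b_n \in \cB_\w^n,\ |\tau_d(\w,b_1\cdots b_n)/n - \a_d|<\e\ \forall d\in\cD_m\bigr\}
\]
covers $\bigcap_{k\ge k_0}H_k$. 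Since $\cS$ is finite, the digits within each $\cB_s$ are ordered by contraction ratio, and each $f_{s,1}$ is a strict contraction, the quantity $r \defeq \max_{s\in\cS} N_{s,1}^{-1}$ lies in $(0,1)$, so every member of $\cC_n$ has diameter at most $r^n$. Thus $\cC_n$ is a $\delta$-cover once $n$ is large, justifying the application of the definition of $\cH^t$.

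Next I would regroup $\cC_n$ according to the exact digit-count vector $(n_d)_{d\in\cD_m}$ with $n_d=\tau_d(\w,b_1\cdots b_n)$. The $\e$-inequality in the definition of $\cN_n(m,\e)$ comes directly from membership in $\cC_n$, while the inequality $\sum_{b\in\cB_{s,m}}n_{(s,b)}\le\tau_s(\w,n)$ holds because positions $\ell\le n$ with $b_\ell\in\cB_s$ are exactly those with $\w_\ell=s$, of which there are $\tau_s(\w,n)$. Hence the vectors $(n_d)$ arising from blocks in $\cC_n$ lie precisely in $\cN_n(m,\e)$.

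Then I would compute the sum of $t$-th powers of diameters: using \eqref{q:lebfibcyl} and splitting the product over $\ell$ according to whether $b_\ell\le m$ or $b_\ell>m$, for any fixed $(n_d)\in\cN_n$ and any block with those counts,
\[
\prod_{\ell=1}^n N_{\w_\ell,b_\ell}^{-t} = \Biggl(\prod_{d\in\cD_m} N_d^{-tn_d}\Biggr)\prod_{\substack{1\le\ell\le n\\ b_\ell>m}} N_{\w_\ell,b_\ell}^{-t},
\]
since each $d=(s,b)\in\cD_m$ contributes exactly $n_d$ factors of $N_d^{-t}$. Summing over $(n_d)\in\cN_n$ and then over compatible blocks, and finally taking $\liminf_{n\to+\infty}$ (valid because $r^n\to 0$ uniformly), yields the upper bound on $\cH^t\!\left(\bigcap_{k\ge k_0}H_k\right)$ stated in \eqref{HtHn}.

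The routine step is the algebraic regrouping; the only point requiring mild care is ensuring that the FFIs in $\cC_n$ genuinely form a $\delta$-cover with $\delta\to 0$, which relies on the finiteness of $\cS$ and on each $N_{s,1}$ being strictly greater than $1$. No convergence assumption on the resulting series is needed, since an infinite upper bound is vacuously true for $\cH^t$.
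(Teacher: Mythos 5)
Your proof is correct and follows essentially the same route as the paper: cover $\bigcap_{k\geq k_0}H_k\subseteq H_n$ by level-$n$ FFIs, regroup according to the digit-count vectors in $\cN_n(m,\e)$, factor out the $\cD_m$-contributions using \eqref{q:lebfibcyl}, and pass to $\liminf_{n\to+\infty}$ and $\delta\to0$. The only cosmetic differences are that you certify the $\delta$-cover via the uniform bound $r^n$ with $r=\max_{s\in\cS}N_{s,1}^{-1}<1$ (the paper instead picks $n(\delta)$ so that all level-$n(\delta)$ FFIs are small) and that you cover with FFIs whose defining blocks satisfy the frequency condition rather than with all FFIs meeting $H_n$.
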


\begin{proof}
    Fix $\d>0$ and let $n(\d)$ be the smallest integer such that $|\langle b_1\dotsb b_{n(\d)}\rangle_\w|^t<\d$ for all $b_1\dotsb b_{n(\d)}\in\cB_\w^{n(\d)}$.
    Then for any $n > \max\{ k_0, n(\delta)\}$, we have that
\[ \begin{split}
\cH^t_\delta\left(\bigcap_{k\geq k_0}H_k\right) =\ & \inf\left\{\sum_{U\in\cU}|U|^t\,:\cU\text{ is a }\d\text{-cover of }\bigcap_{k \ge k_0} H_k\right\}\\
%\le \ & \inf\left\{\sum_{U\in\cU}|U|^t\,:\cU\text{ is a }\d\text{-cover of } H_n\right\}\\
\le \ & \inf_{k \ge n(\delta)}\left\{\sum_{
\stackrel{b_1\cdots b_k \in \mathcal B_\omega^k}{\langle b_1\dotsb b_k\rangle_\w \cap H_n \neq \emptyset}}|\langle b_1\dotsb b_k\rangle_\w|^t\right\}
\le  \sum_{
\stackrel{b_1\cdots b_n \in \mathcal B_\omega^n}{\langle b_1\dotsb b_n\rangle_\w \cap H_n \neq \emptyset}}|\langle b_1\dotsb b_n\rangle_\w|^t\\
\le \ & \sum_{(n_d) \in \mathcal N_n} \sum_{\stackrel{b_1\cdots b_n \in \mathcal B_\omega^n}{\tau_d(\omega, b_1\cdots b_n)=n_d,d\in\cD_m}} \prod_{1 \le \ell \le n} N_{\omega_\ell, b_\ell}^{-t}\\
=\ & \sum_{(n_d) \in \mathcal N_n} \left( \prod_{d \in \mathcal D_m} N_d^{-tn_d} \right)\sum_{\stackrel{b_1\cdots b_n \in \mathcal B_\omega^n}{\tau_d(\omega, b_1\cdots b_n)=n_d, d\in\cD_m}} \prod_{\stackrel{1 \le \ell \le n}{b_\ell >m}} N_{\omega_\ell, b_\ell}^{-t}.
\end{split}\]
Since this holds for all $n > \max\{ k_0, n(\delta)\}$, we find that
\[ \cH^t_\delta\left(\bigcap_{k\geq k_0}H_k\right)  \le \liminf_{n \to +\infty} \sum_{(n_d) \in \mathcal N_n} \left( \prod_{d \in \mathcal D_m} N_d^{-tn_d} \right)\sum_{\stackrel{b_1\cdots b_n \in \mathcal B_\omega^n}{\tau_d(\omega, b_1\cdots b_n)=n_d}} \prod_{\stackrel{1 \le \ell \le n}{b_\ell >m}} N_{\omega_\ell, b_\ell}^{-t}.\]
The result follows by taking $\delta \to 0$.  
 %   and note that $n(\d)\to+\infty$ as $\d\to0$. By definition of $N_{n(\d)}$, the union over $(n_d)\in\cN_{n(\d)}$ of the collections $\cU_{n(\d)}(n_d)$ of level-$n(\d)$ FFIs forms a $\d$-cover of $H_k$ for all $k\geq k_0$. Thus, \eqref{q:lebfibcyl} yields
% \begin{align*}
 %       \cH^t\left(\bigcap_{k\geq k_0}H_k\right)
 %           &=\lim_{\d\to0}\inf\left\{\sum_{U\in\cU}|U|^t\,:\cU\text{ is a }\d\text{-cover of }H_k\text{ for all }k\geq k_0\right\}\\
  %          &\leq\lim_{\d\to0}\sum_{(n_d)\in\cN_{n(\d)}}\sum_{U\in\,\cU_{n(\d)}(n_d)}|U|^t\\
   %         &=\lim_{n\to+\infty}\sum_{(n_d)\in\cN_n}\sum_{\langle b_1\dotsb b_n\rangle_\w\in\,\cU_n(n_d)}\prod_{1\leq\ell\leq n}N_{\w_\ell,b_\ell}^{-t}\\
    %        &=\lim_{n\to+\infty}\sum_{(n_d)\in\cN_n}\sum_{\langle b_1\dotsb b_n\rangle_\w\in\,\cU_n(n_d)}\left(\prod_{\substack{1\leq\ell\leq n\\ b_\ell\leq m}}N_{\w_\ell,b_\ell}^{-t}\right)
     %           \!\left(\prod_{\substack{1\leq\ell\leq n\\ b_\ell>m}}N_{\w_\ell,b_\ell}^{-t}\right)\\
     %       &=\lim_{n\to+\infty}\sum_{(n_d)\in\cN_n}\left(\prod_{d\in\cD_m}N_d^{-tn_d}\right)
      %          \sum_{\langle b_1\dotsb b_n\rangle_\w\in\,\cU_n(n_d)}\prod_{\substack{1\leq\ell\leq n\\ b_\ell>m}}N_{\w_\ell,b_\ell}^{-t}
    %\end{align*}
    %as desired.
\end{proof}

In the next lemma, we compute the inner sum of the right-hand side of $\eqref{HtHn}$. % for each $n\in\N$ and $(n_d)\in\cN_n$. 
Given a vector $(n_d)\in\cN_n$, for each $s \in \mathcal S$, we set $n_s \defeq \sum_{b \in \mathcal B_{s,m}} n_{(s,b)}$.
%we extend it with additional elements $n_{(s,m+1)}$ for each $s \in \mathcal S$ by setting $n_{(s,m+1)}\defeq\tau_s(\w,n)-\sum_{b\in\cB_{s,m}}n_{(s,b)}$. Note that $n_{(s,m+1)}=0$ if 
%In this way, we obtain a vector $(n_d)_{d \in \mathcal D_{m+1}}$. {\color{red}What if there do not exist $(s,b)$ with $b>m$? We can let $n_{(s,m+1)}=0$ in this case, but then this does not necessarily coincide with the definition of $\mathcal D_{m+1}$.} {\color{cyan} If there do not exist $(s,b)$ with $b>m$, then $\cD_{m+1}=\cD_m$ so $n_{(s,m+1)}$ does not appear in the product over $d\in\cD_{m+1}$ below. Does this solve the issue or is there still something else?}

\begin{lem}\label{UpperLem2}
    Fix $t \ge 0$, $\varepsilon>0$ and $m\in\N$.
    Let $n$ be large enough so that $\mathcal N_n(m,\varepsilon) \neq \emptyset$, and let $(n_d)_{d \in \mathcal D_m}\in\cN_n$. Then
    \begin{multline*}\sum_{\stackrel{b_1\cdots b_n \in \mathcal B_\omega^n}{\tau_d(\omega, b_1\cdots b_n)=n_d, d \in \mathcal D_m}} \prod_{\stackrel{1 \le \ell \le n}{b_\ell >m}}N_{\omega_\ell, b_\ell}^{-t}\\
        =\left(\prod_{s\in\cS}\left(\sum_{b\in\cB_{s,m}^c}N_{s,b}^{-t}\right)^{\tau_s(\omega,n) - n_s}\right)\frac{\prod_{s\in\cS}\tau_s(\w,n)!}{\prod_{d\in\cD_m}n_d! \prod_{s \in \mathcal S}(\tau_s(\omega,n)-n_s)!}.
    \end{multline*}
\end{lem}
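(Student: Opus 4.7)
The plan is to factorise the sum over sequences $b_1 \cdots b_n$ according to the symbol $s \in \mathcal S$ occurring at each position in $\omega$, and then evaluate each factor by a standard multinomial count. For each $s \in \mathcal S$, let $P_s \defeq \{1 \le \ell \le n : \omega_\ell = s\}$, so that $\# P_s = \tau_s(\omega,n)$ and the sets $P_s$ partition $\{1,\dotsc,n\}$. Because the constraint $\tau_d(\omega, b_1\cdots b_n) = n_d$ only couples $b_\ell$ with $\omega_\ell$, and the summand $\prod_{\ell : b_\ell > m} N_{\omega_\ell,b_\ell}^{-t}$ depends on each $b_\ell$ separately, the whole sum splits as
\[
\sum_{\substack{b_1\cdots b_n \in \mathcal B_\omega^n \\ \tau_d(\omega, b_1\cdots b_n)=n_d,\, d\in \mathcal D_m}} \prod_{\substack{1\le \ell \le n \\ b_\ell > m}} N_{\omega_\ell,b_\ell}^{-t}
= \prod_{s\in\mathcal S}\, \sum_{(b_\ell)_{\ell\in P_s}} \prod_{\substack{\ell\in P_s \\ b_\ell > m}} N_{s,b_\ell}^{-t},
\]
where the inner sum runs over tuples $(b_\ell)_{\ell \in P_s}$ with $b_\ell \in \mathcal B_s$ and $\#\{\ell\in P_s : b_\ell = b\} = n_{(s,b)}$ for every $b \in \mathcal B_{s,m}$.

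Next I would evaluate each inner sum by a two-stage count. First, choose the positions in $P_s$ at which each small digit $b \in \mathcal B_{s,m}$ appears: this is a multinomial coefficient
\[
\binom{\tau_s(\omega,n)}{(n_{(s,b)})_{b\in\mathcal B_{s,m}},\, \tau_s(\omega,n)-n_s} = \frac{\tau_s(\omega,n)!}{\bigl(\prod_{b\in\mathcal B_{s,m}} n_{(s,b)}!\bigr)\, (\tau_s(\omega,n)-n_s)!},
\]
leaving $\tau_s(\omega,n) - n_s$ positions for the large digits $b \in \mathcal B_{s,m}^c$. At each of those remaining positions, the choice of $b_\ell \in \mathcal B_{s,m}^c$ is unconstrained and contributes a weight $N_{s,b_\ell}^{-t}$; summing independently over each such position yields the factor $\bigl(\sum_{b\in\mathcal B_{s,m}^c} N_{s,b}^{-t}\bigr)^{\tau_s(\omega,n)-n_s}$.

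Multiplying the two factors gives the value of the inner sum for each $s$, and taking the product over $s \in \mathcal S$, together with the identity
\[
\prod_{s\in\mathcal S}\prod_{b\in\mathcal B_{s,m}} n_{(s,b)}! = \prod_{d\in\mathcal D_m} n_d!,
\]
yields the claimed formula. The only thing to watch is that the identity should be read in the extended reals: if $\sum_{b\in\mathcal B_{s,m}^c} N_{s,b}^{-t} = +\infty$ for some $s$, then both sides are $+\infty$ (since $\tau_s(\omega,n) - n_s > 0$ for $n$ large, by $(\dagger)$ and the definition of $\mathcal N_n$). I do not anticipate a serious obstacle here; the whole argument is a bookkeeping exercise once the factorisation across $s \in \mathcal S$ is noticed.
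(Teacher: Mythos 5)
Your argument is correct and is in substance the same as the paper's proof: both evaluate the sum by separating the positions carrying digits $b>m$ from those carrying the prescribed digits in $\mathcal D_m$, which produces the factor $\bigl(\sum_{b\in\mathcal B_{s,m}^c}N_{s,b}^{-t}\bigr)^{\tau_s(\omega,n)-n_s}$ from the unconstrained large-digit positions and the multinomial count $\prod_s \tau_s(\omega,n)!/\bigl(\prod_{d}n_d!\prod_s(\tau_s(\omega,n)-n_s)!\bigr)$ from arranging the small digits. The only difference is bookkeeping order — you factorise over the symbols $s\in\mathcal S$ first and count within each fibre $P_s$, whereas the paper first fixes the positions and values of the large digits globally and then counts the arrangements — and both routes yield the identical factorisation.
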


\begin{proof}
    Set $\k\defeq n-\sum_{d\in\cD_m}n_d$. Any string $b_1\dotsb b_n\in\cB_\w^n$ for which $\tau_d(\w,b_1\dotsb b_n)=n_d$ for all $d\in\cD_m$ has exactly $\sum_{d\in\cD_m}n_d$ terms that are at most $m$, and $\kappa$ terms that are larger than $m$. The value of the product
    \begin{equation}\label{q:Ntree}
    \prod_{\substack{1\leq\ell\leq n\\ b_\ell>m}}N_{\w_\ell,b_\ell}^{-t}
    \end{equation}
    depends only on the index and value of the $\kappa$ entries in $b_1\dotsb b_n$ that are larger than $m$.
    
    \medskip
    Fix indices $1\le v_1 < v_2 < \cdots < v_\kappa \le n$ where the digits $b>m$ can occur. This means that for each $1 \le \ell \le \kappa$, we have $\# \mathcal B_{\omega_{v_\ell}} >m$ and $n_{\omega_{v_\ell}} < \tau_{\omega_{v_\ell}} (\omega,n)$.
    In addition, fix values $c_1, \dotsc, c_\kappa$ that these digits can take; so, $c_\ell \in \mathcal B_{\omega_{v_\ell},m}^c$ for each $1 \le \ell \le \kappa$.
    Then for each $b_1 \cdots b_n \in \mathcal B_\omega^n$ that has $b_{v_\ell} = c_\ell$ for each $1 \le \ell \le \kappa$ and $\tau_d(\omega, b_1 \cdots b_n) = n_d$ for each $d \in \mathcal D_m$, we have 
    \[ \prod_{\substack{1\leq\ell\leq n\\ b_\ell>m}}N_{\w_\ell,b_\ell}^{-t} = \prod_{1 \le \ell \le \kappa} N_{\omega_{v_\ell},c_\ell}^{-t}.\]
    The number of words $b_1 \cdots b_n$ with these properties is given by
    \[ \prod_{s \in \mathcal S}\frac{n_s!}{\prod_{b \in \mathcal B_{s,m}} n_{(s,b)}!} = \frac{\prod_{s \in \mathcal S} n_s!}{\prod_{d \in \mathcal D_m} n_d!}.\]
    Therefore, if we let 
    %To prove the lemma, we therefore first group the strings $b_1\dotsb b_n$ according to the positions of the digits larger than $m$ and then further group the strings by the exact values of the digits larger than $m$.That way, the value of the product in \eqref{q:Ntree} will be equal for all strings in the same group.
    $\mathcal V_n$ be the collection of sets $\{v_1, \dotsc,v_\k\}$ of indices with $1\leq v_1<\dotsb<v_\k\leq n$, $\# \mathcal B_{\omega_{v_\ell}} >m$ and $n_{\omega_{v_\ell}} < \tau_{\omega_{v_\ell}} (\omega,n)$ for each $1 \le \ell \le \kappa$, then
    %that indicate the possible positions of terms larger than $m$ in strings $b_1\cdots b_n$ and are compatible with $\omega$. So, an index $v$ can only occur in a collection $\{v_1, \dotsc,v_\k\}$ if $\mathcal B_{\omega_v,m}^c \neq \emptyset$. Note that $\cV_n = \emptyset$ if $\k=0$. Then we have
        \begin{align}\label{CountingSum}
            \sum_{\stackrel{b_1\cdots b_n \in \mathcal B_\omega^n}{\tau_d(\omega, b_1\cdots b_n)=n_d, d \in \mathcal D_m}}\prod_{\substack{1\leq\ell\leq n\\ b_\ell>m}}N_{\w_\ell,b_\ell}^{-t}
                &= \frac{\prod_{s \in \mathcal S} n_s!}{\prod_{d \in \mathcal D_m} n_d!} \sum_{\{ v_1,\dotsc,v_\k\}\in\cV_n}\sum_{\stackrel{c_1,\dotsc,c_\k}{c_\ell\in\cB_{\w_{v_\ell},m}^c}}
                    \prod_{1 \le \ell \le \kappa} N_{\omega_{v_\ell},c_\ell}^{-t}.%\nonumber\\
                %&=\sum_{\{ v_1,\dotsc,v_\k\}\in\cV_n}\sum_{\substack{c_{v_1},\dotsc,c_{v_\k}\\ c_{v_\ell}\in\cB_{\w_{v_\ell},m}^c}}
                 %   \left(\prod_{1\le\ell\le \k}N_{\w_{v_\ell},c_{v_\ell}}^{-t}\right)\sum_{\substack{\langle b_1\dotsb b_n\rangle_\w\in\,\cU_n(n_d)\\ b_{v_\ell}=c_{v_\ell}\,\forall1 \le\ell\le \k}}1.
        \end{align}
    Next, we focus on the inner sum on the right-hand side of \eqref{CountingSum} and show that this is also independent of the choice of positions $\{v_1, \dotsc, v_\kappa\} \in \mathcal V_n$; so, fix $\{v_1,\dotsc,v_{\k}\}\in\cV_n$.
    Observe that
    \begin{align}\label{V2}
        \sum_{\substack{c_1, \dotsc, c_\kappa\\ c_\ell\in\cB_{\w_{v_\ell},m}^c}}\!\prod_{1 \le \ell \le \kappa}N_{\w_{v_\ell},b_{c_\ell}}^{-t}
            &=\sum_{c_1\in\cB_{\w_{v_1},m}^c}\!N_{\w_{v_1},c_1}^{-t}\left(\sum_{c_2\in\cB_{\w_{v_2},m}^c}\!N_{\w_{v_2},c_2}^{-t}
                \left(\dotsb\left(\sum_{c_{\k}\in\cB_{\w_{v_{\k}},m}^c}\!N_{\w_{v_{\k}},c_{\k}}^{-t}\right)\dotsb\right)\right)\nonumber\\
            &=\prod_{1 \le \ell \le \kappa}\sum_{b\in\cB_{\w_{v_\ell},m}^c}N_{\w_{v_\ell},b}^{-t},
    \end{align}
    using independence of the $\k$ sums in the second equality.
    %Note that for $n_{(s,m+1)}$ as defined before the statement of the lemma, it holds that $n_{(s,m+1)} = \# \{ 1 \le i \le \kappa \, : \, \omega_{v_i}=s\}$.
    By collecting like-terms, we get
    \begin{equation}\label{V3}
        \prod_{1 \le \ell \le \k}\sum_{b\in\cB_{\w_{v_\ell},m}^c}N_{\w_{v_\ell},b}^{-t}
            =\prod_{s\in\cS}\prod_{\substack{1\le \ell \le  \k\\ \w_{v_\ell}=s}}\sum_{b\in\cB_{s,m}^c}N_{s,b}^{-t}
            =\prod_{s\in\cS}\left(\sum_{b\in\cB_{s,m}^c}N_{s,b}^{-t}\right)^{\tau_s(\omega,n) -n_s},
    \end{equation}
    where the right-hand side is independent of $\{ v_1, \dotsc, v_\kappa\}$. Therefore, putting \eqref{CountingSum}, \eqref{V2} and \eqref{V3} together yields
    \begin{align}\label{V4}
       \sum_{\stackrel{b_1\cdots b_n \in \mathcal B_\omega^n}{\tau_d(\omega, b_1\cdots b_n)=n_d, d \in \mathcal D_m}}\prod_{\substack{1\leq\ell\leq n\\ b_\ell>m}}N_{\w_\ell,b_\ell}^{-t}
            =\left(\prod_{s\in\cS}\left(\sum_{b\in\cB_{s,m}^c}N_{s,b}^{-t}\right)^{\tau_s(\omega,n)-n_s}\right)
                \frac{\prod_{s\in\cS}n_s!}{\prod_{d\in\cD_m}n_d!}\#\cV_n.
    \end{align}
    It only remains to compute $\#\cV_n$.
    For each $s \in \mathcal S$, there are $\tau_s(\w,n)$ indices $1 \le\ell\le n$ with $\w_\ell=s$ out of which $\tau_s(\omega,n)-n_s$ have $(\w_\ell,b_\ell) = (s,b)$ for some $b >m$. There are $\binom{\tau_s(\w,n)}{n_s}$ possible arrangements of this kind and so
    \begin{align*}
        \#\cV_n=\prod_{s\in\cS}\binom{\tau_s(\w,n)}{n_s}
            &=\frac{\prod_{s\in\cS} \tau_s(\w,n)!}{\prod_{s\in\cS}n_s!\prod_{s\in\cS}(\tau_s(\omega,n) - n_s)!}.
    \end{align*}
    Substituting this into \eqref{V4} gives the result.
\end{proof}

In order to find recognisable quantities later, we shall find the exponential behaviour of some of the terms appearing in Lemmas~\ref{UpperLem1} and \ref{UpperLem2}. Set
\begin{align*}
    f & \big(t;m,\e, n,(n_d)\big)    \\
        &\defeq\frac1n\log\left[\left(\prod_{d\in\cD_m}N_d^{-tn_d}\right)\left(\prod_{s\in\cS}\left(\sum_{b\in\cB_{s,m}^c}N_{s,b}^{-t}\right)^{\tau_s(\omega,n)-n_s}\right)
            \frac{\prod_{s\in\cS}\tau_s(\w,n)!}{\prod_{d\in\cD_m}n_d! \prod_{s \in \mathcal S}(\tau_s(\omega,n)-n_s)!}\right].
\end{align*}
Then $f\big(t;m,\e,n,(n_d)\big)$ can be written as the sum of five terms.
We consider each of these separately.
% \Charlene{The notation $A_{\varepsilon,n}(n_d)$ is a bit strange, since it also depends on $m$ and $t$. Replace by $A$?}
% \medskip
First, set
\[ A\defeq t \sum_{d \in \mathcal D_m} \left( \alpha_d-\frac{n_d} n \right) \log N_d\]
so that
\[ \frac1n\log \left(\prod_{d\in\cD_m}N_d^{-tn_d}\right) = -t \sum_{d \in \mathcal D_m} \alpha_d \log N_d + A. \]
Note for each $c > 0$ and $m \in \mathbb N$ that, by taking $\varepsilon < c/(t \sum_{d \in \mathcal D_m}\log N_d)$ and $N_{\varepsilon}$ to be the value given by Lemma~\ref{l:Nnonempty}, we have for any $n \ge N_{\varepsilon}$ and $(n_d)_{d \in \mathcal D_m} \in \mathcal N_n(m,\varepsilon)$ that $|A|< c$.
% \medskip
Similarly, for the second part, set
\[ B\defeq \sum_{s \in \mathcal S} \left( \frac{\tau_s(\omega,n)-n_s} n - \sum_{b \in \mathcal B_{s,m}^c} \alpha_{(s,b)} \right) \log \sum_{b \in \mathcal B_{s,m}^c} N_{s,b}^{-t}\]
so that
\[
\frac1n \log \left(\prod_{s\in\cS}\left(\sum_{b\in\cB_{s,m}^c}N_{s,b}^{-t}\right)^{\tau_s(\omega,n)-n_s}\right)
= \sum_{s \in \mathcal S} \left( \sum_{b \in \mathcal B_{s,m}^c} \alpha_{(s,b)} \right) \log \sum_{b \in \mathcal B_{s,m}^c} N_{s,b}^{-t} + B.\]
Let $c >0$ and $m \in \mathbb N$. Take $\varepsilon < c/\sum_{s \in \mathcal S} \log \sum_{b \in \mathcal B_{s,m}^c}N_{s,b}^{-t}$. Then using $(\dagger)$ and Lemma~\ref{l:Nnonempty}, we can find an $N\in\N$ such that, for each $n \ge N$ and $(n_d)_{d \in \mathcal D_m} \in \mathcal N_n(m,\varepsilon)$,
\begin{equation}\label{q:biggerm} \left| \frac{\tau_s(\omega,n)-n_s} n - \sum_{b \in \mathcal B_{s,m}^c} \alpha_{(s,b )} \right| \le \left| \frac{\tau_s(\omega,n)} n -\sum_{b \in \mathcal B_s} \alpha_{(s,b)}\right| + \sum_{b \in \mathcal B_{s,m}} \left|  \alpha_{(s,b)} - \frac{n_{(s,b)}} n  \right| < \varepsilon
\end{equation}
and thus $|B|<c$.
% \medskip
For the last three parts, we can write
\[ \begin{split} \frac1n \log \frac{\prod_{s\in\cS}\tau_s(\w,n)!}{\prod_{d\in\cD_m}n_d! \prod_{s \in \mathcal S}(\tau_s(\omega,n)-n_s)!} =\ & \frac1n \sum_{s \in \mathcal S} \log \tau_s(\omega,n)!  - \frac1n \sum_{d \in \mathcal D_m} \log n_d!\\
& - \frac1n \sum_{s \in \mathcal S} \log (\tau_s(\omega,n)-n_s)!.
\end{split}\]
Stirling's approximation $\log x!=x\log x-x+O(\log x)$, $x\to+\infty$, and the fact that
\[ \sum_{d\in\cD_m}n_d + \sum_{s \in \mathcal S} (\tau_s(\omega,n)-n_s)=n=\sum_{s\in\cS}\tau_s(\w,n)\]
together yield
\begin{align*}
    \frac1n \log \frac{\prod_{s\in\cS}\tau_s(\w,n)!}{\prod_{d\in\cD_m}n_d! \prod_{s \in \mathcal S}(\tau_s(\omega,n)-n_s)!}
        &=\sum_{s \in \mathcal S} \frac{\tau_s(\omega,n)} n \log \tau_s(\omega,n)  - \sum_{d \in \mathcal D_m} \frac{n_d}n\log n_d\\
        &\quad\quad - \sum_{s \in \mathcal S} \frac{\tau_s(\omega,n)-n_s}n \log (\tau_s(\omega,n)-n_s) + O\left( \frac{\log n} n \right)\\
        &=\sum_{s \in \mathcal S} \frac{\tau_s(\omega,n)} n \log \frac{\tau_s(\omega,n)} n  - \sum_{d \in \mathcal D_m} \frac{n_d}n\log \frac{n_d} n\\
        &\quad\quad - \sum_{s \in \mathcal S} \frac{\tau_s(\omega,n)-n_s}n \log \frac{\tau_s(\omega,n)-n_s} n + O\left( \frac{\log n} n \right).
\end{align*}
Set
\[ C\defeq \sum_{s \in \mathcal S} \frac{\tau_s(\omega,n)} n \log \frac{\tau_s(\omega,n)} n - \sum_{s \in \mathcal S} \alpha_s \log \alpha_s,\]
and note that $|C|\to0$ as $n\to+\infty$. Set
\[ D\defeq\sum_{d \in \mathcal D_m} \alpha_d \log \alpha_d - \sum_{d \in \mathcal D_m} \frac{n_d} n\log \frac{n_d} n.\]
By uniform continuity of the map $x \mapsto x \log x$ on $[0,1]$ and by Lemma~\ref{l:Nnonempty}, for any $c>0$ and $m \in \mathbb N$, there is an $\varepsilon >0$ and an $N\in\N$ such that, for all $n \ge N$ and all $(n_d)_{d \in \mathcal D_m} \in \mathcal N_n(m,\varepsilon)$, we have $|D| < c$.
% \medskip
Finally, set
\[ E \defeq\sum_{s\in\cS}\left(\sum_{b\in\cB_{s,m}^c}\a_{(s,b)}\right)\log\left(\sum_{b\in\cB_{s,m}^c}\a_{(s,b)}\right)
                    -\sum_{s\in\mathcal S}\frac{\tau_s(\omega,n)-n_s}n\log\frac{\tau_s(\omega,n)-n_s}n. \]
Using \eqref{q:biggerm} and uniform continuity of the map $x \mapsto x \log x$ on $[0,1]$, we find again by Lemma~\ref{l:Nnonempty} that, for any $c>0$ and $m \in \mathbb N$, there is an $\varepsilon >0$ and an $N\in\N$ such that $|E| < c$ for all $n \ge N$ and all $(n_d)_{d \in \mathcal D_m} \in \mathcal N_n(m,\varepsilon)$. The above observations together lead to the following lemma.

\begin{lem}\label{UpperLem3}
    Fix $t \ge 0$.
    Then for any $m\in\N$, $\varepsilon >0$, $n \ge N_{\varepsilon}$ and $(n_d) \in \mathcal N_n (\alpha,m,\varepsilon)$,
    \begin{align*}
        f_n\big(t;m,\e,n,(n_d)\big)
            & = -t\sum_{d\in\cD_m}\a_d\log N_d+\sum_{s\in\cS}\left(\sum_{b\in\cB_{s,m}^c}\a_{(s,b)}\right)\log\sum_{b\in\cB_{s,m}^c}N_{s,b}^{-t}\\
            &\quad\quad+\sum_{s\in\cS}\a_s\log\a_s-\sum_{d\in\cD_m}\a_d\log\a_d
                -\sum_{s\in\cS}\left(\sum_{b\in\cB_{s,m}^c}\a_{(s,b)}\right)\log\left(\sum_{b\in\cB_{s,m}^c}\a_{(s,b)}\right)\\
            &\quad\quad+A+B+C+D+E+O\left(\frac{\log n}n\right).
    \end{align*}
    Moreover, for any $c>0$ and $m\in\N$ there are $\varepsilon >0$ and $\mathfrak n = \mathfrak n(m,\varepsilon)\in\N$ such that, for each $n \ge \mathfrak n$, the set $\mathcal N_n(m,\varepsilon)$ is non-empty and, for each $(n_d)_{d \in \mathcal D_m} \in \mathcal N_n(m,\varepsilon)$,
    \[ |A + B + C +  D + E|<c.\]
\end{lem}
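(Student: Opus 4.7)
The plan is to obtain the displayed formula by collecting the three expansions carried out in the paragraphs immediately preceding the lemma statement. By definition, $f_n(t;m,\e,n,(n_d))$ is the sum of three logarithms, one per factor inside the brackets. The first two identities \en the expansion of $\frac1n\log\prod_{d\in\cD_m}N_d^{-tn_d}$ as $-t\sum_{d\in\cD_m}\a_d\log N_d + A$ and the analogous one for the middle factor with error $B$ \en are simply the definitions of $A$ and $B$. For the factorial factor I would apply Stirling's formula $\log x! = x\log x - x + O(\log x)$ to each of the (finitely many) factorials, note that the linear-in-$x$ terms cancel via the identity $\sum_{d\in\cD_m}n_d + \sum_{s\in\cS}(\tau_s(\w,n)-n_s) = n = \sum_{s\in\cS}\tau_s(\w,n)$, and then rewrite each $x\log x$ as $x\log(x/n) + x\log n$ so that a second application of this identity eliminates the $x\log n$ contributions. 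What remains are precisely the target entropy-type sums plus $C + D + E + O((\log n)/n)$.

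For the smallness assertion, fix $c>0$ and $m\in\N$; I would bound each of $|A|,|B|,|C|,|D|,|E|$ individually by $c/5$ and use the triangle inequality. The bound $|A|<c/5$ holds once $\e$ is smaller than an explicit constant depending on $t$ and $\sum_{d\in\cD_m}\log N_d$. The bound $|D|<c/5$ follows directly from the uniform continuity of $x\mapsto x\log x$ on $[0,1]$ applied to $|n_d/n-\a_d|<\e$. The bounds $|B|<c/5$ and $|E|<c/5$ require $(\dagger)$, used through \eqref{q:biggerm}, which for sufficiently large $n$ provides $|(\tau_s(\w,n)-n_s)/n - \sum_{b>m}\a_{(s,b)}|<\e$; combining this with uniform continuity (for $E$) or with boundedness of the log factor that depends on $m$ but not on $n$ (for $B$) then yields the bound. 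Finally, $|C|<c/5$ is immediate from $\tau_s(\w,n)/n\to\a_s$ given by Theorem~\ref{Spectrum}, so it holds for $n$ large. Taking $\e$ to be the minimum of all the required thresholds and $\mathfrak n$ large enough both so that $\cN_n(m,\e)\neq\emptyset$ (by Lemma~\ref{l:Nnonempty}) and so that all $n$-dependent estimates apply then completes the argument.

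I do not anticipate any real obstacle: this lemma is a bookkeeping statement that consolidates the individual estimates already spelled out in the text into the single form needed in Section~\ref{UpperBoundSec}. The only care required is the order of quantifiers \en fix $m$ first, then choose $\e$ depending on $m$, then choose $n$ depending on both \en since the constants appearing in the bounds for $A$ and $B$ grow with $m$ and since the thresholds from \eqref{q:biggerm} depend on $\e$. This dependence is unproblematic because the lemma only asserts the bound for each fixed $m$ separately.
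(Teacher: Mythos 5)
Your proposal is correct and follows essentially the same route as the paper: the displayed identity is exactly the assembly of the definitions of $A$ and $B$ with the Stirling expansion of the multinomial factor (where the linear terms and the $\log n$ contributions cancel via $\sum_{d\in\cD_m}n_d+\sum_{s\in\cS}(\tau_s(\w,n)-n_s)=n=\sum_{s\in\cS}\tau_s(\w,n)$), and the second assertion is the triangle inequality applied to the individual bounds on $|A|,\dots,|E|$ with the quantifiers ordered as you describe ($m$ first, then $\e$, then $n$). No gaps beyond what the paper itself leaves implicit.
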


We are now ready to prove the upper bound in Theorem~\ref{dimH}.
\begin{proof}[Proof of Proposition~\ref{p:dimHUpper}]
    Put $t_0\defeq\max\{\eta_\cT,\b_\cT(\a)\}$.
    To show that $\dim_HF_{\cT,\w}(\a)\leq t_0$, it suffices to show that $\cH^{t_0}\big(F_{\cT,\w}(\a)\big)=0$. Observe that for any fixed $\e>0$ and $m\in\N$,
    $$F_{\cT,\w}(\a)\subset\bigcup_{k_0\in\N}\bigcap_{k\geq k_0}H_k(\a,m,\e).$$
    We first show, using the lemmas above, that for any $k_0\in\N$ and appropriate $m\in\N$ and $\varepsilon >0$,
    \begin{equation}\label{Ht=0}
        \cH^{t_0}\left(\bigcap_{k\geq k_0}H_k(\a,m,\e)\right)=0.
    \end{equation}
    So, fix $k_0\in\N$. Let $\d>0$ and set $t(\d)\defeq t_0+5\d$. Then for any $m\in\N$ and $\varepsilon >0$, the Lemmas~\ref{UpperLem1}, \ref{UpperLem2} and \ref{UpperLem3} give
    \begin{equation}\label{HtTerms}
        \cH^{t(\d)}\left(\bigcap_{k\geq k_0}H_k\right)
            %&\hspace{-2cm}\leq\liminf_{n\to+\infty}\sum_{(n_d)\in\cN_n}\left(\prod_{d\in\cD_m}N_d^{-t(\d)n_d}\right)     \sum_{\langle b_1\dotsb b_n\rangle_\w\in\,\cU_n(n_d)}\prod_{\substack{1\leq\ell\leq n\\ b_\ell>m}}N_{\w_\ell,b_\ell}^{-t(\d)}\nonumber\\
           % &\hspace{-2cm}\leq\liminf_{n\to+\infty}\sum_{(n_d)\in\cN_n}\left(\prod_{d\in\cD_m}N_d^{-t(\d)n_d}\right)\!\left(\prod_{s\in\cS}\left(\sum_{b\in\cB_{s,m}^c}N_{s,b}^{-t(\d)}\right)^{\tau_s(\omega,n)-n_s}\right)\frac{\prod_{s\in\cS}\tau_s(\w,n)!}{\prod_{d\in\cD_m}n_d!\prod_{s \in \mathcal S} (\tau_s(\omega,n)-n_s)!}\nonumber\\
            \le \liminf_{n\to+\infty}\sum_{(n_d)\in\cN_n}\exp nf_n\big(t(\d);m,\e,n,(n_d)\big).
    \end{equation}
    To prove \eqref{Ht=0}, it suffices to find $\e>0$ and $m\in\N$ for which the right-hand side of \eqref{HtTerms} equals $0$.
    We start, for suitable $n$, by bounding $f_n(t(\d);m,\e,n,(n_d))$ above by a negative quantity independent of the choice of $(n_d)\in\cN_n(m,\e)$.
    By definition of $\b_\cT(\a)$, there are infinitely many $m\in\N$ such that
    $$\left\lvert\frac{\sum_{s\in\cS}\a_s\log\a_s-\sum_{d\in\cD_m}\a_d\log\a_d}{\sum_{d\in\cD_m}\a_d\log N_d}-\b_\cT(\a)\right\rvert<\d.$$
    For any such $m$,
    \begin{equation}\label{fTerm1}
        \sum_{s\in\cS}\a_s\log\a_s-\sum_{d\in\cD_m}\a_d\log\a_d\leq(\b_\cT(\a)+\d)\sum_{d\in\cD_m}\a_d\log N_d\leq(t_0+\d)\sum_{d\in\cD_m}\a_d\log N_d.
    \end{equation}
    
    % \Charlene{If $\mathcal B_{s,m}^c$ decreases, then $\sum_{b \in \mathcal B_{s,m}^c}\alpha_{(s,b)}$ approaches 0 and thus the lhs increases. (4.11) is not clear to me.}
    % \JI{$t\log t\to 0$ as $t\to0$ so each summand approaches zero} 
    
    Next, $\mathcal B_{s,m}^c \supseteq \mathcal B_{s,m+1}^c$ for each $m$ and $\bigcap_{m \in \mathbb N} \mathcal B_{s,m}^c = \emptyset$, so for each $s\in\cS$, $\sum_{b\in\cB_{s,m}^c}\a_{(s,b)}\to0$ as $m\to+\infty$.
    Since $-x\log x\to0$ as $x\to0$, and since $\mathcal D_m \subseteq \mathcal D_{m+1}$, we thus find for any large enough $m\in\N$ that
    \begin{equation}\label{fTerm2}
        -\sum_{s\in\cS}\left(\sum_{b\in\cB_{s,m}^c}\a_{(s,b)}\right)\log\left(\sum_{b\in\cB_{s,m}^c}\a_{(s,b)}\right)<\d\sum_{d\in\cD_m}\a_d\log N_d.
    \end{equation}
    In addition, since $t(\d)>\eta_\cT$, it follows that for any $s\in\cS$,
    $$\lim_{m \to +\infty} \sum_{b\in\cB_{s,m}^c}N_{s,b}^{-t(\d)}=0.$$
    Hence, for any large enough $m\in\N$ we have
    \begin{equation}\label{fTerm3}
        \sum_{s\in\cS}\left(\sum_{b\in\cB_{s,m}^c}\a_{(s,b)}\right)\log\sum_{b\in\cB_{s,m}^c}N_{s,b}^{-t(\delta)}<\d\sum_{d\in\cD_m}\a_d\log N_d.
    \end{equation}
    This means that there are infinitely many $m\in\N$, such that
    \[\begin{split}
    -t(\delta) & \sum_{d\in\cD_m}\a_d\log N_d+\sum_{s\in\cS}\left(\sum_{b\in\cB_{s,m}^c}\a_{(s,b)}\right)\log\sum_{b\in\cB_{s,m}^c}N_{s,b}^{-t(\delta)}+\sum_{s\in\cS}\a_s\log\a_s-\sum_{d\in\cD_m}\a_d\log\a_d\\
    &-\sum_{s\in\cS}\left(\sum_{b\in\cB_{s,m}^c}\a_{(s,b)}\right)\log\left(\sum_{b\in\cB_{s,m}^c}\a_{(s,b)}\right)\\
    \le\ & (-t(\delta) + t_0 + \delta + \delta + \delta)\sum_{d\in\cD_m}\a_d\log N_d = -2\delta \sum_{d\in\cD_m}\a_d\log N_d.
    \end{split}\]
    Fix such an $m$. Then by Lemma~\ref{UpperLem3} there is an $\e>0$ and an $\fn=\fn(m,\e)\in\N$ such that for all $n\geq\fn$ the set $\cN_n(m,\e)$ is non-empty and for all $(n_d)\in\cN_n(m,\e)$, 
    \begin{equation}\label{fTermABCD}
        A+B+C+D+E<\d\sum_{d\in\cD_m}\a_d\log N_d.
    \end{equation}
     %   Then for the infinitely many $m\in\N$ such that the equations~\eqref{fTerm1}, \eqref{fTerm2} and \eqref{fTerm3} hold and the $\e>0$ and $\fn\in\N$ such that \eqref{fTermABCD} holds,   we obtain
    Thus for all $n \ge \mathfrak n$,
     $$f_n\big(t(\d);m,\e,n,(n_d)\big)\leq-\d\sum_{d\in\cD_m}\a_d\log N_d+O\left(\frac{\log n}n\right),$$ which is negative for large enough $n\in\N$.
    Substituting this into \eqref{HtTerms} and noting that $$\#\cN_n(m,\e)\leq\sum_{d\in\cD_m}\bigl(n(\a_d+\e)-n(\a_d-\e)\bigr)=\#\cD_m\cdot2\e n$$ for all $n\in\N$ yields
    \begin{align}
        0\leq\cH^{t(\d)}\left(\bigcap_{k\geq k_0}H_k\right)
            &\leq\liminf_{n\to+\infty}\#\cN_n\cdot\exp\left(-\d n\sum_{d\in\cD_m}\a_d\log N_d+O(\log n)\right)\nonumber\\
            &\leq\lim_{n\to+\infty}\#\cD_m\cdot2\e n\exp\left(-\d n\sum_{d\in\cD_m}\a_d\log N_d+O(\log n)\right)=0.\label{HtBound}
    \end{align}
    Since this holds for all $k_0\in\N$ we find that
    $$\dim_H F_{\cT,\w}(\a)\le\dim_H\left(\bigcup_{k_0\in\N}\bigcap_{k\geq k_0}H_k\right)=\sup_{k_0\in\N}\dim_H\left(\bigcap_{k\geq k_0}H_k\right)\le t_0+5\d.$$
    As this holds for any $\delta>0$, we may take $\d\to0$ to obtain $\dim_HF_{\cT,\w}(\a)\le\max\{\eta_\cT,\b_\cT(\a)\}$.
\end{proof}

\section{Lower Bound: Exponent of Convergence}\label{sec:Eta}

Let $\cT$ be a collection of GLS IFSs, indexed by a finite set $\cS$, that satisfies \eqref{q:limexists}, i.e.\ such that the limit
$$\lim\limits_{n\to+\infty}\frac{\log n}{\log N_{s,n}}$$ exists for all $s\in\cS_\N$.
In this section, we will prove that $\dim_HF_{\cT,\w}(\a)\geq \eta_\cT$ for any frequency vector $\a=(\a_d)_{d\in\cD}$ satisfying $(\dagger)$ and any $\w\in\W_\cT(\a)$. If $\eta_\cT=0$, then this holds trivially; therefore, we assume hereafter that $\eta_\cT>0$, which implies that $\cS_\N\neq\emptyset$, i.e.\ that $\cT$ contains at least one system $\mathcal T_s$ with $\mathcal B_s = \mathbb N$. We prove the following proposition.

\begin{prop}\label{p:dimHLowerEta}
    Let $\a=(\a_d)_{d\in\cD}$ be a frequency vector satisfying $(\dagger)$, and fix $\w\in\W_\cT(\a)$.
    Assume that $\eta_\mathcal T >0$ and that \eqref{q:limexists} holds.
    Then $\dim_HF_{\cT,\w}(\a)\geq\eta_\cT$.
\end{prop}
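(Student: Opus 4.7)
The plan is to apply Lemma~\ref{l:Pes}: for every $\eta'<\eta_\cT$ we construct a Borel probability measure $\mu=\mu_{\eta'}$ supported on $F_{\cT,\w}(\a)\cap X_\w$ with $\underline{d}_\mu(x)\geq\eta'$ for $\mu$-a.e.\ $x$, so that $\dim_H F_{\cT,\w}(\a)\geq\dim_H\mu\geq\eta'$; letting $\eta'\to\eta_\cT$ will then conclude the proof. Because $\eta_\cT>0$ we may pick $s^*\in\cS_\N$ with $\eta(\mathcal T_{s^*})=\eta_\cT$, and assumption \eqref{q:limexists} then gives $\log N_{s^*,b}\sim\log b/\eta_\cT$ as $b\to+\infty$. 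By Theorem~\ref{Spectrum}, fix a reference digit sequence $(b_n^*)_{n\in\N}\in E_\w(\a)$.

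To build $\mu$, we select along the positive-density index set $\{n:\w_n=s^*\}$ a pairwise disjoint sequence of \emph{perturbation blocks} $J_1<J_2<\dotsb$ of sizes $|J_k|\to+\infty$ whose union $J=\bigcup_k J_k$ has density zero in $\N$, and to each $J_k$ we attach a window of digits $W_k=\{M_k+1,\dotsc,M_k+L_k\}\subset\cB_{s^*}$ with $M_k,L_k\to+\infty$ and $\log L_k/\log M_k\to1$. Define $\mu$ as the pushforward under $\pi_\w$ of the infinite product measure on $\cB_\w^\N$ whose $n$-th marginal is the Dirac mass at $b_n^*$ for $n\notin J$ and the uniform distribution on $W_k$ for $n\in J_k$. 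Because $J$ has density zero in $\N$, the frequency of every fixed digit $d\in\cD$ is $\mu$-a.s.\ the same as along $(b_n^*)_n$ (a density-zero perturbation cannot change digit frequencies, whatever is put on it), namely $\a_d$; hence $\mu$ is supported on $F_{\cT,\w}(\a)\cap X_\w$.

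For a $\mu$-typical $x$ and the level-$n$ FFI $C_n(x)$ containing it, the product structure of $\mu$ yields $-\log\mu(C_n(x))=\sum_k|J_k\cap[1,n]|\log L_k$, while the strong law of large numbers on each window together with \eqref{q:limexists} gives
\[
-\log|C_n(x)|=\sum_{i\leq n,\,i\notin J}\log N_{\w_i,b_i^*}+\frac{1+o(1)}{\eta_\cT}\sum_k|J_k\cap[1,n]|\log M_k.
\]
A schedule such as $n_k=2^k$, $|J_k|=n_k/\log\log n_k$, $M_k=L_k=n_k$ keeps $J$ of density zero yet lets the window Lyapunov term dominate the bulk one (in both cases $\sum_d\a_d\log N_d<+\infty$ and $=+\infty$), forcing the ratio $-\log\mu(C_n(x))/(-\log|C_n(x)|)\to\eta_\cT$; a slight adjustment of the schedule delivers $\geq\eta'$. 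Passing from cylinders to balls is routine thanks to the OSC and the affinity of the maps: since all siblings inside a window have contraction ratios whose logarithms agree up to a $1+o(1)$ factor, a ball $B(x,r)$ with $|C_{n+1}(x)|\leq r<|C_n(x)|$ meets at most $O(1+r/|C_{n+1}(x)|)$ level-$(n+1)$ FFIs of comparable diameter, and the resulting estimate propagates the cylinder bound to give $\underline{d}_\mu(x)\geq\eta'$ for $\mu$-a.e.\ $x$.

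The main obstacle is the joint choice of $(|J_k|,M_k,L_k)$: the blocks $J_k$ are constrained to have density zero in $\N$, yet must generate a Lyapunov mass commensurate with or dominating the linear-in-$n$ bulk contribution, which forces $M_k$ to grow quickly. Hypothesis \eqref{q:limexists} enters essentially here, ensuring $|J_k|^{-1}\sum_{i\in J_k}\log N_{s^*,b_i(x)}\sim\eta_\cT^{-1}\log M_k$ as $k\to+\infty$, which is what drives the entropy-to-Lyapunov ratio on the windows to exactly $\eta_\cT$ rather than to something strictly smaller.
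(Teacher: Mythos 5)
Your blueprint coincides with the paper's own strategy (perturb a frequency-realising background sequence on a density-zero set of positions with $\w_n=s^*$, spread uniform mass over a digit window whose log-cardinality is asymptotically the full $\log M_k$, and compare entropy against the Lyapunov sum), but two steps that you treat as routine are genuine gaps. First, you never control the bulk term $\sum_{i\le n,\,i\notin J}\log N_{\w_i,b_i^*}$. A sequence in $E_\w(\a)$ obtained merely from Theorem~\ref{Spectrum} has no bound on the sizes of its digits, so its Lyapunov partial sums can grow far faster than linearly; and when $\sum_{d\in\cD}\a_d\log N_d=+\infty$ they are necessarily superlinear, of order up to $n\log n$ even for a background with $b_n^*\le n$. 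With your concrete schedule ($n_k=2^k$, $|J_k|=n_k/\log\log n_k$, $M_k=L_k=n_k$) the cumulative window entropy and window Lyapunov term up to $n\approx 2^K$ are only of order $n\log n/\log\log n$, which does \emph{not} dominate such a bulk; the ratio then tends to $0$, not to $\eta_\cT$, so the claim that this schedule works ``in both cases'' is false. The paper resolves exactly this point by constructing a controlled background $a$ with $a_n\le n$ (Lemma~\ref{l:z}), giving bulk $o(\k(n)^2)$, and by spacing the perturbation positions as $\th(k)\approx k^\gamma$ with $\gamma<2$, so the window terms grow like $n^{2/\gamma}$, polynomially faster than the bulk. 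Some such quantitative coordination (or an adaptive choice of $M_k$ made after inspecting the background's partial sums) is indispensable and is missing from your argument.

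Second, the passage from cylinders to balls is not routine in the countable-alphabet setting. A level-$n$ FFI has countably many children of wildly different lengths, so a ball of radius $r$ can meet infinitely many level-$(n+1)$ FFIs, and it can also protrude into a neighbouring level-$n$ FFI of much larger diameter and mass; your bound ``$O(1+r/|C_{n+1}(x)|)$ FFIs of comparable diameter'' addresses neither issue, and at non-window positions the relevant siblings carry zero mass only inside $C_n(x)$, not across its boundary. The paper circumvents this with a Billingsley-type lemma (Lemma~\ref{Billingsley}), resting on Cajar/Wegmann, whose hypothesis is the verification of $\log|\langle b_1\dotsb b_{n+1}\rangle_\w|/\log|\langle b_1\dotsb b_n\rangle_\w|\to1$ as in \eqref{q:Wegmann} --- and that verification again uses the controlled background of Lemma~\ref{l:z} together with \eqref{q:limexists}. (You also assert without proof that $\mu$ is carried by $X_\w$; the paper has to modify its background sequence to guarantee this.) Your construction can be repaired along these lines, but as written both the domination of the bulk and the cylinder-to-ball transfer are gaps rather than details.
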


Throughout this section, we fix a frequency vector $\a=(\a_d)_{d\in\cD}$ satisfying $(\dagger)$, an $\w\in\W_\cT(\a)$ and a $\vs\in\cS_\N$ such that
\begin{equation}\label{q:etaTs}
    \eta_\cT=\eta(\mathcal T_\vs)>0.
\end{equation}
To prove Proposition~\ref{p:dimHLowerEta}, we first prove several auxiliary lemmas.
\medskip

Observe that $\w\in\W_\cT(\a)$ together with the non-redundancy condition $(\dagger)$ on $\a$ implies that $\tau_\vs(\w)=\a_\vs>0$ so $\w_\ell=\vs$ for infinitely many $\ell\in\N$.
Order these indices $j_1<j_2<\dotsb<j_k<\dotsb$.
Fix $\gamma\in(1,2)$, and define a (strictly increasing) function $\th:\N\to\N$ by $\th(k)\defeq j_{\lceil k^\gamma\rceil}$.
Recall that $\tau_\vs(\w,k)$ denotes the number of occurrences of the symbol $\vs$ in the first block of length $k$ of symbols in the sequence $\w$.
Since $\tau_\vs(\w)>0$, we have $$j_k=\frac{\tau_\vs(\w,j_k)}{\tau_\vs(\w)}\big(1+o(1)\big)=\frac k{\tau_\vs(\w)}\big(1+o(1)\big),\q k\to+\infty.$$
Therefore, $\th$ has the following properties:
\begin{itemize}[leftmargin = 1.5cm]
    \item[($\Th1$)] $\w_{\th(k)}=\vs$ for all $k\in\N$;
    % \item[($\Th2$)] $\th(k)/k\to+\infty$ as $k\to+\infty$.
    \item[($\Th2$)] $\th(k)=\frac{k^\gamma}{\tau_\vs(\w)}\big(1+o(1)\big)$ as $k\to+\infty$ with $1<\gamma<2$.
\end{itemize}

To prove Proposition~\ref{p:dimHLowerEta} we will alter part of the proof of \cite[Theorem~1.2]{FLMW10} to work in our setting. Throughout this section, fix $\e,\d\in(0,1)$.
Let $\k_1(\e,\d)\in\N$ be such that
\begin{equation}\label{q:M1}
    2^k-2^{\d k}>2^{(1-\e)k},\q\forall k\geq\k_1(\e,\d).
\end{equation}
Recall that the contraction ratio $N_{s,b}^{-1}$ of the maps $f_{s,b}$ decreases when $b \in \mathcal B_s$ increases.
By \cite[p26]{PS72} and \eqref{q:limexists} we then have that
\begin{equation}\label{q:etalim}
    \eta(T_s)=\limsup_{n\to+\infty}\frac{\log n}{\log N_{s,n}}=\lim_{n\to+\infty}\frac{\log n}{\log N_{s,n}}.
\end{equation}
Thus, since $\cS_\N$ is a finite set, there is a $\k_2(\e,\d)\in\N$ such that
\begin{equation}\label{q:M2}
    N_{s,n}^{\eta(\mathcal T_s)}\le n^{1+\e},\q\forall s\in\cS_\N,\,n\geq\k_2(\e,\d).
\end{equation}
Fix $\k\geq\max\{\k_1(\e,\d),\k_2(\e,\d)\}$. In the next lemma we construct a sequence $a\in E_\w(\a)$ such that from some moment onwards all digits of $a$ correspond to contractions of GLS IFSs of $\cT$ with sufficiently large contraction ratios.  Write $\eta\defeq\min\{\eta(\mathcal T_s):s\in\cS_\N\}\in(0,1]$ and for $n \in \mathbb N$ set $\kappa(n) \defeq \min\{ k \ge \kappa \, : \, \theta(k)> n \}$.

\begin{lem}\label{l:z}
There is a sequence $a=(a_n)_{n \in \mathbb N}\in E_\omega (\alpha)$ with $\pi_\omega(a) \in X_\omega$ and for which there exists a constant $N_a\in\N$ (depending on $\e$ and $\d$) such that $N_{\w_n,a_n}\leq n^{(1+\e)/\eta}$ for all $n\geq N_a$. Furthermore, as $n\to+\infty$,
\[\sum_{1\leq\ell\leq n}\log N_{\w_\ell,a_\ell}=o(\k(n)^2).\]
\end{lem}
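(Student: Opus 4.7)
The plan is to refine the construction from the proof of Theorem~\ref{Spectrum} so as to control the sizes of the digits being used. For each $s\in\cS_\N$ I will build a sequence $(b_{s,n})_{n\in\N}\in\N^\N$ whose digit frequencies equal $(\a_{(s,b)}/\a_s)_{b\in\N}$ and which additionally satisfies $b_{s,n}\leq n$ for all $n$ sufficiently large. A concrete way to secure this is a block construction: partition $\N$ into consecutive blocks $I_m$ of lengths $L_m$ that grow sufficiently quickly (say $L_m=2^m$), and on block $I_m$ use only digits $b\in\{1,\dotsc,m\}$, assigning digit $b$ to approximately $L_m\a_{(s,b)}/\sum_{b'\leq m}\a_{(s,b')}$ positions. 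Since the smallest index in $I_m$ is at least $m$, this automatically gives $b_{s,n}\leq m\leq n$ for $n\in I_m$, and since $\sum_{b'\leq m}\a_{(s,b')}/\a_s\to1$ as $m\to+\infty$, the limiting frequency of each digit $b$ equals $\a_{(s,b)}/\a_s$. For $s\in\cS\setminus\cS_\N$, $\cB_s$ is finite and any sequence with the prescribed frequencies will do. Setting $a_n\defeq b_{\w_n,\tau_{\w_n}(\w,n)}$ then gives an element of $E_\w(\a)$ by the same argument as in the proof of Theorem~\ref{Spectrum}.

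For the pointwise bound, when $\w_n=s\in\cS_\N$ the construction yields $a_n\leq\tau_s(\w,n)\leq n$, and combining this with \eqref{q:M2} and $\eta(\mathcal T_s)\geq\eta$ gives
\[
N_{\w_n,a_n}\leq a_n^{(1+\e)/\eta(\mathcal T_s)}\leq n^{(1+\e)/\eta}.
\]
When $\w_n\in\cS\setminus\cS_\N$, the values $N_{\w_n,b}$ are uniformly bounded on the finite set $\cB_{\w_n}$, so the inequality holds trivially for $n$ large. Taking the maximum of the thresholds for the finitely many $s\in\cS$ produces the required $N_a$.

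For the sum bound the pointwise estimate gives
\[
\sum_{1\leq\ell\leq n}\log N_{\w_\ell,a_\ell}\leq\frac{1+\e}{\eta}\sum_{1\leq\ell\leq n}\log\ell+O(1)=O(n\log n).
\]
From property $(\Th2)$, $\th(k)\sim k^\gamma/\tau_\vs(\w)$, so inverting the asymptotic for $\th$ gives $\k(n)\sim(n\tau_\vs(\w))^{1/\gamma}$ and hence $\k(n)^2\asymp n^{2/\gamma}$. Since $1<\gamma<2$ we have $2/\gamma>1$, and therefore $n\log n=o(n^{2/\gamma})=o(\k(n)^2)$, as required. To arrange $\pi_\w(a)\in X_\w$, note that $[0,1]\setminus X_\w$ is a countable set (the FFI endpoints together with the countable preimage of $\{0,1\}$), so if the constructed $a$ happens to project outside $X_\w$ I modify $a$ on a density-zero subset of positions to land in $X_\w$; such a perturbation preserves the frequencies as well as the growth and sum bounds.

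The main obstacle is the first step: producing a digit sequence on $\N$ with prescribed, possibly slowly decaying, frequencies \emph{and} the linear size bound $b_{s,n}\leq n$. The heuristic that large digits must be rare by normalization is correct, but one must tune the block construction so that the truncation to digits $\leq m$ on $I_m$ still lets each frequency converge to its target in the limit. Once this combinatorial core is in place, the remaining assertions are elementary asymptotics relying on \eqref{q:M2} and $(\Th2)$.
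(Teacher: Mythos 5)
Your skeleton is the same as the paper's: build, for each $s\in\cS$, a sequence in $\cB_s^\N$ with frequencies $\a_{(s,b)}/\a_s$ and digits growing at most linearly, weave these along $\w$ as in the proof of Theorem~\ref{Spectrum}, bound $N_{\w_n,a_n}$ via \eqref{q:M2}, and finish with $n\log n=o(\k(n)^2)$ from $(\Th2)$. The difference is in the combinatorial core, which the paper does not re-prove but imports from \cite[Lemma 2.1]{FLMW10} (a sequence with the prescribed frequencies and $b^{(s)}_{\tau_s(\w,n)}\le n$ for all $n$). Your replacement block construction has a genuine gap as stated: with $L_m=2^m$ each block has length comparable to the total length of all previous blocks, so prescribing only \emph{how many} positions of block $I_m$ carry each digit $b\le m$ does not force the running frequencies to converge along prefixes ending mid-block. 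For instance, if all copies of digit $1$ are placed at the start of each block and $\a_{(s,1)}/\a_s=\tfrac12$, the running frequency of $1$ climbs to roughly $\tfrac23$ inside every block, so no limit exists. You must either distribute the digits essentially proportionally within each block, or choose block lengths with $L_m=o(L_1+\dotsb+L_{m-1})$, or simply invoke the cited lemma.

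Two smaller points. First, the chain $N_{\w_n,a_n}\le a_n^{(1+\e)/\eta(\cT_{\w_n})}$ applies \eqref{q:M2} at the index $a_n$, which is only justified when $a_n\ge\k_2(\e,\d)$; for small $a_n$ the displayed intermediate inequality can fail even though the conclusion is true. The clean fix is the paper's: use the monotonicity $N_{\w_n,a_n}\le N_{\w_n,n}$ (valid since $a_n\le n$ and the $N_{s,b}$ are nondecreasing in $b$) and apply \eqref{q:M2} at index $n\ge\k$. Second, the passage to $X_\w$ is asserted rather than argued: changing digits on a density-zero set is the right idea, but you must exhibit a modification that provably lands in $X_\w$ while keeping $a_n\le n$. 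The only obstruction is that some tail of $a$ coincides with the $(\cT,\sigma^k(\w))$-expansion of $0$ or $1$; the paper removes it by replacing $a_n$ at the perfect-square positions by $\min\,(\cB_{\w_n}\setminus\{a_n\})\in\{1,2\}$, which breaks any such coincidence for every tail, preserves $a_n\le n$ and the frequencies, and hence also your growth and sum bounds. With these repairs your argument goes through and is essentially the paper's proof, with the citation replaced by an explicit (and fixable) construction.
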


\begin{proof}
    \cite[Lemma 2.1]{FLMW10} yields for each $s\in\cS$ a sequence $(b_n^{(s)})_{n\in\N}\in \cB_s^\N$ such that
    \begin{equation}\label{q:z1}
        \lim_{n\to+\infty}\frac{\#\big\{1\leq\ell\leq n:b_\ell^{(s)}=b\big\}}n=\frac{\a_{(s,b)}}{\a_s},\q\forall\, b\in\cB_s,
    \end{equation}
    and $b_{\tau_s(\w,n)}^{(s)}\leq n$ for all $n\in\N$. We weave the sequences $(b_n^{(s)})_{n\in\N}$, $s\in\cS$, together according to $\w$ as we did in the proof of Theorem~\ref{Spectrum}, to obtain a sequence $\tilde a=(\tilde a_n)_{n\in\N}\in \mathcal B_\omega^\N$ by setting $\tilde a_n\defeq b_{\tau_{\w_n}(\w,n)}^{(\w_n)}$. Then $\tilde a_n \leq n$ for all $n\in\N$ and
    $$\lim_{n\to+\infty}\frac{\#\{1\leq\ell\leq n:(\w_\ell,\tilde a_\ell)=(s,b)\}}n=\frac{\a_{(s,b)}}{\a_s},\q\forall\,(s,b)\in\cD.$$
    If $\pi_\omega((\tilde a_n)_{n \in \mathbb N}) \in X_\omega$, then we take $a = \tilde a$. If this is not the case, then
\[ \pi_\omega((\tilde a_n)_{n \in \mathbb N}) \in \left(\bigcup_{n \in \mathbb N} \bigcup_{b_1 \cdots b_n \in \mathcal B_\omega^n} f_{\omega_1,b_1} \circ \cdots \circ f_{\omega_n,b_n}(\{0,1\}) \right) \setminus \{0,1\}.\]
This implies that there is an $N \in \mathbb N$ such that for each $k \ge N$ the tail $(\tilde a_{n+k})_{n \in \mathbb N}$ equals the $(\mathcal T, \sigma^k(\omega))$-expansion of 0 or 1. For any $k \in \mathbb N$ the $(\mathcal T, \sigma^k(\omega))$-expansions of 0 and 1, if they exist, either have the same tail or all digits are different. We then take $a= (a_n)_{n \in \mathbb N}$ to be the sequence given by
    \[ a_n = \begin{cases}
    \min \, (\mathcal B_{\omega_n}\setminus \{\tilde a_n\}), & \text{if } \sqrt n \in \mathbb N,\\
    \tilde a_n, & \text{otherwise.}
    \end{cases}\]
These modifications to $\tilde a$ imply that $\pi_{\sigma^k(\omega)}((a_{n+k})_{n \in \mathbb N}) \not \in \{0,1\}$ for all $k \in \mathbb N$ and thus we obtain $\pi_\omega((a_n)_{n \in \mathbb N}) \in X_\omega$. Moreover, $\tau_d((\omega_n, a_n)_{n \in \mathbb N})= \tau_d((\omega_n, \tilde a_n)_{n \in \mathbb N})$ for each $d \in \mathcal D$, so $(a_n)_{n \in \mathbb N} \in E_\omega(\alpha)$. Finally, if $a_n \neq \tilde a_n$, then $a_n \in \{1,2\}$ and thus $a_n\leq n$ for all $n \in \mathbb N$. Therefore, we can always find a sequence $a=(a_n)_{n \in \mathbb N} \in E_\omega(\alpha)$ with $\pi_\omega((a_n)_{n \in \mathbb N}) \in X_\omega$ and $ a_n\leq n$ for all $n \in \mathbb N$.
    \medskip
    
    Now, since $\#\cB_s<+\infty$ for each $s\in\cS\setminus\cS_\N$, we may define $M\defeq\max\{N_{s,b}:b\in\cB_s,s\in\cS\setminus\cS_\N\}$.
    Fix $n\geq N_a\defeq\max\{M,\k\}\in\N$.
    If $\w_n\in\cS_\N$, then $\log N_{\w_n,a_n}\leq\log N_{\w_n,n}\leq\frac{1+\e}{\eta(\mathcal T_{\w_n})}\log n$ by construction of $a$ and \eqref{q:M2}, using that $n\geq\k$. 
    If $\w_n\in\cS\setminus\cS_\N$ instead, then we have that $\log N_{\w_n,a_n}\leq\log M\leq\log n$ by definition of $M$ and $N_a$.
    The proof of the first statement then follows.\medskip

    To prove the second statement, we may apply the first statement to obtain
    \begin{equation}\label{q:z2}
    \begin{split}
        \sum_{1\leq\ell\leq n}\log N_{\w_\ell,a_\ell}
            \leq \ & \sum_{1\leq\ell\leq N_a}\log N_{\w_\ell,a_\ell}+c_a\sum_{N_a\leq\ell\leq n}\log\ell\leq C+c_a(n-N_a)\log n\\
            \le \ & (C+c_a)n\log n.
    \end{split}
    \end{equation}
    for any $n\geq N_a$, where we have set $c_a\defeq(1+\e)/\eta$ and $C\defeq\sum_{1\leq\ell\leq N_a}\log N_{\w_\ell,a_\ell}$, which are both independent of $n$. By ($\Th2$), we have (for $1<\gamma<2$) that, for any $\e_0>0$, there is a $K=K(\e_0)\in\N$ such that $|\tau_\varsigma(\omega)\theta(k)/k^\gamma-1|<\varepsilon_0$ for all $k \ge K$, which implies that
    \[ \theta(k) < \frac{k^\gamma}{\tau_\varsigma(\omega)} (1+\varepsilon_0).\]
    By the definition of $\kappa(n)$ we have $\th(k)>n$ for any $k\geq\k(n)$.
    Let $N$ be such that $\kappa(N)> K$.
    Then for all $n \ge N$ and $k \ge \kappa(n)$, $k^\gamma > \frac{\tau_\varsigma(\omega)}{1+\varepsilon_0}n$.
    Thus,
    \begin{align}\label{q:kasymp}
        \kappa(n)^2 > \left(\frac{\tau_\varsigma(\omega)}{1+\varepsilon_0}n\right)^\frac2\gamma,\q\forall n\geq N.
    \end{align}
    Since $2/\gamma>1$, we have $n\log n=o(n^{2/\gamma})$ and so it follows from \eqref{q:kasymp} that $n\log n=o(\k(n)^2)$ as $n\to+\infty$.
    Substituting this into \eqref{q:z2} completes the proof.   
\end{proof}

For any sequence $a = (a_n)_{n \in \mathbb N} \in E_\omega(\alpha)$ set
\begin{equation}\label{q:Ez}
    E_a \defeq \{ (\xi_n)_{n \in \mathbb N} \in \mathcal B_\omega^\mathbb N \, : \, \xi_{\theta(k)} \in \mathbb N \cap (2^k-2^{\delta k}, 2^k] \, \text{ for } k \ge \kappa \text{ and } \xi_\ell = a_\ell \text{ otherwise}\}.
\end{equation}
Note that ($\Th1$) implies $\pi_\omega(E_a) \neq \emptyset$ and that $\pi_\omega(E_a) \subseteq F_{\cT,\w}(\a)$ since $\th(k)/k\to+\infty$ holds by ($\Th2$). For each $a \in E_\omega(\alpha)$ we define a probability measure $\mu_a$ on $([0,1], \mathcal B([0,1]))$ by setting
\begin{equation}\label{q:muzFFI}
        \mu_a\big(\langle b_1\dotsb b_n\rangle_\w\big)= \begin{cases}
        \displaystyle \prod_{\kappa \le k < \k(n)}\big\lceil 2^{\d k}\big\rceil^{-1}, & \text{if } \langle b_1\dotsb b_n\rangle_\w \cap \pi_\omega(E_a) \neq \emptyset,\\
        0, & \text{otherwise},
        \end{cases}
    \end{equation}
for each $b_1\dotsb b_n\in\cB_\w^n$, $n\in\N$,
where $\prod_{\kappa \le k < \k(n)}\big\lceil 2^{\d k}\big\rceil^{-1}=1$ if there is no $k \ge \kappa$ with $\theta(k) \le n$. The existence of such a measure follows from the Ionescu-Tulcea Theorem (see e.g.~\cite[Theorem 14.35]{Kle20}). One easily sees that $\mu_a(\pi_\omega(E_a))=1$. To obtain the desired lower bound in the proof of Proposition~\ref{p:dimHLowerEta}, we will apply a non-autonomous analogue (Lemma~\ref{Billingsley} below) of Billingsley's Lemma, see e.g.\ \cite[Lemma~1.4.1]{BP}, to $\pi_\omega(E_a)$ and $\mu_a$.

\medskip

For $a \in E_\omega(\alpha)$ as in Lemma~\ref{l:z} we shall use \cite[p36]{Caj} (see also \cite[Satz 2]{Weg}) to compute a lower bound for the Hausdorff dimension of $\pi_\omega(E_a)$ using $\d_0$-covers of FFIs.
Dimension defined using only covers of fundamental intervals (or, more generally, cylinder sets) is known as the Billingsley dimension; see \cite{Bil65}.
In our setting, we shall define the \textit{Billingsley fibre dimension} $\dim_\w A$ of a subset $A\subset[0,1]$ to be the value $t_0\geq0$ such that $\cH_\w^t(A)=0$ for all $t>t_0$ and $\cH_\w^t(A)=+\infty$ for all $t<t_0$, where $\cH_\w^t$ is what we call the \textit{Billingsley fibre $t$-measure} and is given by $$\cH_\w^t(A)\defeq\lim_{\d_0\to0}\inf\left\{\sum_{i\in\cI}|A_i|^t:\{A_i\}_{i\in\cI}\text{ is a }\d_0\text{-cover of }A\text{ with FFIs}\right\}.$$
The limit in $\d_0$ above exists and defines a measure by e.g.\ \cite[Section 2]{Caj}.

\begin{lem}\label{Billingsley}
Let $a \in E_\w(\a)$ be as in Lemma~\ref{l:z}. If there is a $t_0\geq0$ for which
    \begin{equation}\label{BillingsleyCond1}
        \liminf_{n\to+\infty}\frac{\log\mu_a(\langle b_1\dotsb b_n\rangle_\w)}{\log|\langle b_1\dotsb b_n\rangle_\w|}\geq t_0,\quad\forall (b_n)_{n \in \mathbb N}\in E_a,
    \end{equation}
    then $\dim_H \pi_\omega(E_a)\geq t_0$.
\end{lem}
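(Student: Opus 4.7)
My plan is to combine a Frostman-type estimate along the fundamental-interval partitions with the identification of the Billingsley fibre dimension and the Hausdorff dimension from \cite[p36]{Caj} (see also \cite[Satz 2]{Weg}).

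Fix $\e>0$ with $t_0-\e>0$. Hypothesis \eqref{BillingsleyCond1} gives, for every $x=\pi_\w((b_n))$ with $(b_n)\in E_a$, some $N_x\in\N$ such that $\mu_a(J_n(x))\leq|J_n(x)|^{t_0-\e}$ for all $n\geq N_x$, where $J_n(x)\defeq\langle b_1\dotsb b_n\rangle_\w$ is the level-$n$ FFI containing $x$. Since the sets $\{x\in\pi_\w(E_a):N_x\leq k\}$ increase to $\pi_\w(E_a)$ as $k\to+\infty$ and exhaust its unit $\mu_a$-mass, an Egorov-style argument produces, for every $\eta>0$, a Borel set $E\subseteq\pi_\w(E_a)$ with $\mu_a(E)\geq 1-\eta$ and a uniform index $N_0\in\N$ for which $\mu_a(J_n(x))\leq|J_n(x)|^{t_0-\e}$ holds for all $n\geq N_0$ and all $x\in E$.

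Next, I would bound the Billingsley fibre $(t_0-\e)$-measure of $\pi_\w(E_a)$ from below. Given any $\d_0$-cover $\{U_i\}_i\subseteq\cF$ of $\pi_\w(E_a)$ by FFIs with $\d_0$ chosen small enough that $|U_i|<\d_0$ forces the level $n_i$ of $U_i$ to satisfy $n_i\geq N_0$, subadditivity of $\mu_a$ yields
\[
\mu_a(E)\leq\sum_i\mu_a(E\cap U_i)\leq\sum_i|U_i|^{t_0-\e},
\]
where, for each $i$ with $E\cap U_i\neq\emptyset$, one may pick $x_i\in E\cap U_i$ and write $U_i=J_{n_i}(x_i)$ so that the per-summand FFI estimate applies. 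Taking the infimum over such covers and letting $\d_0\to 0$ gives $\cH_\w^{t_0-\e}(\pi_\w(E_a))\geq 1-\eta$; sending $\eta\to 0$ yields $\dim_\w\pi_\w(E_a)\geq t_0-\e$.

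To pass from the Billingsley fibre dimension to the usual Hausdorff dimension, I would invoke \cite[p36]{Caj} (or \cite[Satz 2]{Weg}), which identifies these two dimensions on Borel sets under suitable partition regularity. The hypotheses to verify are that $\{\cF(n)\}_{n\in\N}$ forms a nested sequence of interval partitions of $[0,1]$ (modulo the countable boundary set $[0,1]\setminus X_\w$) generating $\cB([0,1])$, with the diameter of the FFI containing any $x\in X_\w$ shrinking to zero along the infinite branch through $x$ \en all guaranteed here by \eqref{q:lebfibcyl}, \eqref{q:FFISet}, and the OSC on each $\cT_s$. This identification gives $\dim_H\pi_\w(E_a)\geq t_0-\e$, and letting $\e\to 0$ concludes the proof. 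The main technical obstacle I anticipate is precisely the verification of the hypotheses of the Cajar/Wegmann comparison in our countably-generated non-autonomous setting, where FFIs at a given level may have widely disparate sizes; one must confirm that no pathology of the partitions causes the Billingsley fibre dimension and the Hausdorff dimension to disagree on the specific Borel set $\pi_\w(E_a)$.
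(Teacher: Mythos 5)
The gap is in the passage from the Billingsley fibre dimension to the Hausdorff dimension, which you flag as an ``anticipated obstacle'' but never resolve --- and that step is precisely where the real content of the lemma lies. Since covers by FFIs form a restricted class of covers, one always has $\cH_\w^t(A)\geq\cH^t(A)$ and hence $\dim_\w \pi_\w(E_a)\geq\dim_H\pi_\w(E_a)$ trivially; so a lower bound on $\dim_\w\pi_\w(E_a)$, which is what your Frostman/Egorov argument produces, gives by itself no information about $\dim_H\pi_\w(E_a)$. Moreover, the hypotheses you propose to verify for the Cajar/Wegmann identification (nested partitions generating $\cB([0,1])$ with diameters shrinking to zero along each branch) are not the relevant ones: with a countable alphabet these hold automatically and yet the two dimensions can disagree, because a single huge digit makes a cylinder shrink super-exponentially relative to its parent. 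The essential hypothesis of \cite[p36]{Caj}/\cite[Satz 2]{Weg} is the condition \eqref{q:Wegmann}, namely $\lim_{n\to+\infty}\log|\langle b_1\dotsb b_{n+1}\rangle_\w^\circ|/\log|\langle b_1\dotsb b_n\rangle_\w^\circ|=1$ for every $(b_n)_{n\in\N}\in E_a$, and verifying it is the bulk of the paper's proof: one uses that $N_{\w_n,a_n}\leq n^{(1+\e)/\eta}$ from Lemma~\ref{l:z}, that the free digits at the sparse positions $\th(k)$ lie in $(2^k-2^{\d k},2^k]$ so that $\log N_{\w_{n+1},b_{n+1}}=O(\k(n))$ by \eqref{q:M2}, while the denominator $\sum_{1\leq\ell\leq n}\log N_{\w_\ell,b_\ell}$ is bounded below by $\sum_{\k\leq k<\k(n)}(1-\e)k\log2$, which grows like $\k(n)^2$, via \eqref{q:M1} and ($\Th1$). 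Without this computation, specific to the construction of $E_a$, the conclusion $\dim_H\pi_\w(E_a)\geq t_0$ does not follow.

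A secondary, fixable issue: in your Frostman step you assert that $|U_i|<\d_0$ forces the level $n_i$ of the FFI $U_i$ to be at least $N_0$. In a countable-alphabet system this is false for general covers, since a level-one FFI $\langle b_1\rangle_\w$ has diameter $N_{\w_1,b_1}^{-1}$, which can be arbitrarily small. Here it can be repaired using the structure of $E_a$: every $(b_n)_{n\in\N}\in E_a$ has $b_\ell=a_\ell$ except at the positions $\th(k)$, where $b_{\th(k)}\leq 2^k$, so for each fixed $n$ one has $\inf_{(b_n)\in E_a}|\langle b_1\dotsb b_n\rangle_\w|>0$, and $\d_0$ can be chosen below this infimum for all $n\leq N_0$. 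But this observation, too, needs to be made explicitly rather than assumed.
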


\begin{proof}
By using the FFIs $\langle b_1\dotsb b_n\rangle_\w$ in place of the usual fundamental intervals one can prove that \eqref{BillingsleyCond1} implies $\dim_\w \pi_\omega(E_a)\geq t_0$ analogously to how Billingsley's Lemma for autonomous systems is proved, (see e.g.\ \cite[Lemma~1.4.1]{BP}). Therefore, we only need to show that $\dim_H \pi_\omega (E_a)=\dim_\w \pi_\omega (E_a)$.

\medskip
Since $\pi_\omega(a) \in X_\omega$ and each $(b_n)_{n \in \mathbb N}\in E_a$ agrees with $a$ on infinitely many indices $n$, we have that $\pi_\omega: E_a \to \pi_\omega(E_a)$ is a bijection and
    \[ \pi_\omega(E_a) \subseteq X_\omega = \bigcap_{n \in \mathbb N} \bigcup_{b_1 \cdots b_n \in \mathcal B_\omega^n} \langle b_1 \cdots b_n \rangle_\omega^\circ \cap (\{0,1\} \cap \pi_\omega(\mathcal B_\omega^\mathbb N)).\]
    Since, moreover, the length of FFIs decreases to $0$ as their level increases, it suffices by \cite[p36]{Caj} to show for each $(b_n)_{n \in \mathbb N}\in E_a$ that
    \begin{equation}\label{q:Wegmann}
        \lim\limits_{n\to+\infty}\frac{\log|\langle b_1\dotsb b_{n+1}\rangle^\circ_\w|}{\log|\langle b_1\dotsb b_n\rangle_\w^\circ|}=1.
    \end{equation}
    To prove this, first recall from \eqref{q:kasymp} that there is an $N\in\N$ and a constant $c>0$ such that $\k(n)>cn^{1/\gamma}$ for all $n\geq N$, where $\gamma\in(1,2)$, so we have for some $N'\geq N$ that $n<2^{\k(n)}$ for all $n\geq N'$.
    Fix $n\geq\max\{N',N_a,\th(\k)\}$, where $N_a$ is the constant given by Lemma~\ref{l:z}. Then \eqref{q:M1}, \eqref{q:M2} and $n<2^{\k(n)}$ all hold. Also fix $(b_n)_{n \in \mathbb N}\in E_a$. By assumption, $N_{\vs,1}\leq\dotsb\leq N_{\vs,b}$ for all $b\in\N$. From $\sum_{b \in \mathcal B_\vs}N_{\vs,b}^{-1}=1$, it follows that $N_{\vs,b}\geq b$ for all $b\in\N$. Then \eqref{q:M1} implies for any $b\in(2^k-2^{\d k},2^k]$, $k\geq\k$, that
    \begin{equation}\label{q:Billingsley1}
        \log N_{\vs,b}\geq\log b>\log(2^k-2^{\d k})>(1-\e)k\log 2
    \end{equation}
    % and, in addition, (M2) yields
    % \begin{equation}\label{q:Billingsley2}
    %     \log N_{s,b}\leq\frac{1+\e}{\eta_\cT(1-\e)}\log b\leq\frac{1+\e}{\eta_\cT(1-\e)}\log m_k.
    % \end{equation}
    and so it follows from ($\Th1$) and the fact that $(b_n)_{n \in \mathbb N}\in E_a$ that
    \begin{align*}
        0\leq\frac{\log N_{\w_{n+1},b_{n+1}}}{\sum_{1\leq\ell\leq n}\log N_{\w_\ell,b_\ell}}
            &\leq\frac{\log N_{\w_{n+1},b_{n+1}}}{\sum_{\k\leq k<\k(n)}\log N_{\vs,b_{\th(k)}}}\\
            &\leq\frac1{(1-\e)\log 2}\frac{\log N_{\w_{n+1},b_{n+1}}}{\sum_{\k\leq k<\k(n)}k}\\
            &=\frac2{(1-\e)\log 2}\frac{\log N_{\w_{n+1},b_{n+1}}}{\k(n)(\k(n)-1)-\k(\k-1)}.
    \end{align*}
    We show that the quantity on the right-hand side of the above equation vanishes as $n\to+\infty$, which we do by splitting into two cases.
    First, suppose that $n+1=\th(\k(n))$.
    Then we have $b_{n+1}\in\big(2^{\k(n)}-2^{\d\k(n)},2^{\k(n)}\big]$ since $(b_n)_{n \in \mathbb N}\in E_a$ so ($\Th1)$ and \eqref{q:M2} yield
    \begin{equation}\label{q:Billingsley2}
        \log N_{\w_{n+1},b_{n+1}}
            =\log N_{\vs,b_{n+1}}\leq\frac{1+\e}{\eta_\cT}\log b_{n+1}\leq\frac{1+\e}{\eta_\cT}\k(n)\log2.
    \end{equation}
    Suppose instead that $n+1\neq\th(\k(n))$.
    Then $n+1\neq\th(k)$ for all $k\geq\k$ so it follows from the fact that $(b_n)_{n \in \mathbb N}\in E_a$ and Lemma~\ref{l:z} that
    \begin{align*}
        \log N_{\w_{n+1},b_{n+1}}=\log N_{\w_{n+1},a_{n+1}}\leq \frac{1+\e}{\eta}\log(n+1)\leq \frac{1+\e}{\eta}\k(n)\log 2,
    \end{align*}
    using that $n\geq N'$. Since $\eta_\cT\geq\eta$, we have that
    \begin{align}
        0\leq\lim_{n\to+\infty}\frac{\log N_{\w_{n+1},b_{n+1}}}{\k(n)(\k(n)-1)-\k(\k-1)}\leq\frac{1+\varepsilon}{\eta}\lim_{n\to+\infty}\frac{\k(n)\log 2}{\k(n)(\k(n)-1)-\k(\k-1)}=0,
    \end{align}
    as desired.
    Consequently,
    \begin{align*}
        \lim_{n\to+\infty}\frac{\log|\langle b_1\dotsb b_{n+1}\rangle_\w^\circ|}{\log|\langle b_1\dotsb b_n\rangle_\w^\circ|}
            &=\lim_{n\to+\infty}\frac{\sum_{1\leq\ell\leq n+1}\log N_{\w_\ell,b_\ell}}{\sum_{1\leq\ell\leq n}\log N_{\w_\ell,b_\ell}}\\
            &=1+\lim_{n\to+\infty}\frac{\log N_{\w_{n+1},b_{n+1}}}{\sum_{1\leq\ell\leq n}\log N_{\w_\ell,b_\ell}}=1
    \end{align*}
    and so \eqref{q:Wegmann} holds.
    Therefore, $\dim_H \pi_\omega(E_a)=\dim_\w \pi_\omega(E_a)\geq t_0$.
\end{proof}

We now have everything that we need to prove Proposition~\ref{p:dimHLowerEta}.

\begin{proof}[Proof of Proposition~\ref{p:dimHLowerEta}]
    Recall that $\vs\in\cS$ satisfies \eqref{q:etaTs}.
    % Let $(m_k)_{k\in\N}$ be the sequence from Lemma~\ref{l:mk}.
    Fix $\e,\d\in(0,1)$, and take $\k\in\N$ (depending on $\e,\d$) so that \eqref{q:M1} and \eqref{q:M2} hold.
    Fix $a\in E_\w(\a)$ as in Lemma~\ref{l:z}.
    Let $\mu_a$ be the measure constructed in \eqref{q:muzFFI} that is supported on $\pi_\omega(E_a)$.
    To obtain a lower bound for $\dim_HF_{\cT,\w}(\a)$, by Lemma~\ref{Billingsley}, it suffices to show that
    \begin{equation}\label{BillingsleyCond2}
        \liminf_{n\to+\infty}\frac{\log\mu_a(\langle b_1\dotsb b_n\rangle_\w)}{\log|\langle b_1\dotsb b_n\rangle_\w|}
            \geq\frac{\d}{1+\e}\eta_\cT,\quad\forall (b_n)_{n \in \mathbb N}\in E_a,
    \end{equation}
    since then $\dim_HF_{\cT,\w}(\a)\geq\dim_H \pi_\omega(E_a)\geq\frac{\d}{1+\e}\eta_\cT$ and we can take $\e\to0$ and $\d\to1$ to get the desired lower bound.
    Take $(b_n)_{n \in \mathbb N}\in E_a$ and observe that
    \begin{align*}
		-\log\left|\langle b_1\dotsb b_n\rangle_\w\right|
            &=\sum_{1\le\ell\le n}\log{N_{\w_\ell,b_\ell}}\\
		  &\leq\sum_{1\leq\ell\leq n}\log N_{\w_\ell,a_\ell}+\sum_{\k\leq k<\k(n)}\log N_{\vs,b_{\th(k)}}.
    \end{align*}
    By the same reasoning as in the derivation of \eqref{q:Billingsley2}, we have $$\log N_{\vs,b_{\th(k)}}\leq\frac{1+\e}{\eta_\cT}k\log2,\q\forall k\geq\k,$$
    so, for any $(b_n)_{n \in \mathbb N}\in E_a$,
    \begin{align*}
        \frac{\log\mu_a(\langle b_1\dotsb b_n\rangle_\w)}{\log|\langle b_1\dotsb b_n\rangle_\w|}
            &\geq\frac{\sum_{\k\leq k<\k(n)}\d k\log2}
                {\sum_{1\leq\ell\leq n}\log N_{\w_\ell,a_\ell}+\frac{1+\e}{\eta_\cT}\sum_{\k\leq k<\k(n)}k\log2}
                =\frac{c_n\d}{1+\e}\eta_\cT,
    \end{align*}
    where
    \begin{align}
        c_n\defeq\frac{\sum_{\k\leq k<\k(n)}k\log2}{\frac{\eta_\cT}{1+\e}\sum_{1\leq\ell\leq n}\log N_{\w_\ell,a_\ell}+\sum_{\k\leq k<\k(n)}k\log2}.
    \end{align}
    By Lemma~\ref{l:z}, it follows that 
\[ \lim_{n \to+\infty} \frac{\sum_{1\leq\ell\leq n}\log N_{\w_\ell,a_\ell}}{\sum_{\k\leq k<\k(n)}k\log 2}
    = \frac2{\log2}\lim_{n \to+\infty} \frac{\sum_{1\leq\ell\leq n}\log N_{\w_\ell,a_\ell}}{\k(n)(\kappa(n)-1) -\kappa(\kappa-1)} =0,\]    
and thus $c_n\to1$ as $n\to+\infty$, which gives \eqref{BillingsleyCond2}.
    Therefore, $$\dim_HF_{\cT,\w}(\a)\geq\frac{\d}{1+\e}\eta_\cT$$ by Lemma~\ref{Billingsley} and we may take $\e\to0$ and $\d\to1$ to conclude the proof.
\end{proof}

\section{Lower Bound: Fibre Dimension}\label{sec:beta1}

In this section we show that $\b_\cT(\a)$ is a lower bound for $\dim_HF_{\cT,\w}(\a)$. We will achieve this through an approximation argument on a sequence $(\cT^{(m)})_{m\in\N}$, where each $\cT^{(m)}\defeq\{\mathcal T_s^{(m)}\}_{s\in\cS}$ is a collection of finite GLS IFSs. We start by defining these systems $\mathcal T^{(m)}$, $m \in \mathbb N$.

\subsection{Approximation with finite GLS IFSs}\label{FiniteApproxSec}

For each $m\in\N$ and $s\in\cS$ there is an $R \in \mathbb N$ for which we can write
\[ [0,1]\setminus\bigcup_{b\in\cB_s,\,b\leq m}f_{s,b}([0,1]) = \bigcup_{i=1}^R (\ell_i,r_i),\]
where the intervals $(\ell_i, r_i)$, $1\le i \le R$, are disjoint and non-degenerate, whenever the set is non-empty. For $1 \le i \le R$ let $c_i \defeq \min\{ b \in \mathbb N \, : \, f_{s,b}([0,1]) \subseteq [\ell_i,r_i]\} > m$. Set $\mathcal B_s^{(m)} \defeq \{ 1, \ldots, m\} \cup \{c_i \, : \, 1 \le i \le R\}$. For $b \le m$ set $f_{s,b}^{(m)}\defeq f_{s,b}$ and for $1 \le i \le R$ set $f_{s,c_i}^{(m)}(x) \defeq (r_i-\ell_i) x + \ell_i$ and finally, put $\mathcal T_s^{(m)} \defeq \{ f_{s,b}^{(m)}\}_{b \in \mathcal B_s^{(m)}}$. See Figure~\ref{FiniteApproxFig} for examples of GLS IFSs and their first three associated approximations. For each $m\in\N$, put $\cT^{(m)}\defeq\big\{\mathcal T_s^{(m)}\big\}_{s \in \mathcal S}$, and denote by $\cD^{(m)}\defeq\big\{(s,b)\in\cD:s\in\cS,b\in\cB_s^{(m)}\big\}$ the associated digit set. Observe that $\cD_m$ is a subset of both $\cD$ and $\cD^{(m)}$; $\cD_m$ contains all $(s,b)$ for which $f_{s,b} = f_{s,b}^{(m)}$.\medskip

\begin{figure}[t]
    \centering
    \begin{subfigure}[$\mathcal T_L^{(1)}$]{
        \begin{tikzpicture}[scale=3.25]
            \draw[dashed]   (0,0.5)--(1,0.5);
            \draw[ultra thick, Bittersweet!90]  (0,0)--(1,0.5);
            \draw[ultra thick, NavyBlue!80] (0,0.5)--(1,1);
            \draw[ultra thick]  (0,0)node[below]{\small 0}--(1,0)node[below]{\small 1}--(1,1)--(0,1,0)node[left]{\small 1}--cycle;
            \draw   (0,1/2)node[left]{\scriptsize $\frac12$};
        \end{tikzpicture}}
    \end{subfigure}
    \hspace{-0.5cm}
    \begin{subfigure}[$\mathcal T_L^{(2)}$]{
        \begin{tikzpicture}[scale=3.25]
            \draw[dashed]   (0,1/3)--(1,1/3)
                            (0,1/2)--(1,1/2);
            \draw[ultra thick, Bittersweet!90] (0,0)--(1,1/3);
            \draw[ultra thick, NavyBlue!80]    (0,1/3)--(1,1/2)(0,1/2)--(1,1);
            \draw[ultra thick]  (0,0)node[below]{\small 0}--(1,0)node[below]{\small 1}--(1,1)--(0,1)node[left]{\small 1}--cycle;
            \draw   (0,1/3)node[left]{\scriptsize $\frac13$}
                    (0,1/2)node[left]{\scriptsize $\frac12$};
        \end{tikzpicture}}
    \end{subfigure}
    \hspace{-0.5cm}
    \begin{subfigure}[$\mathcal T_L^{(3)}$]{
        \begin{tikzpicture}[scale=3.25]
            \draw[dashed]   (0,1/4)--(1,1/4)
                            (0,1/3)--(1, 1/3)
                            (0,1/2)--(1, 1/2);
            \draw[ultra thick, Bittersweet!90]  (0,0)--(1,1/4);
            \draw[ultra thick, NavyBlue!80]    (0,1/4)--(1,1/3)(0,1/3)--(1,1/2)(0,1/2)--(1,1);
            \draw[ultra thick]  (0,0)node[below]{\small 0}--(1,0)node[below]{\small 1}--(1,1)--(0,1)node[left]{\small 1}--cycle;
            \draw(0,1/4)node[left]{\scriptsize $\frac14$}
                    (0,1/2)node[left]{\scriptsize $\frac12$};
        \end{tikzpicture}}
    \end{subfigure}
    \hspace{-0.5cm}
    \begin{subfigure}[$\mathcal T_L$]{
        \begin{tikzpicture}[scale=3.25]
            \draw[dashed]   (0,1/5)--(1,1/5)
                            (0,1/4)--(1,1/4)
                            (0,1/3)--(1,1/3)
                            (0,1/2)--(1,1/2);
            \draw[ultra thick, NavyBlue!80] (0,1/10)--(1,1/9)(0,1/9)--(1,1/8)(0,1/8)--(1,1/7)(0,1/7)--(1,1/6)(0,1/6)--(1,1/5)(0,1/5)--(1,1/4)(0,1/4)--(1,1/3)(0,1/3)--(1,1/2)(0,1/2)--(1,1);
            \filldraw[fill=NavyBlue!80, draw=NavyBlue!80] (0,0) rectangle (1,1/10);
            \draw[ultra thick]  (0,0)node[below]{\small 0}--(1,0)node[below]{\small 1}--(1,1)--(0,1)node[left]{\small 1}--cycle;
            \draw   
                    (0,1/3)node[left]{\scriptsize $\frac13$}
                    (0,1/2)node[left]{\scriptsize $\frac12$}
                    (0,1/7)node[left]{\scriptsize $\vdots$};
        \end{tikzpicture}}
    \end{subfigure}
    \vspace{-0.3cm}
    
    \begin{subfigure}[$\mathcal T_F^{(1)}$]{
        \begin{tikzpicture}[scale=3.25]
            \draw[dashed]   (0,1/3)--(1,1/3);
            \draw[ultra thick, Bittersweet!90]  (0,1/3)--(1,1);
            \draw[ultra thick, NavyBlue!80] (0,0)--(1,1/3);
            \draw[ultra thick]  (0,0)node[below]{\small 0}--
                                (1,0)node[below]{\small 1}--
                                (1,1)--
                                (0,1)node[left]{\small 1}--cycle;
            \draw   (0,1/3)node[left]{\scriptsize $\frac13$};
        \end{tikzpicture}}
    \end{subfigure}
    \hspace{-0.5cm}
    \begin{subfigure}[$\mathcal T_F^{(2)}$]{
        \begin{tikzpicture}[scale=3.25]
            \draw[dashed]   (0,1/3)--(1,1/3)
                            (0,1/2)--(1,1/2)
                            (0,5/6)--(1,5/6);
            \draw[ultra thick, Bittersweet!90]  (0,1/3)--(1,1/2)(0,5/6)--(1,1);
            \draw[ultra thick, NavyBlue!80] (0,0)--(1,1/3)(0,1/2)--(1,5/6);
            \draw[ultra thick]  (0,0)node[below]{\small 0}--(1,0)node[below]{\small 1}--(1,1)--(0,1)node[left]{\small 1}--cycle;
            \draw (0,1/3)node[left]{\scriptsize $\frac13$}
                    (0,1/2)node[left]{\scriptsize $\frac12$}
                    (0,5/6)node[left]{\scriptsize $\frac56$};
        \end{tikzpicture}}
    \end{subfigure}
    \hspace{-0.5cm}
    \begin{subfigure}[$\mathcal T_F^{(3)}$]{
        \begin{tikzpicture}[scale=3.25]
            \draw[dashed](0,1/3)--(1,1/3)
                (0,1/2)--(1,1/2)
                            (0,5/6)--(1,5/6);
            \draw[ultra thick, Bittersweet!90]  (0,5/6)--(1,1);
            \draw[ultra thick, NavyBlue!80] (0,0)--(1,1/3)(1,1/3)--(0,1/2)(0,1/2)--(1,5/6);
            \draw[ultra thick]  (0,0)node[below]{\small 0}--(1,0)node[below]{\small 1}--(1,1)--(0,1)node[left]{\small 1}--cycle;
            \draw(0,1/3)node[left]{\scriptsize $\frac13$}
                    (0,1/2)node[left]{\scriptsize $\frac12$}
                    (0,5/6)node[left]{\scriptsize $\frac56$};
        \end{tikzpicture}}
    \end{subfigure}
    \hspace{-0.5cm}
    \begin{subfigure}[$\mathcal T_F$]{
        \begin{tikzpicture}[scale=3.25]
            \draw[dashed]   (0,1/3)--(1,1/3)
                            (0,1/2)--(1,1/2)
                            (0,5/6)--(1,5/6);
            \draw[ultra thick, NavyBlue!80] (0,0)--(1,1/3)(1,1/3)--(0,1/2)(0,1/2)--(1,5/6)(1,5/6)--(0,1);
            \draw[ultra thick]  (0,0)node[below]{\small 0}--
                                (1,0)node[below]{\small 1}--
                                (1,1)--
                                (0,1)node[left]{\small 1}--cycle;
            \draw   (0,1/3)node[left]{\scriptsize $\frac13$}
                    (0,1/2)node[left]{\scriptsize $\frac12$}
                    (0,5/6)node[left]{\scriptsize $\frac56$};
        \end{tikzpicture}}
    \end{subfigure}
    \vspace{-0.3cm}

    \begin{subfigure}[$\mathcal T_I^{(1)}$]{
        \begin{tikzpicture}[scale=3.25]
            \draw[dashed]   (0,1/3)--(1,1/3)
                            (0,2/3)--(1,2/3);
            \draw[ultra thick, NavyBlue!80] (0,1/3)--(1,2/3);
            \draw[ultra thick, Bittersweet!90]  (0,0)--(1,1/3)(0,2/3)--(1,1);
            \draw[ultra thick]  (0,0)node[below]{\small 0}--(1,0)node[below]{\small 1}--(1,1)--
                                (0,1)node[left]{\small 1}--cycle;
            \draw   (0,1/3)node[left]{\scriptsize $\frac13$}
                    (0,2/3)node[left]{\scriptsize $\frac23$};
        \end{tikzpicture}}
    \end{subfigure}
    \hspace{-0.5cm}
    \begin{subfigure}[$\mathcal T_I^{(2)}$]{
        \begin{tikzpicture}[scale=3.25]
            \draw[dashed]   (0,1/3)--(1,1/3)
                            (0,2/3)--(1,2/3)
                            (0,1/6)--(1,1/6);
            \draw[ultra thick, NavyBlue!80] (0,1/3)--(1,2/3)(0,0)--(1,1/6);
            \draw[ultra thick, Bittersweet!90]  (0,1/6)--(1,1/3)(0,2/3)--(1,1);
            \draw[ultra thick]  (0,0)node[below]{\small 0}--
                                (1,0)node[below]{\small 1}--
                                (1,1)--
                                (0,1)node[left]{\small 1}--cycle;
            \draw   (0,1/3)node[left]{\scriptsize $\frac13$}
                    (0,2/3)node[left]{\scriptsize $\frac23$}
                    (0,1/6)node[left]{\scriptsize $\frac16$};
        \end{tikzpicture}}
    \end{subfigure}
    \hspace{-0.5cm}
    \begin{subfigure}[$\mathcal T_I^{(3)}$]{
        \begin{tikzpicture}[scale=3.25]
            \draw[dashed]   (0,1/6)--(1,1/6)
                            (0,1/3)--(1,1/3)
                            (0,2/3)--(1,2/3)
                            (0,5/6)--(1,5/6);
            \draw[ultra thick, NavyBlue!80] (0,0)--(1,1/6)(0,1/3)--(1,2/3)(1,5/6)--(0,1);
            \draw[ultra thick, Bittersweet!90]  (0,1/6)--(1,1/3)(0,2/3)--(1,5/6);
            \draw[ultra thick]  (0,0)node[below]{\small 0}--
                                (1,0)node[below]{\small 1}--
                                (1,1)--
                                (0,1)node[left]{\small 1}--cycle;
            \draw   (0,1/6)node[left]{\scriptsize $\frac16$}
                    (0,1/3)node[left]{\scriptsize $\frac13$}
                    (0,2/3)node[left]{\scriptsize $\frac23$}
                    (0,5/6)node[left]{\scriptsize $\frac56$};
        \end{tikzpicture}}
    \end{subfigure}
    \hspace{-0.5cm}
    \begin{subfigure}[$\mathcal T_I$]{
        \begin{tikzpicture}[scale=3.25]
            \draw[dashed]   (0,1/6)--(1,1/6)
                            (0,1/4)--(1,1/4)
                            (0,3/4)--(1,3/4)
                            (0,5/6)--(1,5/6);
            \draw[ultra thick, NavyBlue!80] (0,0)--(1,1/6)(0,1/6)--(1,1/4)(0,1/4)--(1,7/24)(0,7/24)--(1,15/48)
            (0,1/3)--(1,2/3)(1,5/6)--(0,1)(1,9/12)--(0,5/6)(1,17/24)--(0,9/12)(1,33/48)--(0,17/24);
            \filldraw[fill=NavyBlue!80, draw=NavyBlue!80]   (0,15/48) rectangle (1,1/3)(0,2/3) rectangle (1,33/48);
            \draw[ultra thick]  (0,0)node[below]{\small 0}--
                                (1,0)node[below]{\small 1}--
                                (1,1)--
                                (0,1)node[left]{\small 1}--cycle;
            \draw   (0,1/6)node[left]{\scriptsize $\frac16$}
                    (0,1/3)node[left]{\scriptsize $\frac13$}
                    %(0,3/12)node[left]{\tiny $\vdots$}(0,9/12)node[below]{\tiny $\vdots$}
                    (0,2/3)node[left]{\scriptsize $\frac23$}
                    (0,5/6)node[left]{\scriptsize $\frac56$};
            \draw[dashed]   (0,1/3)--(1,1/3)
                            (0,2/3)--(1,2/3);
        \end{tikzpicture}}
    \end{subfigure}
    \caption{
        Three examples of GLS IFSs $\mathcal T_s$ with their first three approximations $\mathcal T_s^{(m)}$ for $m=1,2,3$.
        In (a)-(c), we see the first three approximations of the L\"uroth system $\mathcal T_L$ shown in (d).
        In (e)-(g), we see the first three approximations of the GLS IFS $\mathcal T_F$ with finite set $\mathcal B_F$ shown in (h).
        In (i)-(k), we see the first three approximations of the GLS IFS $\mathcal T_I$ with infinite set $\mathcal B_I$ shown in (l).
        }
    \label{FiniteApproxFig}
\end{figure}
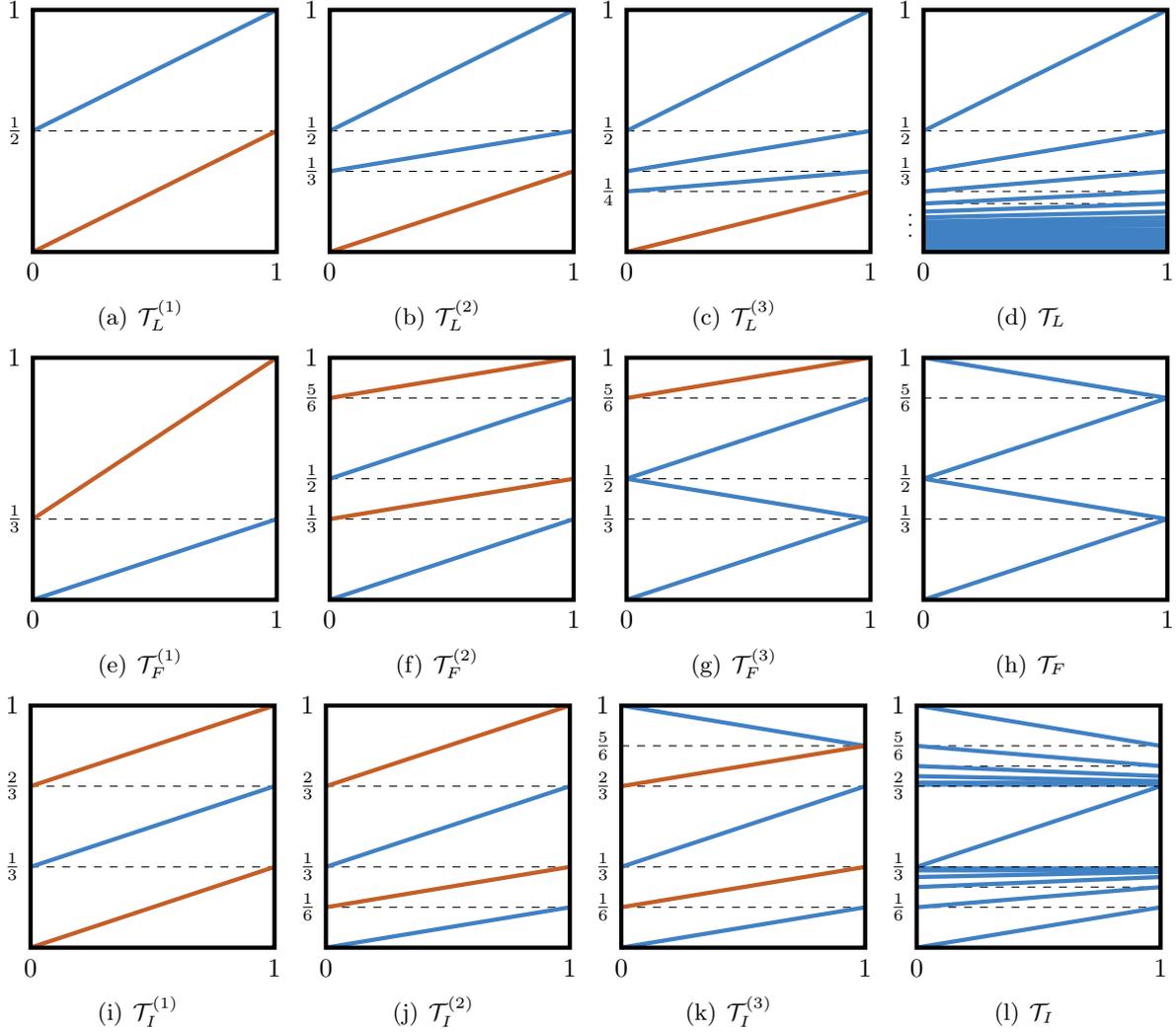

Given a frequency vector $\a=(\a_d)_{d\in\cD}$, define the frequency vector $\a^{(m)}=(\a_d^{(m)})_{d\in\cD^{(m)}}$ by
\begin{equation}\label{q:alpham}
    \a_{(s,b)}^{(m)}\defeq\left\{
    \begin{aligned}
         &\phantom{\frac12}\a_{(s,b)},  &&\text{if }b \le m,\\
         \\
         &\sum_{\substack{c\in\cB_s\\ f_{(s,c)}([0,1])\subset f_{(s,b)}^{(m)}([0,1])}}\a_{(s,c)}, &&\text{otherwise}.
    \end{aligned}\right.
\end{equation}
For $m \in \mathbb N$ and $\omega \in \Omega$ set $\mathcal B_{\omega,m}^\mathbb N = \{ (b_n)_{n \in \mathbb N} \, : \, b_n \in \mathcal B_{\omega_n}^{(m)} \, \, \forall \, n \in \mathbb N\}$. Let
\[ E_{\omega,m}(\alpha^{(m)}) = \left\{ (b_n)_{n \in \mathbb N} \in \cB_{\omega,m}^\mathbb N \, : \, \lim_{n\to+\infty}\frac{\#\{1\leq\ell\leq n:(\w_\ell,b_\ell)=d\}}n=\a_d^{(m)}\,\forall d\in\cD^{(m)} \right\}.\]
Let
\[ \pi_{\omega,m}: \cB_{\omega,m}^\mathbb N \to [0,1], \, (b_n)_{n \in \mathbb N} \mapsto  \lim_{n \to \infty} f_{\omega_1,b_1}^{(m)} \circ f_{\omega_2,b_2}^{(m)} \circ \cdots \circ f_{\omega_n,b_n}^{(m)}([0,1])\]
and write $F_{\cT^{(m)},\,\w}(\a^{(m)})\defeq \pi_\omega(E_{\omega,m}(\alpha^{(m)}))$ for the Besicovitch-Eggleston $\a^{(m)}$-level set of the system $(\cT^{(m)},\w)$. For $n \in \mathbb N$ we use the notation $\mathcal B_{\omega,m}^n = \prod_{1 \le \ell \le n} \mathcal B_{\omega_\ell}^{(m)}$ and for $b_1 \cdots b_n \in \mathcal B_{\omega,m}^n$ we denote the corresponding FFI of $(\mathcal T^{(m)}, \omega)$ by
\[ \langle b_1\dotsb b_n\rangle_{\w,m} \defeq f_{\omega_1, b_1}^{(m)} \circ \cdots \circ f_{\omega_n,b_n}^{(m)} ([0,1]).\]
Set
\[ X_\omega^{(m)} = \bigcap_{n \in \mathbb N} \bigcup_{b_1 \cdots b_n \in \mathcal B_{\omega,m}^{n}} \langle b_1\dotsb b_n\rangle_{\w,m}^\circ \cup \{0,1\}.\]

\subsection{Proving Theorem~\ref{dimH}}\label{sec:beta}

Let $\alpha = (\alpha_d)_{d \in \mathcal D}$ be a frequency vector satisfying $(\dagger)$ and fix $\omega \in \Omega_{\mathcal T}(\alpha)$.
The next step in the proof of Theorem~\ref{dimH} is to define an appropriate measure $\mu_{\w,\a}$ supported on $F_{\cT,\w}(\a)$.

\begin{lem}\label{l:muw}
    Suppose that there is an $s\in\cS$ for which $0<\a_{(s,b)}<\a_s$ for some $b\in\cB_s$. Then there is a probability measure $\mu_{\w,\a}$ on $\big([0,1],\cB([0,1])\big)$ with $\mu_{\w,\a}(F_{\cT,\w}(\a))=1$ and
    \begin{equation}\label{q:muwFFI}
        \mu_{\w,\a}\big(\langle b_1\dotsb b_n\rangle_\w\big)=\prod_{1\leq\ell\leq n}\frac{\a_{(\w_\ell,b_\ell)}}{\a_{\w_\ell}},\q\forall b_1\dotsb b_n\in\cB_\w^n,\,n\in\N.
    \end{equation}
\end{lem}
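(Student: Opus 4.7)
The plan is to construct $\mu_{\w,\a}$ as the pushforward under $\pi_\w$ of an Ionescu--Tulcea product measure on $\mathcal B_\w^\mathbb N$. Concretely, for each $\ell\in\mathbb N$, define a probability distribution $p_\ell$ on $\mathcal B_{\w_\ell}$ by
\[
    p_\ell(b)\defeq\frac{\a_{(\w_\ell,b)}}{\a_{\w_\ell}}\quad\text{if }\a_{\w_\ell}>0,
\]
and $p_\ell\defeq\delta_1$ otherwise. These are genuine probability vectors since $\sum_{b\in\mathcal B_s}\a_{(s,b)}=\a_s$. The Ionescu--Tulcea theorem (already invoked for $\mu_a$ in Section~\ref{sec:Eta}) then produces a unique probability measure $\tilde\mu$ on $\mathcal B_\w^\mathbb N$ with the product $\sigma$-algebra such that $\tilde\mu([b_1\dotsb b_n])=\prod_{1\le\ell\le n}p_\ell(b_\ell)$ on every cylinder. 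Since $\pi_\w$ is Borel-measurable (the FFIs generate $\cB([0,1])$ by \eqref{q:FFISet}), we can set $\mu_{\w,\a}\defeq\pi_\w{}_*\tilde\mu$.

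To verify the FFI formula \eqref{q:muwFFI}, I will observe that $\pi_\w^{-1}(\langle b_1\dotsb b_n\rangle_\w)=[b_1\dotsb b_n]\cup B$, where $B$ is a countable set of boundary sequences that encode the (at most two) endpoints of $\langle b_1\dotsb b_n\rangle_\w$ via non-canonical expansions. The key use of the hypothesis that $0<\a_{(s^*,b^*)}<\a_{s^*}$ for some pair $(s^*,b^*)$ is to show that $\tilde\mu$ assigns measure zero to every singleton in $\mathcal B_\w^\mathbb N$: for such $s^*$ we have $\a_{s^*}>0$, so by Theorem~\ref{Spectrum} the symbol $s^*$ occurs along $\w$ with positive frequency, and for every such index $\ell$ the bound $p_\ell(b)\le q\defeq\max\{\a_{(s^*,b^*)}/\a_{s^*},\,1-\a_{(s^*,b^*)}/\a_{s^*}\}<1$ holds uniformly in $b$. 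Hence any single sequence has $\tilde\mu$-mass at most $q^{\,\infty}=0$, so $\tilde\mu(B)=0$ and $\mu_{\w,\a}(\langle b_1\dotsb b_n\rangle_\w)=\tilde\mu([b_1\dotsb b_n])=\prod_{\ell}\a_{(\w_\ell,b_\ell)}/\a_{\w_\ell}$, as required.

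It remains to show $\mu_{\w,\a}(F_{\cT,\w}(\a))=1$, which by definition of the pushforward reduces to $\tilde\mu(E_\w(\a))=1$. Fix $d=(s,b)\in\cD$. If $\a_s>0$, the random variables $\mathbbm 1_{\{b_\ell=b\}}$ taken along the subsequence of indices $\ell$ with $\w_\ell=s$ are independent under $\tilde\mu$ with common mean $\a_{(s,b)}/\a_s$, so the strong law of large numbers gives
\[
\lim_{n\to+\infty}\frac{\tau_d((\w_k,b_k)_{k\in\mathbb N},n)}{\tau_s(\w,n)}=\frac{\a_{(s,b)}}{\a_s}\quad\tilde\mu\text{-a.s.},
\]
and combining this with $\tau_s(\w,n)/n\to\a_s$ (which holds by $\w\in\W_\cT(\a)$ and Theorem~\ref{Spectrum}) yields the desired limit $\a_d$. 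If instead $\a_s=0$, then $\tau_d(\cdot,n)/n\le\tau_s(\w,n)/n\to 0=\a_d$ deterministically. Taking a countable intersection over $d\in\cD$ gives $\tilde\mu(E_\w(\a))=1$, hence $\mu_{\w,\a}(F_{\cT,\w}(\a))\ge\tilde\mu(\pi_\w^{-1}(\pi_\w(E_\w(\a))))\ge\tilde\mu(E_\w(\a))=1$.

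The only delicate step is the second paragraph: showing $\tilde\mu(B)=0$ so that the boundary ambiguity of $\pi_\w$ does not spoil the FFI formula. Without the hypothesis that some $\a_{(s,b)}$ lies strictly between $0$ and $\a_s$, the kernels $p_\ell$ could all be Dirac masses and $\tilde\mu$ could charge individual sequences, in which case the pushforward measure would generally fail to satisfy \eqref{q:muwFFI} on fundamental intervals whose endpoints have a second expansion; this is precisely what the non-triviality assumption rules out.
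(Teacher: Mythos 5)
Your construction is correct, but it follows a genuinely different route from the paper. The paper builds $\mu_{\w,\a}$ intrinsically on $[0,1]$: it introduces the ring $\cF_{\w,\a}$ of traces on $F_{\cT,\w}(\a)$ of countable unions of FFIs, defines a pre-measure via maximal partitions as in \eqref{q:muw}, uses the hypothesis $0<\a_{(s,b)}<\a_s$ only to obtain the uniform decay of cylinder masses \eqref{q:measureofpoints} needed for the Carath\'eodory extension, and then gets $\mu_{\w,\a}\big(F_{\cT,\w}(\a)\big)=1$ essentially by construction, extending to $\cB([0,1])$ by $B\mapsto\mu_{\w,\a}\big(B\cap F_{\cT,\w}(\a)\big)$. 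You instead take the product (Ionescu--Tulcea) measure $\tilde\mu$ on $\cB_\w^\N$ with kernels $\a_{(\w_\ell,\cdot)}/\a_{\w_\ell}$ and push it forward by $\pi_\w$, using the hypothesis to show $\tilde\mu$ has no atoms so that the endpoint ambiguity of $\pi_\w$ does not affect \eqref{q:muwFFI}, and you obtain $\tilde\mu\big(E_\w(\a)\big)=1$ from the strong law of large numbers along the deterministic subsequences $\{\ell:\w_\ell=s\}$ combined with Theorem~\ref{Spectrum}. Your route delegates the countable-additivity work to standard product-measure theory and turns the support statement into an honest limit theorem, at the price of having to control $\pi_\w$; the paper's route never mentions $\pi_\w$-preimages but must verify the pre-measure axioms by hand. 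Two small points you should still record: since FFIs of the same level have disjoint interiors, $\pi_\w$ is continuous and at most two-to-one, so $\pi_\w^{-1}\big(\langle b_1\dotsb b_n\rangle_\w\big)$ is the cylinder $[b_1\dotsb b_n]$ together with finitely many boundary sequences (your ``countable set $B$''), and the two-to-one property also gives that $F_{\cT,\w}(\a)=\pi_\w\big(E_\w(\a)\big)$ is Borel (image of a Borel set under a countable-to-one Borel map), so the assertion $\mu_{\w,\a}\big(F_{\cT,\w}(\a)\big)=1$ is well posed; alternatively restrict to $X_\w$, whose complement has countable preimage and hence $\tilde\mu$-measure zero by your no-atoms argument. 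Also note that since $(\dagger)$ is in force throughout this section, $\a_s>0$ for every $s\in\cS$, so your fallback kernel $\delta_1$ and the case $\a_s=0$ in the SLLN step are vacuous.
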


\begin{proof}
Consider the measure-theoretic ring of subsets of $\mathcal F_{\mathcal T, \omega}(\alpha)$ given by
\begin{equation}\label{q:cFw}
    \cF_{\w,\a}\defeq\big\{A\cap F_{\cT,\w}(\a):A\text{ is a countable union of FFIs of }(\cT,\w)\big\}\cup\{\emptyset\}.
\end{equation}

\medskip
For $A\cap F_{\cT,\w}(\a) \in \cF_{\w,\a}$, we define the $\cF_{\w,\a}$-maximal partition as the unique partition $\cP\subseteq \mathcal F_{\w,\a}$ of $A\cap F_{\cT,\w}(\a)$ into non-empty elements of $\cF_{\w,\a}$ such that, for each $P\in\cP$, there does not exist an FFI $B$ with $P\subsetneq B\cap F_{\cT,\w}(\a)\subsetneq A\cap F_{\cT,\w}(\a)$, and we denote it by $\mathcal P_A$.
We then set $\mu^*_{\omega, \alpha}(\emptyset)=0$ and
\begin{equation}\label{q:muw}
    \mu_{\w,\a}^*\big(A\cap F_{\cT,\w}(\a)\big)\defeq\sum_{\langle b_1\dotsb b_n\rangle_\w\cap F_{\cT,\w}(\a)\in\cP_A}\prod_{1\leq\ell\leq n}\frac{\a_{(\w_\ell,b_\ell)}}{\a_{\w_\ell}}
\end{equation}
for any countable union of FFIs $A$.
Note that if $A = \langle b_1 \cdots b_n \rangle_\omega$ is an FFI, then we have $\mathcal P_A = \{ A \cap F_{\cT,\w}(\a)\}$ and
$\mu_{\w,\a}^*(A\cap F_{\cT,\w}(\a)) = \prod_{1\leq\ell\leq n}\a_{(\w_\ell,b_\ell)}/\a_{\w_\ell} <+\infty$ as in \eqref{q:muwFFI}.

\medskip
If there is an $s\in\cS$ for which $0<\a_{(s,b)}<\a_s$ for some $b\in\cB_s$, then for any $b_1 \cdots b_n \in \mathcal B_\omega^n$ we have
\[ \mu^*_{\omega,\alpha} \big( \langle b_1 \cdots b_n\rangle_\omega \cap F_{\cT,\w}(\a)\big) = \prod_{1 \le \ell \le n} 
\frac{\alpha_{(\omega_\ell, b_\ell)}}{\alpha_{\omega_\ell}} \le \left( \frac{\max\{ \alpha_{(s,b)}:b \in \mathcal B_s \}}{\alpha_s} \right)^{\tau_s(\omega,n)}.\]
Since $\omega \in \Omega_\mathcal T(\alpha)$, we have by Theorem~\ref{Spectrum} that $\lim_{n \to+\infty} \tau_s(\omega,n)=+\infty$. Hence, we find that for each $\varepsilon >0$ there is a $K \in \mathbb N$ such that for all $n \ge K$ and for all $b_1 \cdots b_n \in \mathcal B_\omega^n$,
\begin{equation}\label{q:measureofpoints} \mu^*_{\omega,\alpha} \big( \langle b_1 \cdots b_n\rangle_\omega \cap F_{\cT,\w}(\a)\big) < \varepsilon.
\end{equation}
The proof that $\mu_{\omega, \alpha}^*$ is a pre-measure that can be extended to a probability measure $\mu_{\omega, \alpha}$ now follows from the Carath\'eodory extension theorem using standard arguments. By setting $$\mu_{\w,\a}(B)=\mu_{\w,\a}\big(B\cap F_{\cT,\w}(\a)\big),\q\forall B\in\cB([0,1]),$$
we can then further extend $\mu_{\w,\a}$ to the measurable space $\big([0,1],\cB([0,1])\big)$.
\end{proof}

Lemma~\ref{l:muw} holds for any finite family of GLS IFSs. We can therefore apply it to obtain a measure $\mu_{\omega,\alpha}^{(m)}$ satisfying 
\begin{equation}\label{q:muwmFFI}
        \mu_{\w,\a}^{(m)}\big(\langle b_1\dotsb b_n\rangle_{\w,m}\big)
            =\prod_{1\leq\ell\leq n}\frac{\a_{(\w_\ell,b_\ell)}^{(m)}}{\a_{\w_\ell}},\hspace{1cm}\forall b_1\dotsb b_n\in\cB_{\w,m}^n
    \end{equation}
for any $\mathcal T^{(m)}$, $m \in \mathbb N$. The next lemma relates the measures $\mu_{\w,\a}$ and $\mu_{\w,\a}^{(m)}$, $m\in\N$.

\begin{lem}\label{LimMeasure}
    For every $B\in\cB([0,1])$, $$\mu_{\w,\a}(B)=\lim\limits_{m\to+\infty}\mu_{\w,\a}^{(m)}(B).$$
\end{lem}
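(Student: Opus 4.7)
My plan is to run the standard extension-from-a-generating-algebra argument, starting from the $\pi$-system $\cF$ of fibre fundamental intervals. The first and crucial observation is that for any $C = \langle b_1 \dotsb b_n\rangle_\w \in \cF$, setting $M_C \defeq \max_{1 \le \ell \le n} b_\ell$, every $m \ge M_C$ has the property that each digit $b_\ell$ already lies in $\cB_{\w_\ell}^{(m)}$; by the construction of $\cT^{(m)}$ in Section~\ref{FiniteApproxSec} this forces $f_{\w_\ell,b_\ell}^{(m)} = f_{\w_\ell,b_\ell}$ for every $\ell$, so $\langle b_1\dotsb b_n\rangle_{\w,m} = \langle b_1\dotsb b_n\rangle_\w$ as subsets of $[0,1]$, and \eqref{q:alpham} gives $\a^{(m)}_{(\w_\ell,b_\ell)} = \a_{(\w_\ell,b_\ell)}$. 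Comparing \eqref{q:muwFFI} with \eqref{q:muwmFFI} then yields the exact equality $\mu^{(m)}_{\w,\a}(C) = \mu_{\w,\a}(C)$ for all such $m$, which is considerably stronger than the mere convergence required on individual FFIs.

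I would then upgrade this to the algebra generated by $\cF$ by finite additivity: any finite union of FFIs can be refined into a finite disjoint union of FFIs at a common level, and on such refinements the equality $\mu^{(m)}_{\w,\a}(A) = \mu_{\w,\a}(A)$ holds eventually in $m$.

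For the extension to an arbitrary Borel set $B \in \cB([0,1])$, I would exploit that both $\mu_{\w,\a}$ and every $\mu_{\w,\a}^{(m)}$ are regular Borel probability measures on the compact space $[0,1]$, together with the fact that every open subset of $[0,1]$ is, up to a countable (hence null) boundary set, a countable disjoint union of FFI interiors. Given $\e>0$, outer regularity of $\mu_{\w,\a}$ produces a countable union of FFIs $A \supseteq B$ with $\mu_{\w,\a}(A \setminus B) < \e$; truncating $A$ to a finite sub-union $A_0$ with $\mu_{\w,\a}(A \setminus A_0) < \e$ and applying the algebra step to $A_0$ yields the lower estimate $\liminf_{m \to +\infty} \mu^{(m)}_{\w,\a}(B) \ge \mu_{\w,\a}(B) - O(\e)$, while a symmetric argument applied to $[0,1] \setminus B$ supplies the matching upper estimate.

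The main obstacle is this last step, since setwise convergence of probability measures does not automatically transfer from a generating algebra to the full Borel $\sigma$-algebra: one must bound the tail $\mu^{(m)}_{\w,\a}(A \setminus A_0)$ uniformly in $m$, not merely as a limit. I expect this to be handled by exploiting the explicit Bernoulli product structure of the measures $\mu^{(m)}_{\w,\a}$ and $\mu_{\w,\a}$, so that the mass $\a^{(m)}_{(s,c_i)}$ that each approximating cell receives faithfully reproduces the corresponding tail sum from $\mu_{\w,\a}$, allowing the approximation level and the parameter $m$ to be chosen jointly so that the residual error is simultaneously small for both measures.
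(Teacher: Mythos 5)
Your first step is, in fact, the paper's entire argument: for a fixed FFI $\langle b_1\dotsb b_n\rangle_\w$ and all $m$ exceeding every $b_\ell$, the maps and the weights of $\cT^{(m)}$ coincide with those of $\cT$ on the relevant digits, so $\mu^{(m)}_{\w,\a}$ and $\mu_{\w,\a}$ agree exactly on that interval; the paper then simply asserts that, because $\cF\cup\{\emptyset\}$ generates $\cB([0,1])$, this suffices. Your instinct that this reduction requires justification is correct, and your second step (finite unions, refined to a common level) is unproblematic. The trouble is that your third step is not a proof but a hope, and it sits precisely at the crux of the lemma.

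Concretely, your sandwich needs $\mu^{(m)}_{\w,\a}(A\setminus A_0)$ (and the analogous tail for the complement) to be small \emph{uniformly in} $m$, where $A\setminus A_0$ is a countable union of FFIs each of which, once $m$ is large, involves some digit $b_\ell>m$. On such intervals the two measures are not comparable: $\mu^{(m)}_{\w,\a}$ spreads the lumped mass $\a^{(m)}_{(s,c_i)}$ over the gap interval according to the approximating system $\cT^{(m)}$, not according to $\cT$, so no domination of the form $\mu^{(m)}_{\w,\a}(C)\le K\,\mu_{\w,\a}(C)$ is available for FFIs $C$ lying inside a lumped cell, and an appeal to ``the Bernoulli product structure'' does not by itself produce the required uniform tail bound. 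Nor is this a removable technicality: eventual equality on every member of the generating family does not imply setwise convergence on all of $\cB([0,1])$, because the class of sets on which the limit holds is closed under complements but not under countable disjoint unions without exactly the uniform tail control you are missing. Indeed, for Borel sets determined by the original digits occupying lumped positions --- such as the level set $F_{\cT,\w}(\a)$ itself, to which the lemma is later applied --- a $\mu^{(m)}_{\w,\a}$-typical point carries recoded digit statistics (a run of lumped symbols is re-expanded by $\cT$ inside the gap, producing original digits whose frequencies are a renewal-type transform of $\a^{(m)}$), and these generically differ from $\a$; so the convergence for arbitrary Borel sets cannot follow by soft arguments from the FFI computation. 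As written, your proposal establishes the statement on (finite unions of) FFIs --- which is exactly what the paper's own proof establishes --- while the passage to general $B\in\cB([0,1])$ remains a genuine gap that the plan, as outlined, does not close.
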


\begin{proof}
    As $\cF\cup\{\emptyset\}$ generates $\cB([0,1])$ and $\mu_{\w,\a}(\emptyset)=\mu_{\w,\a}^{(m)}(\emptyset)=0$ for all $m\in\N$, it suffices to show for each $b_1\dotsb b_n\in\cB_\w^n$ that there is an $m_0\in\N$ such that
    $$\mu_{\w,\a}^{(m)}\big(\langle b_1\dotsb b_n\rangle_\w\big)=\mu_{\w,\a}\big(\langle b_1\dotsb b_n\rangle_\w\big)$$
    for all $m\geq m_0$.
    So, fix $n\in\N$ and $b_1\dotsb b_n\in\cB_\w^n$.
    Then $m_0\defeq\max\{b_\ell:1\leq\ell\leq n\}+1$ is such that $(\w_\ell,b_\ell)\in\cD_m\subset\cD^{(m)}$ for all $1\leq\ell\leq n$ and all $m\geq m_0$ so, by \eqref{q:muwFFI}, \eqref{q:muwmFFI} and the definition of $\a^{(m)}$, we have
    \begin{align*}
        \mu_{\w,\a}^{(m)}\big(\langle b_1\dotsb b_n\rangle_\w\big)
            &=\mu_{\w,\a}^{(m)}\big(\langle b_1\dotsb b_n\rangle_{\w,m}\big)=\prod_{1\leq\ell\leq n}\frac{\a_{(\w_\ell,b_\ell)}^{(m)}}{\a_{\w_\ell}}=\prod_{1\leq\ell\leq n}\frac{\a_{(\w_\ell,b_\ell)}}{\a_{\w_\ell}}
                =\mu_{\w,\a}\big(\langle b_1\dotsb b_n\rangle_\w\big)
    \end{align*}
    for all $m\geq m_0$.
\end{proof}

To prove that $\dim_HF_{\cT,\w}(\a)\geq\b_\cT(\a)$ for all $\w\in\W_\cT(\a)$, we use the systems  $(\cT^{(m)},\w)$, $m \in \mathbb N$, and we begin with the case when $\sum_{d\in\cD}\a_d\log N_d<+\infty$.

\begin{prop}\label{p:dimHLowerBeta}
	If $\sum_{d\in\cD}\a_d\log N_d<+\infty$, then $\dim_HF_{\cT,\w}(\a)\geq\b_\cT(\a)$ for all $\w\in\W_\cT(\a)$.
\end{prop}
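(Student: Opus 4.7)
The strategy is to use the measure $\mu_{\w,\a}$ from Lemma~\ref{l:muw} and show that its lower pointwise dimension satisfies $\underline{d}_{\mu_{\w,\a}}(x)\geq\b_\cT(\a)$ for $\mu_{\w,\a}$-a.e.~$x$, so that Lemma~\ref{l:Pes} gives $\dim_HF_{\cT,\w}(\a)\geq\dim_H\mu_{\w,\a}\geq\b_\cT(\a)$. We may assume $\b_\cT(\a)<+\infty$, since otherwise the asserted inequality would force $\dim_HF_{\cT,\w}(\a)=+\infty$, which is impossible. This, together with the hypothesis $\sum_d\a_d\log N_d<+\infty$, forces $H_s\defeq-\sum_b\frac{\a_{(s,b)}}{\a_s}\log\frac{\a_{(s,b)}}{\a_s}<+\infty$ for every $s\in\cS$. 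The degenerate case in which Lemma~\ref{l:muw} does not apply (each $s$ has a single active digit) is handled separately: there $F_{\cT,\w}(\a)$ is countable while $\b_\cT(\a)=0$, so the statement is trivial.

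Under $\mu_{\w,\a}$ the coordinate digits $b_\ell$ are independent with $b_\ell\sim(\a_{(\w_\ell,\cdot)}/\a_{\w_\ell})$. Because $\cS$ is finite, partitioning the indices by $\{\ell:\w_\ell=s\}$ gives, for each $s\in\cS$, an i.i.d.\ subsequence with distribution $(\a_{(s,\cdot)}/\a_s)$. Applying Kolmogorov's i.i.d.\ strong law along each subsequence (with first moments $\sum_b\frac{\a_{(s,b)}}{\a_s}\log N_{s,b}$ and $H_s$ both finite) and reassembling via $\tau_s(\w,n)/n\to\a_s$ yields, $\mu_{\w,\a}$-a.s.,
\begin{align*}
    \tfrac1n\sum_{\ell=1}^n\log N_{\w_\ell,b_\ell}\longrightarrow\sum_{d\in\cD}\a_d\log N_d,\q
    \tfrac1n\sum_{\ell=1}^n\left(-\log\tfrac{\a_{(\w_\ell,b_\ell)}}{\a_{\w_\ell}}\right)\longrightarrow\sum_{s\in\cS}\a_sH_s.
\end{align*}
Since $-\log\mu_{\w,\a}(\langle b_1\dotsb b_n\rangle_\w)$ and $-\log|\langle b_1\dotsb b_n\rangle_\w|$ are exactly these two sums, and the hypothesis $\sum_d\a_d\log N_d<+\infty$ renders the $\liminf$ defining $\b_\cT(\a)$ an honest limit, one obtains
\[\lim_{n\to+\infty}\frac{-\log\mu_{\w,\a}(\langle b_1\dotsb b_n\rangle_\w)}{-\log|\langle b_1\dotsb b_n\rangle_\w|}=\b_\cT(\a),\q\mu_{\w,\a}\text{-a.e.}\]

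The remaining task is to upgrade this FFI-pointwise estimate to the genuine ball-pointwise dimension, which I would do by adapting the Billingsley-style argument of Lemma~\ref{Billingsley}. In view of \cite[p36]{Caj}, it suffices to verify the slow-variation property $\log|\langle b_1\dotsb b_{n+1}\rangle_\w^\circ|/\log|\langle b_1\dotsb b_n\rangle_\w^\circ|\to 1$ a.s. With the denominator already known to grow linearly in $n$, this reduces to $\log N_{\w_{n+1},b_{n+1}}=o(n)$ a.s. The latter follows from the standard Borel--Cantelli consequence that i.i.d.\ random variables $X_k$ with $\E|X_1|<+\infty$ satisfy $X_k/k\to 0$ a.s.\ (using $\sum_k\P[|X_1|>k\e]\leq\E|X_1|/\e$), applied along each of the finitely many subsequences $\{\log N_{s,b_\ell}:\w_\ell=s\}$. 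The main technical obstacle is precisely this FFI-to-ball passage: because $\mathcal T_s$ is infinite for $s\in\cS_\N$, an FFI need not be comparable to any Euclidean ball, and the Billingsley-style reduction (together with the above individual-decay estimate) is what bypasses the issue. The approximation systems $\cT^{(m)}$ of Section~\ref{FiniteApproxSec} together with Lemma~\ref{LimMeasure} provide an alternative route by transferring the known finite-case dimension formula to $\mu_{\w,\a}^{(m)}$ and letting $m\to+\infty$; this however would require a non-trivial semicontinuity property of $\dim_H$ under setwise convergence of measures, which is why I would prefer the direct pointwise-dimension argument sketched above.
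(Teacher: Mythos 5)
Your argument is essentially sound, but it follows a genuinely different route from the paper. The paper never invokes the strong law of large numbers nor the Wegmann--Cajar net criterion for this bound: instead it approximates $\cT$ by the finite GLS systems $\cT^{(m)}$ of Section~\ref{FiniteApproxSec}, uses Lemma~\ref{LimMeasure} to get $\mu_{\w,\a}^{(m)}\big(F_{\cT,\w}(\a)\big)>1-\d$ for large $m$, and then bounds the lower pointwise dimension of $\mu_{\w,\a}^{(m)}$ on its own level set $F_{\cT^{(m)},\w}(\a^{(m)})\cap X_\w^{(m)}$, where the digit frequencies converge \emph{deterministically} and where the FFI-to-ball comparison is the standard finite-alphabet argument (\cite[Theorem~15.3]{Pes97}); the price is the bookkeeping with $E(\d)$ and $e_m$, and it is exactly here that $\sum_{d\in\cD}\a_d\log N_d<+\infty$ is used to force $e_m\to1$. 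You instead work directly with $\mu_{\w,\a}$ on the infinite system, obtain the a.s.\ linear growth of $-\log\mu_{\w,\a}(\langle b_1\dotsb b_n\rangle_\w)$ and $-\log|\langle b_1\dotsb b_n\rangle_\w|$ by the SLLN along the finitely many subsequences $\{\ell:\w_\ell=s\}$ (reassembled via Theorem~\ref{Spectrum}), and handle the net-to-ball passage by an almost-everywhere version of Lemma~\ref{Billingsley}: restricting to a full-measure subset of $X_\w$ on which the Wegmann condition \eqref{q:Wegmann} holds (your Borel--Cantelli estimate $\log N_{\w_{n+1},b_{n+1}}=o(n)$ a.s.\ is correct, and atomlessness of $\mu_{\w,\a}$, as in \eqref{q:measureofpoints}, puts a.e.\ point in $X_\w$). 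This is shorter and avoids the approximation machinery, at the cost of having to restate the Billingsley/Cajar comparison for a measure-theoretically generic subset rather than for the explicitly constructed set $E_a$; both uses of the hypothesis are legitimate, since your SLLN needs exactly the finite first moments that $\sum_d\a_d\log N_d<+\infty$ provides.

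Two slips, both repairable. First, your reduction to $\b_\cT(\a)<+\infty$ is circular as stated (you may not assume the proposition to bound $\b_\cT(\a)$); but it is unnecessary: since $\sum_{b\in\cB_s}N_{s,b}^{-1}=1$, Gibbs' inequality gives $H_s\leq\sum_{b\in\cB_s}\frac{\a_{(s,b)}}{\a_s}\log N_{s,b}\leq\frac1{\a_s}\sum_{d\in\cD}\a_d\log N_d<+\infty$ directly from the hypothesis, which also shows that the $\liminf$ defining $\b_\cT(\a)$ is a finite limit equal to your SLLN ratio. Second, in the degenerate case where each $s$ has a single active digit, $F_{\cT,\w}(\a)$ is in general \emph{not} countable (digits may deviate along zero-density sets of indices); the case is trivial simply because $\b_\cT(\a)=0$, which is how the paper argues it.
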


\begin{proof}
    Fix $\w\in\W_\cT(\a)$.
    We first prove a trivial case.
    If there is no $s\in\cS$ for which $0<\a_{(s,b)}<\a_s$ for some $b\in\cB_s$, then it follows for each $s\in\cS$ that there is a $c_s\in\cB_s$ such that $\a_{(s,c_s)}=\a_s$ and $\a_{(s,b)}=0$ for all $b\in\cB_s\setminus\{c_s\}$.
    Thus, we have $$\sum_{s\in\cS}\a_s\log\a_s=\sum_{s\in\cS}\a_{(s,c_s)}\log\a_{(s,c_s)}=\sum_{d\in\cD}\a_d\log\a_d,$$ using the convention that $0\log0\defeq0$, which implies that
    $$\b_\cT(\a)=\liminf_{m\to+\infty}\frac{\sum_{s\in\cS}\a_s\log\a_s-\sum_{d\in\cD}\a_d\log\a_d}{\sum_{d\in\cD_m}\a_d\log N_d}=0\leq\dim_HF_{\cT,\w}(\a).$$
    Therefore, suppose that there is an $s\in\cS$ such that $0<\a_{(s,b)}<\a_s$ for some $\tilde b\in\cB_s$. %This implies that there is an $M\in\N$ such that, for each $m\geq M$, there is a $b\in\cB_s^{(m)}$ with $0<\a_{(s,b)}^{(m)}<\a_s$.
    This implies $0<\a_{(s,\tilde b)}^{(m)} = \a_{(s,\tilde b)}<\a_s$ for all $m > \tilde b$. Let $\mu_{\w,\a}$ be the measure defined in Lemma~\ref{l:muw}, and, for each $m > \tilde b$, let $\mu_{\w,\a}^{(m)}$ be the measure defined in \eqref{q:muwmFFI}. Fix $\d>0$.
    Since $\mu_{\w,\a}\big(F_{\cT,\w}(\a)\big)=1$, Lemma~\ref{LimMeasure} implies that there is an $M(\d) > \tilde b$ such that $\mu_{\w,\a}^{(m)}\big(F_{\cT,\w}(\a)\big)>1-\d$ for all $m\geq M(\d)$.
    Fix $m\geq M(\d)$.
    Then
    \begin{align}\label{q:LowerBoundBeta1}
        \dim_HF_{\cT,\w}(\a)
            %&\geq\inf\{\dim_HA:A\subset[0,1],\,\mu_{\w,\a}(A)=1\}\nonumber\\
            &\geq\inf\left\{\dim_HA:A\in \mathcal B([0,1]),\,\mu_{\w,\a}^{(m)}(A)>1-\d\right\}\geq\dim_H\mu_{\w,\a}^{(m)}-E(\d),
    \end{align}
    where
    \[ E(\d)\defeq\sup_{m'\geq M(\d)}\,\left|\dim_H\mu_{\w,\a}^{(m')}-\inf\bigl\{\dim_HA:A\in \mathcal B([0,1]),\,\mu_{\w,\a}^{(m')}(A)>1-\d\bigr\}\right|.\]
From \eqref{q:dimHmu} we see for any $m' \ge M(\delta)$ that
\[ \dim_H\mu_{\omega, \alpha}^{(m')}=\lim_{\e\to0}\inf\{\dim_HA:A\in \mathcal B([0,1]),\,\mu_{\omega,\alpha}^{(m')}(A)>1-\e\}.\]
Then
\[\lim_{\e\to0}\,\left|\dim_H\mu_{\w,\a}^{(m')}-\inf\bigl\{\dim_HA:A\in \mathcal B([0,1]),\,\mu_{\w,\a}^{(m')}(A)>1-\e\bigr\}\right|=0\]
so, since $M(\d)$ increases to $+\infty$ as $\d$ decreases to $0$, we see that $E(\d)$ is monotone decreasing to $0$ as $\d\to0$.

\medskip
From \eqref{q:measureofpoints} and \eqref{q:muwmFFI}, together with the fact that $[0,1]\setminus X_\omega^{(m)}$ is a countable set, it follows that $\mu_{\w,\a}^{(m)} \big(F_{\cT^{(m)},\, \w}(\a^{(m)}) \cap X_\omega^{(m)}\big)=1$. Recall the definition of $f_{s,b}^{(m)}$ and set $N_{s,b}^{(m)} = N_{s,b}$ for $b \le m$ and $N_{s,c_i}^{(m)}= (r_i-\ell_i)^{-1}$ for $1 \le i \le R$. By a standard argument in dimension theory (see e.g.\ \cite[Theorem~15.3]{Pes97}), replacing the balls in the definition of the (lower) pointwise dimension of $\mu_{\w,\a}^{(m)}$ at $x$ with FFIs of $(\cT^{(m)},\w)$ yields a lower bound for $x \in F_{\cT^{(m)},\, \w}(\a^{(m)}) \cap X_\omega^{(m)}$:
    \begin{align*}
        \underline{d}_{\mu_{\w,\a}^{(m)}}(x)
            &\defeq\liminf_{r\to0}\frac{\log\mu_{\w,\a}^{(m)}\big(B(x,r)\big)}{\log r}
                \geq\liminf_{n\to+\infty}\frac{\log\mu_{\w,\a}^{(m)}\big(\big\langle b_1^{(m)}\dotsb b_n^{(m)}\big\rangle_{\w,m}\big)}
                    {\log\bigl|\big\langle b_1^{(m)}\dotsb b_n^{(m)}\big\rangle_{\w,m}\bigr|}\\
            &\,=\liminf_{n\to+\infty}\frac{\frac1n\sum_{1\leq\ell\leq n}\log(\a_{(\w_\ell,b_\ell^{(m)})}^{(m)}/\a_{\w_\ell})}
                {-\frac1n\sum_{1\leq\ell\leq n}\log N_{\w_\ell,b_\ell^{(m)}}^{(m)}},
    \end{align*}
    where $(b_n^{(m)})_{n \in \mathbb N} = \pi_{\omega,m}^{-1}(x)$. Writing $\tau_d(\w,b_1^{(m)}\dotsb b_n^{(m)} )\defeq\#\{1\leq\ell\leq n:(\w_\ell,b_\ell^{(m)})=d\}$ as before, we may collect like terms to see that for such $x$
    \begin{align*}
        \lim_{n\to+\infty}\frac1n\sum_{1\leq\ell\leq n}\log\frac{\a_{(\w_\ell,b_\ell^{(m)})}^{(m)}}{\a_{\w_\ell}}  &
        =\lim_{n\to+\infty}\left(\sum_{d\in\cD^{(m)}}\frac{\tau_d\big(\w,b_1^{(m)}\dotsb b_n^{(m)}\big)}n\log\a_d^{(m)}
                -\sum_{s\in\cS}\frac{\tau_s(\w,n)}n\log\a_s\right)\\
            &=\sum_{d\in\cD^{(m)}}\a_d^{(m)}\log\a_d^{(m)}-\sum_{s\in\cS}\a_s\log\a_s
    \end{align*}
    and
    \begin{align*}
        \lim_{n\to+\infty}\frac1n\sum_{1\leq\ell\leq n}\log N_{\w_\ell,b_\ell}^{(m)}
            &=\lim_{n\to+\infty}\sum_{d\in\cD^{(m)}}\frac{\tau_d\big(\w,b_1^{(m)}\dotsb b_n^{(m)}\big)}n\log N_d^{(m)}\\
        &=\sum_{d\in\cD^{(m)}}\a_d^{(m)}\log N_d^{(m)} >0.
    \end{align*}
Since the sets $\cD^{(m)}$ and $\cS$ are finite, we therefore have for $\mu_{\w,\a}^{(m)}$-a.e.\ $x\in[0,1]$ that
    \begin{align*}
        \underline{d}_{\mu_{\w,\a}^{(m)}}(x)
            &\geq\frac{\sum_{s\in\cS}\a_s\log\a_s-\sum_{d\in\cD^{(m)}}\a_d^{(m)}\log\a_d^{(m)}}{\sum_{d\in\cD^{(m)}}\a_d^{(m)}\log N_d^{(m)}}\\
            &\geq\frac{\sum_{s\in\cS}\a_s\log\a_s-\sum_{d\in\cD_m}\a_d\log\a_d}{\sum_{d\in\cD_m}\a_d\log N_d+\sum_{d\in\cD^{(m)}\setminus\cD_m}\a_d^{(m)}\log N_d^{(m)}}\\
            &=e_m\frac{\sum_{s\in\cS}\a_s\log\a_s-\sum_{d\in\cD_m}\a_d\log\a_d}{\sum_{d\in\cD_m}\a_d\log N_d},
            %&=\frac{\sum_{s\in\cS}\a_s\log\a_s-\sum_{d\in\cD}\a_d\log\a_d}{\sum_{d\in\cD}\a_d\log N_d},\\
    \end{align*}
    where
    \begin{align*}
        e_m\defeq\frac{\sum_{d\in\cD_m}\a_d\log N_d}{\sum_{d\in\cD_m}\a_d\log N_d+\sum_{d\in\cD^{(m)}\setminus\cD_m}\a_d^{(m)}\log N_d^{(m)}}.
    \end{align*}
Putting this together with \eqref{q:LowerBoundBeta1} and Lemma~\ref{l:Pes} yields
    \begin{align*}
        \dim_HF_{\cT,\w}(\a)\geq e_m\frac{\sum_{s\in\cS}\a_s\log\a_s-\sum_{d\in\cD_m}\a_d\log\a_d}{\sum_{d\in\cD_m}\a_d\log N_d}-E(\d)
    \end{align*}
    for all $m\geq M(\d)$.
    Now, for any $m \ge M(\delta)$, observe that $N_{s,c}\geq N_{s,b}^{(m)}$ for any $(s,b)\in\cD^{(m)}$ and $(s,c)\in\cD$ such that $f_{s,c}([0,1])\subset f_{s,b}^{(m)}([0,1])$ so the definition of $\a^{(m)}$ yields
    \begin{align*}
        0\leq\sum_{d\in\cD^{(m)}\setminus\cD_m}\a_d^{(m)}\log N_d^{(m)}
            &=\sum_{s\in\cS}\sum_{\substack{b\in\cB_s^{(m)}\\ b>m}}\sum_{\substack{c\in\cB_s\\ f_{s,c}([0,1])\subset f_{s,b}^{(m)}([0,1])}}\a_{(s,c)}\log N_{(s,b)}^{(m)}\\
            &\leq\sum_{s\in\cS}\sum_{\substack{b\in\cB_s^{(m)}\\ b>m}}\sum_{\substack{c\in\cB_s\\ f_{s,c}([0,1])\subset f_{s,b}^{(m)}([0,1])}}\a_{(s,c)}\log N_{(s,c)}\\
            &=\sum_{d\in\cD\setminus\cD_m}\a_d\log N_d<+\infty
    \end{align*}
    by the assumption.
    Thus, we may take $m\to+\infty$ to see that $\sum_{d\in\cD^{(m)}\setminus\cD_m}\a_d^{(m)}\log N_d^{(m)}\to0$ and so $e_m\to1$ as $m\to+\infty$. Hence, taking $m\to+\infty$ gives
    $$\dim_HF_{\cT,\w}(\a)\geq\liminf_{m\to+\infty}\left(\frac{\sum_{s\in\cS}\a_s\log\a_s-\sum_{d\in\cD_m}\a_d\log\a_d}{\sum_{d\in\cD_m}\a_d\log N_d}-E(\d)\right)
        =\b_\cT(\a)-E(\d).$$
    Since $E(\d)\to0$ as $\d\to0$, we may take $\d\to0$ to obtain the desired lower bound.
\end{proof}

\begin{proof}[Proof of Theorem~\ref{dimH}]
    The upper bound is precisely Proposition~\ref{p:dimHUpper}.\medskip
    
    In the case that $\sum_{d\in\cD}\a_d\log N_d<+\infty$, then the lower bound follows from Proposition~\ref{p:dimHLowerEta} and Proposition~\ref{p:dimHLowerBeta}.
    If $\sum_{d\in\cD}\a_d\log N_d=+\infty$, then we have for all $\w\in\W_\cT(\a)$ that
    \begin{equation}\label{LowerbetaEq}
        \dim_HF_{\cT,\w}(\a)\geq\eta_\cT\geq\b_\cT(\a).
    \end{equation}
    The first inequality follows from Proposition~\ref{p:dimHLowerEta} and the second inequality follows analogously to \cite[Lemma 2.4]{FLMW10}.
    In other words, we have $\max\{\eta_\cT,\b_\cT(\a)\}=\eta_\cT$ so Proposition~\ref{p:dimHLowerEta} gives the desired lower bound in this case.\medskip
    
    The proof of the final statement of Theorem~\ref{dimH} follows from Proposition~\ref{p:dimHUpper} and \eqref{LowerbetaEq}.
\end{proof}

\newcommand{\etalchar}[1]{$^{#1}$}
\def\cprime{$'$}

\end{document}